\newtheorem{theorem}{Theorem}[section]
\newtheorem{lemma}[theorem]{Lemma} 
\newtheorem{proposition}[theorem]{Proposition} 
\newtheorem{conjecture}{Conjecture} 
\newtheorem{corollary}[theorem]{Corollary}
\theoremstyle{definition}
\newtheorem{example}[theorem]{Example}
\newtheorem{remark}[theorem]{Remark}
\newtheorem*{remark*}{Remark}
\numberwithin{equation}{section}
\newcommand{\Z}{\mathbb{Z}}
\newcommand{\R}{\mathbb{R}}
\newcommand{\C}{\mathbb{C}}
\newcommand{\Q}{\mathbb{Q}}
\newcommand{\Thetatwodia}{
\raisebox{-6mm}{
\begin{picture}(20,38)
\put(10,20){\oval(20,28)}
\put(0,25){\line(1,0){20}}
\put(0,15){\line(1,0){20}}
\end{picture}
}}
\newcommand{\Thetathreedia}{
\raisebox{-6mm}{
\begin{picture}(20,38)
\put(10,20){\oval(20,28)}
\put(1,28){\line(1,0){18}}
\put(0,20){\line(1,0){20}}
\put(1,12){\line(1,0){18}}
\end{picture}
}}
\newcommand{\Atwodia}{
\raisebox{-4mm}{
\begin{picture}(20,32)
\put(10,16){\oval(26,16)}
\put(2,24){\line(0,1){10}}
\put(16,24){\line(0,1){10}}
\end{picture}
}}
\newcommand{\Afourdia}{
\raisebox{-4mm}{
\begin{picture}(20,32)
\put(10,16){\oval(26,16)}
\put(2,24){\line(0,1){10}}
\put(12,24){\line(0,1){10}}
\put(7,24){\line(0,1){10}}
\put(17,24){\line(0,1){10}}
\end{picture}
}}
\newcommand{\Asixdia}{
\raisebox{-4mm}{
\begin{picture}(28,32)
\put(14,16){\oval(32,16)}
\put(4,24){\line(0,1){10}}
\put(8,24){\line(0,1){10}}
\put(12,24){\line(0,1){10}}
\put(16,24){\line(0,1){10}}
\put(20,24){\line(0,1){10}}
\put(24,24){\line(0,1){10}}
\end{picture}
}}
\newcommand{\Btwodia}{
\raisebox{-4mm}{
\begin{picture}(20,32)
\put(10,16){\oval(26,16)}
\put(2,24){\line(0,1){10}}
\put(16,24){\line(0,1){10}}
\put(-3,16){\line(1,0){26}}
\end{picture}
}}
\newcommand{\Bfourdia}{
\raisebox{-4mm}{
\begin{picture}(20,32)
\put(10,16){\oval(26,16)}
\put(2,24){\line(0,1){10}}
\put(12,24){\line(0,1){10}}
\put(7,24){\line(0,1){10}}
\put(17,24){\line(0,1){10}}
\put(-3,16){\line(1,0){26}}
\end{picture}
}}
\newcommand{\Ctwodia}{
\raisebox{-4mm}{
\begin{picture}(20,32)
\put(10,16){\oval(26,16)}
\put(2,24){\line(0,1){10}}
\put(16,24){\line(0,1){10}}
\put(-2,19){\line(1,0){24}}
\put(-2,13){\line(1,0){24}}
\end{picture}
}}
\begin{document}

\title[Constraints for chirally cosmetic surgeries]{On constraints for knots to admit chirally cosmetic surgeries and their calculations}

\author{Kazuhiro Ichihara}
\address{Department of Mathematics, College of Humanities and Sciences, Nihon University, 3-25-40 Sakurajosui, Setagaya-ku, Tokyo 156-8550, JAPAN}
\email{ichihara.kazuhiro@nihon-u.ac.jp}
\author{Tetsuya Ito}
\address{Department of Mathematics, Kyoto University, Kyoto 606-8502, JAPAN}
\email{tetitoh@math.kyoto-u.ac.jp}

\author{Toshio Saito}
\address{Department of Mathematics, Joetsu University of Education, 1 Yamayashiki, Joetsu 943-8512, JAPAN}
\email{toshio@juen.ac.jp}

\subjclass[2020]{Primary~57K10,57K18,57K31}

\begin{abstract}
We discuss various constraints for knots in $S^{3}$ to admit chirally cosmetic surgeries, derived from invariants of 3-manifolds, such as, the quantum $SO(3)$-invariant, the rank of the Heegaard Floer homology, and finite type invariants. We apply them to show that a large portion (roughly 75\%) of knots which are neither amphicheiral nor $(2,p)$-torus knots with less than or equal to 10 crossings admits no chirally cosmetic surgeries.
\end{abstract}

\maketitle
\section{Introduction}

For a knot $K$ in a 3-manifold $M$ and a slope $r$, we denote by $M_K(r)$ the $r$-surgery on a knot $K$. When $M=S^{3}$ we identify the set of slopes by $\Q \cup \{\infty = \frac{1}{0}\}$ as usual. In the following, we always treat non-meridional and non-longitudinal slopes so we regard a slope as a non-zero rational number $r=\frac{m}{n}$. Throughout the paper, we use the convention that a slope $\frac{m}{n}$ is always expressed by coprime integers $m,n$ such that $m>0$.

Two slopes are called \emph{inequivalent} if there are no orientation-preserving homeomorphisms of $E(K)$ that send one to the other. When $K$ is not the unknot, then two slopes $r,r' \in \Q$ are inequivalent if and only if $r \neq r'$ \cite{gl}. Two Dehn surgeries $S^{3}_K(r)$ and $S^{3}_K(r')$ on inequivalent slopes $r,r'$ are called \emph{purely cosmetic} (resp. \emph{chirally cosmetic}) if $S^{3}_K(r) \cong S^{3}_{K}(r')$ (resp. $S^{3}_K(r) \cong -S^{3}_{K}(r')$).
Here $-M$ represents the closed oriented 3-manifold $M$ with opposite orientation, and we denote by $M \cong M'$ if there is an orientation-preserving homeomorphism $f: M\rightarrow M'$ between oriented closed 3-manifolds $M$ and $M'$.

The \emph{cosmetic surgery conjecture} asserts that there are no purely cosmetic surgeries. More generally, the same is conjectured for general 3-manifolds; for a knot $K$ in an oriented 3-manifold $M$ and inequivalent slopes $r,r'$, $M_K(r) \not \cong M_K(r')$.  

Recently, for a knot in $S^{3}$, the Heegaard Floer homology provides quite strong constraints for possibilities of purely cosmetic surgeries \cite{os2,nw,ha}, bringing a great progress toward the affirmative answer to the cosmetic surgery conjecture. Combining other constraints of cosmetic surgeries \cite{iw,it1,de}, the cosmetic surgery conjecture has been confirmed for many cases, such as, knots with at most 17 crossings \cite{de}, composite knots \cite{ta2}, cable knots \cite{ta1}, 2-bridge knots \cite{ijms}, pretzel knots \cite{sz}. In fact, the number of purely cosmetic surgeries, even if exists, is finite in the following sense; for a given $b>0$, there are only finitely many knots that admit a purely cosmetic surgery whose braid index is less than or equal to $b$ \cite{it3}.

On the other hand, for chirally cosmetic surgeries, the situation is more complicated since there are two known families of chirally cosmetic surgeries for knots in $S^{3}$.
\begin{enumerate}
\item[(A)] When $K$ is the $(2,r)$-torus knot, then for each $m \in \Z$,
\[ S^{3}_K\left(\frac{2r^{2}(2m+1)}{r(2m+1)+1}\right) \cong -S^{3}_K\left(\frac{2r^{2}(2m+1)}{r(2m+1)-1} \right).\] 
\item[(B)] When $K$ is amphicheiral, $S^{3}_K(r)\cong - S^{3}_K(-r)$ for all $r$.
\end{enumerate}

A naive and optimistic conjecture is;
\begin{conjecture}
\label{conjecture:main}
A chirally cosmetic surgery on non-trivial knots in $S^{3}$ is either (A) or (B).
\end{conjecture}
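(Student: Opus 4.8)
The plan is to attack the conjecture by extracting, from orientation-sensitive invariants of the two surgered manifolds, enough numerical constraints on the pair of slopes to force the situation into one of the two known families. So suppose $K$ is non-trivial and $S^{3}_K(p/q) \cong -S^{3}_K(p/q')$ for inequivalent slopes. First I would record the homological obstructions. Since $H_1(S^{3}_K(p/q)) \cong \Z/p$ is an orientation invariant, the two numerators must agree, so I may write the slopes as $p/q$ and $p/q'$. An orientation-preserving homeomorphism must moreover identify the linking forms, and the linking form of $-S^{3}_K(p/q')$ is the negative of that of $S^{3}_K(p/q')$; this yields a congruence condition relating $q$ and $q'$ modulo $p$ (of the shape that an explicit combination of $q,q'$ is a quadratic residue mod $p$), cutting the candidate pairs $(q,q')$ down to a manageable list for each $p$.

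Next I would apply the Casson--Walker invariant $\lambda$, which satisfies $\lambda(-M) = -\lambda(M)$, together with Walker's surgery formula $\lambda(S^{3}_K(p/q)) = \tfrac{q}{2p}\Delta_K''(1) + \lambda(L(p,q))$, where the lens-space term is a Dedekind-sum expression depending only on the slope. Equating $\lambda(S^{3}_K(p/q))$ with $-\lambda(S^{3}_K(p/q'))$ gives $\tfrac{q+q'}{2p}\Delta_K''(1) = -\bigl(\lambda(L(p,q)) + \lambda(L(p,q'))\bigr)$. Thus either $q' = -q$ --- exactly the shape of the amphicheiral family (B) --- or $a_2(K) = \tfrac12\Delta_K''(1)$ is forced to equal an explicit rational number determined by $p,q,q'$, and since $a_2(K)$ is an integer this integrality already eliminates most remaining pairs.

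I would then push the same idea through higher invariants. The next finite-type invariant of the Ohtsuki series --- equivalently, a coefficient in the asymptotic expansion of the $SO(3)$ quantum invariant $\tau^{SO(3)}_r$, which obeys $\tau^{SO(3)}_r(-M) = \overline{\tau^{SO(3)}_r(M)}$ --- produces a second polynomial constraint, now involving the degree-three Vassiliev invariant $v_3(K)$ alongside $a_2(K)$; here the fact that $v_3$ changes sign under mirroring interacts with the mirror appearing in $-S^{3}_K = S^{3}_{\overline K}$. I would supplement these with Heegaard Floer input: the total rank $\operatorname{rk}\widehat{HF}$ is orientation-insensitive, so $\operatorname{rk}\widehat{HF}(S^{3}_K(p/q)) = \operatorname{rk}\widehat{HF}(S^{3}_K(p/q'))$, while the correction terms obey $d(-M,\mathfrak{s}) = -d(M,\mathfrak{s})$, giving further constraints through the surgery formula for the $V_i(K)$. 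Intersecting all of these conditions should leave only the solutions realized by the $(2,r)$-torus knot family (A) and the amphicheiral family (B), together with at worst finitely many sporadic numerical solutions for each small range of parameters.

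The hard part --- the reason this is a plan for a conjecture and not a proof --- is precisely this final intersection. The finite-type and quantum conditions are finitely many polynomial and congruence relations in $a_2, v_3, p, q, q'$, and there is no a priori reason that satisfying all of them forces a knot-theoretic conclusion: a pair of slopes can in principle meet every numerical obstruction while no available invariant certifies that the two surgeries are \emph{not} orientation-preservingly homeomorphic, since we possess no invariant detecting the full homeomorphism type. Consequently the realistic deliverable is a list of computable necessary conditions that can be checked mechanically against a knot table, confirming the conjecture for knots of small crossing number (and yielding the ``roughly $75\%$'' statement), whereas a proof in full generality would require a genuinely new ingredient seeing more than these low-degree invariants.
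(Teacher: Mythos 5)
The statement you were asked about is a conjecture that the paper itself does not prove (and explicitly leaves open), and your proposal correctly recognizes this while outlining essentially the paper's own program: homological constraints fixing the numerator, Casson--Walker/finite-type constraints tying $\frac{q+q'}{p}$ to $a_2(K)$ and $v_3(K)$ (the content of the cited earlier works of the authors), a quantum $SO(3)$-invariant constraint, and Heegaard Floer rank constraints, all intersected and checked mechanically against knot tables to confirm the conjecture for small crossing number (the roughly $75\%$ figure). Your closing caveat --- that low-degree invariants alone can meet every numerical obstruction without certifying non-existence of a homeomorphism --- is precisely the paper's own position, which is why the remaining knots up to ten crossings are handled there not by these invariants but by Kegel's hyperbolic-geometry verification.
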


This sounds a bit risky because when look at knots in general 3-manifolds, there are more examples of chirally cosmetic surgeries which are different from obvious generalizations of (A) and (B) \cite{bhw,ij}. 

Nevertheless recent researches provide some supporting evidences for the conjecture. In \cite{iis} we discussed several obstructions for chirally cosmetic surgeries and confirmed the conjecture for genus one alternating knots. In \cite{va} the conjecture is proven for alternating odd pretzel knots of genus two or three.
Moreover, in \cite{it2} we showed that a cabled knot, one of a natural candidate admitting chirally cosmetic surgeries other than (A) or (B), does not admit chirally cosmetic surgery as long as the set of the JSJ pieces of the knot exterior does not contain the $(2,r)$-torus knot exterior.

The aim of this paper is to continue discussion for chirally cosmetic surgeries to get further constraints, and demonstrate many knots indeed do not admit chirally cosmetic surgeries.

To study chirally cosmetic surgery, it is useful to separate the following three types;
\begin{description}
\item[$0$-type] $S^3_{K}(r) \cong -S^{3}_K(-r)$.
\item[$+$-type] $S^3_{K}(r) \cong -S^{3}_K(r')$ such that $rr'>0$, 
\item[$-$-type] $S^3_{K}(r) \cong -S^{3}_K(r')$ such that $rr'<0$ and $r+r'\neq 0$.
\end{description}

Then Conjecture \ref{conjecture:main} is divided into the following three conjectures.

\begin{conjecture}
\label{conj:chirally-cosmetic}
Let $K$ be a non-trivial knot in $S^{3}$.
\begin{enumerate}
\item[(i)] $K$ admits a chirally cosmetic surgery of $0$-type if and only if $K$ is amphicheiral.
\item[(ii)] $K$ admits a chirally cosmetic surgery of $+$-type if and only if $K$ is a $(2,p)$-torus knot.
\item[(iii)] $K$ never admits a chirally cosmetic surgery of $-$-type.
\end{enumerate}
\end{conjecture}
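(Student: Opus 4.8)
The three statements are all "only if" directions, since the "if" directions are precisely the families (A) and (B) recalled above. The plan is therefore to attach to a hypothetical chirally cosmetic pair a battery of \emph{orientation-sensitive} $3$-manifold invariants, extract necessary numerical conditions, and push these toward the stated structural classifications, treating the three types in parallel. As a first reduction, since $|H_1(S^{3}_K(m/n))|=m$ is a homeomorphism invariant the numerators must agree, so every chirally cosmetic pair has the form $S^{3}_K(p/q)\cong -S^{3}_K(p/q')$. Next, the linking form of $-M$ is the negative of that of $M$, so comparing linking forms of the two surgered manifolds gives a quadratic congruence relating $q$ and $q'$ modulo $p$ (of the shape $-qq'\equiv\square\pmod p$); this is automatic for $0$-type ($q'=-q$) but already eliminates many slope pairs in the $\pm$-type cases.

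The main engine is the surgery formula for the Casson--Walker invariant $\lambda$, of the form $\lambda(S^{3}_K(p/q))=\frac{q}{p}\,a_2(K)+\lambda(L(p,q))$, where $a_2(K)=\tfrac12\Delta_K''(1)$ is both orientation- and mirror-insensitive. Since $\lambda(-M)=-\lambda(M)$ and $-L(p,q)\cong L(p,-q)$, the hypothesis forces
\[
\frac{q+q'}{p}\,a_2(K)=-\bigl(\lambda(L(p,q))+\lambda(L(p,q'))\bigr).
\]
For $0$-type the left-hand side vanishes, so this identity is vacuous and I would pass to the next finite type invariant $v_3$, which satisfies $v_3(\overline{K})=-v_3(K)$ and hence detects chirality; its surgery formula should yield $v_3(K)=0$ as the first genuine constraint toward amphichirality. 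For $+$-type we have $q+q'\neq 0$ and $qq'>0$, so the displayed identity pins $a_2(K)$ to an explicit value, and matching this against the Alexander/signature data of candidate knots is what should isolate the $(2,p)$-torus knots. For $-$-type ($qq'<0$, $q+q'\neq 0$) the same identity, combined with the sign of $a_2$ and the linking-form congruence, is the starting point for a contradiction.

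Because a single finite type invariant only cuts out a codimension-one condition, I would reinforce these with the remaining orientation-sensitive data: the quantum $SO(3)$ invariant (which satisfies $\tau_r(-M)=\overline{\tau_r(M)}$, so its leading terms negate and contribute further congruences), and Heegaard Floer homology (the rank of $\widehat{HF}$ is orientation-insensitive and bounds the complexity of the surgered manifold, while the correction terms $d$ change sign and impose rigidity in $L$-space situations). The expected main obstacle is exactly that all of these invariants deliver only \emph{necessary} numerical conditions: together they carve out a finite or at least codimension-positive locus of slope pairs and knot invariants, but none of them recognizes ``amphicheiral'' or ``$(2,p)$-torus knot'' directly. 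Thus the genuinely hard step in each part is to upgrade the numerical constraints to the stated structural conclusion --- for (ii) and (iii), showing the combined system has no solutions outside the torus-knot family, and for (i), bridging the gap between $v_3(K)=0$ (and its higher analogues) and actual amphichirality. This is why a clean closed-form proof is unlikely, and the realistic route is to establish the constraints in general and then dispatch the remaining finite list of possibilities by explicit calculation, as carried out in this paper for knots up to $10$ crossings.
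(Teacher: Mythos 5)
The statement you were asked to prove is Conjecture~\ref{conj:chirally-cosmetic} of the paper: it is an \emph{open conjecture}, not a theorem, and the paper itself contains no proof of it --- it only establishes necessary conditions and then verifies non-existence of chirally cosmetic surgeries for a large portion (roughly 75\%) of knots up to 10 crossings, with the remaining knots settled only by Kegel's separate hyperbolic-geometry computation quoted in Remark~\ref{remark:Kegel}. Your proposal reproduces the paper's actual strategy faithfully: equality of $|H_1|$ forcing equal numerators, the Casson--Walker surgery formula pinning $\frac{q+q'}{p}a_2(K)$ (vacuous for $0$-type, whence the passage to $v_3$ and $v_5$, exactly as in Theorems~\ref{theorem:v3-v5-0-type} and \ref{theorem:deg-two-criterion}), the quantum $SO(3)$ invariant (Theorem~\ref{theorem:quantum-SO3-obstruction}), and the rank of $\widehat{HF}$ (Theorems~\ref{theorem:nu}, \ref{theorem:weak}, \ref{theorem:criteria-thin}). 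The gap you name at the end --- that all of these are codimension-positive numerical constraints which cannot recognize ``amphicheiral'' or ``$(2,p)$-torus knot'' structurally, so the argument can only dispatch finite families by explicit calculation --- is not a defect of your write-up relative to the paper; it is precisely the paper's own situation, and precisely why the statement is still a conjecture. In short: your approach matches the paper's, and neither constitutes a proof; be aware that what you were handed is an open problem, and any submission claiming to ``prove'' it by this route would be overclaiming.
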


In the following, we discuss various constraints for knots to admit chirally cosmetic surgery of the above three types, by using the following invariants;

\begin{itemize}
\item The coefficient $a_{2i}(K)$ of $z^{2i}$ in the Conway polynomial $\nabla_K(z)$ of $K$.
\item $v_3(K)=-\frac{1}{144}V'''_K(1)-\frac{1}{48}V''_K(1) \in \frac{1}{4}\Z$. This is the primitive, canonical degree 3 finite type invariant, normalized so that $v_3(K)=\frac{1}{4}$ for a right-handed trefoil $K$. 
\item A certain canonical degree 5 finite type invariant $v_5(K)$. As we will discuss in Appendix, we can compute $v_5(K)$ from the Kauffman polynomial of $K$ (Theorem \ref{theorem:v5-formula}).
\item the nu-invariant $\nu(K)$ and the tau-invariant $\tau(K)$ of a knot $K$ derived from the Heegaard Floer homology.
\item the determinant $\det(K)$.
\end{itemize}
Also, in the following we denote by $\overline{K}$ the mirror image of $K$.

\subsection{Constraints for chirally cosmetic surgery of $0$-type}

First we discuss constraints for a knot to admit a chirally cosmetic surgery of $0$-type. In \cite{it1} we showed the following.

\begin{theorem}\cite[Corollary 1.3 (ii), Corollary 1.5 (ii)]{it1}
\label{theorem:v3-v5-0-type}
Let $K$ be a knot. If $K$ admits a chirally cosmetic surgery of $0$-type, then $v_3(K)=v_5(K)=0$.
\end{theorem}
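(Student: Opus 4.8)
The plan is to reduce the defining relation of a $0$-type surgery to a comparison between Dehn surgeries on $K$ and on its mirror $\overline{K}$ along the \emph{same} slope, and then to detect the vanishing of $v_3$ and $v_5$ through surgery formulae for finite type invariants of rational homology spheres.

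First I would rewrite the hypothesis using the standard identity $-S^3_K(r) \cong S^3_{\overline{K}}(-r)$, which reflects that an orientation-reversing homeomorphism of $S^3$ carries $K$ to $\overline{K}$ and negates slopes. Applying it with $-r$ in place of $r$, the $0$-type relation $S^3_K(r) \cong -S^3_K(-r)$ becomes
$$S^3_K(r) \cong S^3_{\overline{K}}(r).$$
Thus every orientation-preserving invariant of closed oriented $3$-manifolds agrees on the two sides. I will exploit the fact that $\nabla_{\overline{K}} = \nabla_K$, so that $a_{2i}(\overline{K}) = a_{2i}(K)$, whereas the odd-degree finite type invariants satisfy $v_3(\overline{K}) = -v_3(K)$ and $v_5(\overline{K}) = -v_5(K)$.

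Next I would feed this into a finite type invariant $\lambda$ of rational homology spheres ($\mathbb{Q}$HS) whose Dehn surgery formula reaches knot invariants of degree $3$. In the span of finite type knot invariants of degree at most $3$, the only one that is odd under mirroring is $v_3$; hence the surgery formula for $\lambda(S^3_K(p/q))$ has the shape (mirror-even terms in $a_2$) plus $c_3(p/q)\,v_3(K)$, with $c_3$ an explicit rational function of the slope. Subtracting the corresponding expression for $\overline{K}$ cancels the even terms and leaves $2c_3(r)\,v_3(K) = 0$; since $c_3(r) \neq 0$ for the admissible slope $r$, this forces $v_3(K) = 0$. I would then repeat the argument with an invariant whose surgery formula reaches degree $5$: its mirror-odd part has the form $c_5(r)\,v_5(K) + c_5'(r)\,a_2(K)v_3(K)$, and after substituting the already established $v_3(K) = 0$ the same cancellation between $K$ and $\overline{K}$ yields $c_5(r)\,v_5(K) = 0$, hence $v_5(K) = 0$.

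The crux of the argument, and the step I expect to require the most work, is establishing these two surgery formulae with the stated structure, namely identifying the correct finite type invariants of $\mathbb{Q}$HS (for instance via the LMO invariant, equivalently Ohtsuki's invariants), computing their behaviour under surgery, checking that the mirror-odd contribution is governed precisely by $v_3$ and then $v_5$, and verifying that the slope-dependent coefficients $c_3(r)$ and $c_5(r)$ do not vanish for any non-meridional, non-longitudinal slope.
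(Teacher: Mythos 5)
Your proposal is correct and follows essentially the same route as the paper, which does not reprove this statement but imports it from \cite{it1}: there the degree two and degree three parts $\lambda_2,\lambda_3$ of the LMO invariant of $S^{3}_K(r)$ are written as slope-dependent linear combinations of the canonical finite type invariants $v_2,v_3,v_4,w_4,v_5,v_6$ of $K$, and the $0$-type relation is exploited through exactly the parity considerations you describe (your reformulation $-S^{3}_K(-r)\cong S^{3}_{\overline{K}}(r)$ is equivalent to invoking the orientation-reversal behavior of the LMO invariant there). The crux you correctly flag---establishing the surgery formulae and the nonvanishing of the slope coefficients of $v_3$ and $v_5$---is precisely the computational content of \cite{it1}.
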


Since $v_3(K)=-v_3(\overline{K})$ and $v_5(K)=-v_5(\overline{K})$, this gives a good supporting evidence for conjecture \ref{conj:chirally-cosmetic} (i).

We add a new additional constraint by using the quantum $SO(3)$-invariant. Let $V_K(t)$ be the Jones polynomial of $K$ and let $\tau_5^{SO(3)}$ be the quantum $SO(3)$-invariant at the fifth root of unity $\zeta =\exp(\frac{2\pi \sqrt{-1}}{5})$. 

\begin{theorem}
\label{theorem:quantum-SO3-obstruction}
Let $\zeta = \zeta_5 = \exp(\frac{2\pi \sqrt{-1}}{5})$ be the 5th root of unity.
If $S^{3}_K(\frac{m}{n}) \cong -S^{3}_K(-\frac{m}{n})$ then either 
\begin{enumerate}
\item[(i)] $V_K(\zeta) \in \R$, or,
\item[(ii)] $\tau_5^{SO(3)}(S^{3}_K(\frac{m}{n})) = \tau_5^{SO(3)}(L(m,n))$.
\end{enumerate}
\end{theorem}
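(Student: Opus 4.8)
The plan is to convert the orientation information into an equality of quantum invariants and then exploit the very small number of colors available at the fifth root of unity. First I would rewrite the hypothesis using the standard mirror relation $-S^{3}_K(-r)=S^{3}_{\overline{K}}(r)$. With $r=\frac{m}{n}$ this says that the $0$-type condition $S^{3}_K(\frac{m}{n})\cong -S^{3}_K(-\frac{m}{n})$ is the same as the existence of an orientation-preserving homeomorphism $S^{3}_K(\frac{m}{n})\cong S^{3}_{\overline{K}}(\frac{m}{n})$. Since $\tau_5^{SO(3)}$ is an invariant of oriented closed $3$-manifolds, this forces
\[ \tau_5^{SO(3)}\!\left(S^{3}_K(\tfrac{m}{n})\right)=\tau_5^{SO(3)}\!\left(S^{3}_{\overline{K}}(\tfrac{m}{n})\right), \]
so the whole problem reduces to understanding how the two sides differ.

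Next I would invoke the Reshetikhin--Turaev / $SO(3)$ surgery formula, which expresses $\tau_5^{SO(3)}(S^{3}_K(\frac{m}{n}))$ as a normalized linear combination, over the admissible colors $0\le j\le r-2=3$, of the reduced colored Jones polynomials of $K$ evaluated at $\zeta$, weighted by framing and Gauss-sum coefficients depending only on $\frac{m}{n}$ and $\zeta$, not on $K$. The crucial simplification is that at the fifth root of unity the colors collapse: the symmetry $j\leftrightarrow r-2-j$ identifies color $3$ with the trivial color $0$ and color $2$ with the fundamental color $1$, and the fundamental color contributes exactly the Jones polynomial $V_K(\zeta)$. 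I therefore expect the surgery formula to take the affine shape
\[ \tau_5^{SO(3)}\!\left(S^{3}_K(\tfrac{m}{n})\right)=\tau_5^{SO(3)}(L(m,n))+\kappa_{m,n}\bigl(V_K(\zeta)-1\bigr), \]
where the constant term is forced to be the unknot contribution $\tau_5^{SO(3)}(L(m,n))$ (since $V_{\text{unknot}}(\zeta)=1$) and $\kappa_{m,n}\in\C$ depends only on $m,n,\zeta$. The same formula, with the identical constant term and identical coefficient $\kappa_{m,n}$, holds for $\overline{K}$, as neither quantity depends on the knot.

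Finally I would combine these facts. Because the Jones polynomial has integer coefficients and $V_{\overline{K}}(t)=V_K(t^{-1})$, we have $V_{\overline{K}}(\zeta)=V_K(\overline{\zeta})=\overline{V_K(\zeta)}$. Substituting the two affine expressions into the invariant equality from the first step and cancelling the common lens space term yields
\[ \kappa_{m,n}\bigl(V_K(\zeta)-\overline{V_K(\zeta)}\bigr)=0. \]
Hence either $V_K(\zeta)=\overline{V_K(\zeta)}$, i.e.\ $V_K(\zeta)\in\R$, which is conclusion (i); or $\kappa_{m,n}=0$, in which case the affine formula degenerates to $\tau_5^{SO(3)}(S^{3}_K(\frac{m}{n}))=\tau_5^{SO(3)}(L(m,n))$, which is conclusion (ii). The main obstacle is the second step: one must pin down the exact $SO(3)$ surgery formula for a general non-integral slope $\frac{m}{n}$ at $r=5$, handle the continued-fraction bookkeeping and normalization factors, and verify rigorously that after the color collapse the only knot-dependent quantity surviving is $V_K(\zeta)$ with a knot-independent coefficient, the knot-independent part being precisely $\tau_5^{SO(3)}(L(m,n))$. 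That verification is where the real work lies.
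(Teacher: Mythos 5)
Your proposal is correct, and its core coincides with the paper's: both arguments rest on the surgery formula expressing $\tau_5^{SO(3)}(S^{3}_K(\frac{m}{n}))$ as a knot-independent affine function of the Jones evaluation (Proposition \ref{prop:SO3-linear-sum}, which you rightly identify as the real work: continued-fraction bookkeeping, framing corrections, and the symmetry principle collapsing the colors at $r=5$ to the trivial and fundamental ones), and both end with the same dichotomy: either the knot-dependent coefficient vanishes, giving (ii) by comparison with the unknot, or it does not, giving (i). Where you genuinely differ is in how complex conjugation enters. The paper keeps $K$ fixed and negates the slope, combining $\tau_r^{SO(3)}(-M)=\overline{\tau_r^{SO(3)}(M)}$ with a further symmetry of the coefficients, $c^{i}_r(\frac{m}{n})=\overline{c^{i}_r(-\frac{m}{n})}$, proved by comparing the expansions $[a_0,\ldots,a_\ell]$ and $[-a_0,\ldots,-a_\ell]$ and using $\overline{c_\pm}=c_\mp$ and $\sigma_\pm(L)=\sigma_\mp(L')$. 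You instead keep the slope fixed and mirror the knot, rewriting the hypothesis as $S^{3}_K(\frac{m}{n})\cong S^{3}_{\overline{K}}(\frac{m}{n})$ and pushing the conjugation onto the knot via $V_{\overline{K}}(\zeta)=\overline{V_K(\zeta)}$; this needs only knot-independence of the coefficients, so the paper's conjugation lemma becomes unnecessary for this theorem. That makes your route slightly more economical here; the coefficient symmetry is what powers the paper's general statement for every odd $r$ (Theorem \ref{theorem:quantum-SO3-obstruction-general}), though your mirror trick would generalize there too, using $Q^{\mathfrak{sl}_2;V_i}(\overline{K}_0)|_{q=\zeta_r}=\overline{Q^{\mathfrak{sl}_2;V_i}(K_0)|_{q=\zeta_r}}$. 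One cosmetic point: with the paper's conventions the fundamental color contributes $[2]V_K(\zeta^{-1})$ rather than $V_K(\zeta)$, but since $V_K$ has integer coefficients this merely conjugates the evaluation and affects neither conclusion.
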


Actually, we will give a more general result in Theorem \ref{theorem:quantum-SO3-obstruction-general}. Since $V_{\overline{K}}(t)=V_K(t^{-1})$, if $K$ is amphicheiral then $V_K(\zeta) \in \R$. Thus Theorem \ref{theorem:quantum-SO3-obstruction} provides additional supporting evidence for conjecture \ref{conj:chirally-cosmetic}(i).

Theorem \ref{theorem:quantum-SO3-obstruction} is inspired by a work of Detcherry \cite{de}, where he gave a constraint of purely cosmetic surgery by using the quantum $SO(3)$-invariant. Our results can be seen as a chirally cosmetic surgery analogue of Detcherry's result.

\subsection{Constraints for chirally cosmetic surgery of $\pm$-type}

Next we discuss constraints for a knot to admit a chirally cosmetic surgery of $\pm$-type.

In \cite{it1} we have seen the following constraints by looking at the finite type invariant of degree two.

\begin{theorem}\cite[Corollary 1.3 (iii)]{it1}
\label{theorem:deg-two-criterion}
Let $K$ be a knot. If $K$ admits a chirally cosmetic surgery of $\pm$-type; $S^{3}_K(\frac{m}{n}) \cong -S^{3}_K(\frac{m}{n'})$ with $n+n'\neq 0$, then either (i) or (ii) holds.
\begin{itemize}
\item[(i)] $v_3(K)=0$ and $7a_2(K)^2-a_2(K)-10a_4(K)=0$\footnote{In \cite{it1} we did not write the conclusion $7a_2(K)^{2}-a_2(K)-10a_4(K)=0$, but this follows from the proof therein.}.
\item[(ii)] $v_3(K) \neq 0$ and $\frac{m}{n+n'}= \frac{7a_2(K)^{2}-a_2(K)-10a_4(K)}{8v_3(K)}$.
\end{itemize}
\end{theorem}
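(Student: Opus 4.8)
The plan is to derive a single numerical identity that packages all three types of chirally cosmetic surgery at once, and then read off (i) and (ii) by a case distinction on whether $v_3(K)$ vanishes. Concretely, I aim to show that whenever $S^3_K(\frac{m}{n}) \cong -S^3_K(\frac{m}{n'})$ one has
\begin{equation}
\frac{n+n'}{m}\bigl(7a_2(K)^2 - a_2(K) - 10a_4(K)\bigr) = 8\,v_3(K). \tag{$\star$}
\end{equation}
Granting $(\star)$, the theorem is immediate: since the surgery is of $\pm$-type we have $n+n'\neq 0$, so if $v_3(K)=0$ the left-hand side must vanish and hence $7a_2(K)^2-a_2(K)-10a_4(K)=0$, which is case (i); and if $v_3(K)\neq 0$ we may divide to obtain $\frac{m}{n+n'} = \frac{7a_2(K)^2-a_2(K)-10a_4(K)}{8v_3(K)}$, which is case (ii). The same identity specialized to $n+n'=0$ recovers the $0$-type conclusion $v_3(K)=0$ of Theorem \ref{theorem:v3-v5-0-type}.

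To produce $(\star)$ I would use finite type invariants of the surgered rational homology spheres together with their surgery formulas. Recall the Casson–Walker invariant $\lambda$ is a degree one finite type invariant with $\lambda(-M)=-\lambda(M)$, and its surgery formula takes the form $\lambda(S^3_K(\frac{m}{n})) = \frac{n}{m}a_2(K) + \lambda(L(m,n))$. I would pair this with a higher orientation-reversal anti-invariant finite type invariant $I$ (obtained from the LMO invariant, equivalently from the Ohtsuki series of the quantum $SO(3)$ invariant), whose dependence on the knot is governed by the degree $\le 4$ data $a_2(K)^2,\,a_4(K)$ and by $v_3(K)$; schematically
\[ I\bigl(S^3_K(\tfrac{m}{n})\bigr) = \tfrac{n}{m}\,\alpha\bigl(7a_2(K)^2 - 10a_4(K)\bigr) + \beta\, v_3(K) + I(L(m,n)) \]
for universal constants $\alpha,\beta$. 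The parity of each term is forced by compatibility with orientation reversal (mirroring the knot and negating the slope), using $a_{2i}(\overline K)=a_{2i}(K)$ and $v_3(\overline K)=-v_3(K)$: the even knot invariants are carried by the odd function $\frac{n}{m}$, while $v_3$ is carried by a constant.

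Feeding the hypothesis into both invariants gives two scalar equations. Since $I$ is anti-invariant, $S^3_K(\frac{m}{n})\cong -S^3_K(\frac{m}{n'})$ yields $I(S^3_K(\frac{m}{n})) + I(S^3_K(\frac{m}{n'}))=0$, that is
\[ \tfrac{n+n'}{m}\,\alpha\bigl(7a_2^2 - 10a_4\bigr) + 2\beta v_3 + \bigl[I(L(m,n)) + I(L(m,n'))\bigr] = 0, \]
and likewise $\lambda$ gives $\lambda(L(m,n)) + \lambda(L(m,n')) = -\frac{n+n'}{m}a_2$. The crux is to eliminate the lens space contributions. Here I would use that on lens spaces $I$ is an explicit function expressible through $\lambda$ together with a Dedekind-sum remainder; the homeomorphism forces the two surgeries to carry opposite linking forms, and since the linking form of $\frac{m}{n}$-surgery is independent of $K$, this pins down $n,n'\bmod m$ finely enough that the Dedekind-sum remainders cancel by reciprocity, while the $\lambda$-part of $I(L(m,n))+I(L(m,n'))$ is replaced, via the Casson–Walker identity, by a term proportional to $\frac{n+n'}{m}a_2$. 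It is precisely this substitution that converts $7a_2^2-10a_4$ into $7a_2^2-a_2-10a_4$, which accounts for the extra linear term recorded in the footnote. Collecting terms and fixing $\alpha,\beta$ yields $(\star)$.

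The main obstacle is the determination of the invariant $I$ and its surgery formula with the correct universal constants, together with the lens-space bookkeeping in the elimination step: one must verify both that the Dedekind-sum remainders genuinely cancel under the linking-form constraint and that the surviving coefficients assemble into exactly $7a_2^2-a_2-10a_4$ and the factor $8$. The parity discussion and the final case analysis are routine once $(\star)$ is in hand.
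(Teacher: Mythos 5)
Your master identity $(\star)$ is exactly the right target: it is precisely what the cited source [It1, Corollary 1.3] establishes (note that the present paper does not reprove this theorem at all; it imports it from \cite{it1}, where the tool is the rational surgery formula for the degree-two part $\lambda_2$ of the LMO invariant combined with the Casson--Walker invariant $\lambda_1$). Your overall skeleton --- pair $\lambda_1$ with a second finite type invariant of $3$-manifolds, exploit orientation-reversal parity and surgery formulas, and eliminate the lens space contributions --- is the same as in \cite{it1}. The first genuine gap is that your second invariant $I$, whose existence and surgery formula carry the entire content of the theorem, is postulated rather than constructed, and the schematic form you assign to it cannot be realized. A slope-independent term $\beta v_3(K)$ in a surgery formula is impossible: the formula must be consistent with the meridional filling $S^3_K(\frac{1}{0})=S^3$, which gives $I(S^3)=\beta v_3(K)+I(L(1,0))$ and hence $\beta v_3(K)=0$ for every $K$, so $\beta=0$; but with $\beta=0$ the mechanism that produces $8v_3(K)$ in $(\star)$ disappears. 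Moreover, the orientation-reversal-\emph{odd} parts of the LMO invariant are $\lambda_1$ (which sees only $a_2$) and $\lambda_3$ (which involves degree five and six knot data such as $v_5,v_6$, and is what yields the $v_5$-statement in Theorem \ref{theorem:v3-v5-0-type}); neither has the shape you posit. The invariant that actually proves Theorem \ref{theorem:deg-two-criterion} is $\lambda_2$, which is \emph{even} under orientation reversal: there $7a_2(K)^2-a_2(K)-10a_4(K)$ is carried by $\frac{n^2}{m^2}$, $v_3(K)$ is carried by $\frac{n}{m}$, and there is a cross term of the shape $a_2(K)\frac{n}{m}\lambda_1(L(m,n))$. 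If you instead give $v_3$ a legal (non-constant, even, vanishing at the meridian) coefficient such as $\frac{n^2}{m^2}$, the elimination step produces an identity involving $\frac{n^2+n'^2}{m^2}$ rather than the clean $\frac{n+n'}{m}$ of $(\star)$, so the derivation as written breaks.

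The second gap is the lens-space bookkeeping, which you assert ("the Dedekind-sum remainders cancel by reciprocity") but do not prove; this is exactly where the technical work of \cite{it1} lies, and the linking-form constraint alone does not give cancellation in any simple sense. A concrete test case is family (A) for the trefoil: $S^3_{T(2,3)}(\frac{9}{2})\cong -S^3_{T(2,3)}(\frac{9}{1})$, so $(m,n,n')=(9,2,1)$. Here $L(9,2)$ and $-L(9,1)=L(9,8)$ have isomorphic linking forms but are \emph{not} homeomorphic, and the Dedekind sums do not cancel: $s(2,9)+s(8,9)=\frac{4}{27}-\frac{14}{27}=-\frac{10}{27}\neq 0$. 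What actually happens is that the degree-one lens space terms sum to exactly $\lambda_1(L(9,2))+\lambda_1(L(9,1))=-\frac{n+n'}{m}a_2(K)=-\frac13$ (the Casson--Walker substitution, which you do have), while the lens space terms of the second, even invariant enter as a \emph{difference} $\lambda_2(L(m,n))-\lambda_2(L(m,n'))$ together with the cross terms above; controlling these under the linking-form constraint is the core of the proof in \cite{it1} and is absent from your argument. In short: the reduction of the theorem to $(\star)$ and the case analysis are correct and faithful (including the footnoted case (i)), but the two load-bearing steps --- the existence of the second surgery formula with the stated shape, and the lens-space elimination --- are respectively unrealizable as stated and unproven.
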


This result, combined with the Casson-Walker invariant and the Casson-Gordon invariant, leads the following constraint.

\begin{theorem}\cite[Theorem 6.1]{iis}
\label{theorem:criteria-old}
Let $K$ be a knot in $S^{3}$, and let $d(K)$ be the degree of the Alexander polynomial of $K$. If $K$ admits a chirally cosmetic surgery of $\pm$-type and  $v_3(K)\neq 0$, then\footnote{In \cite{iis} the theorem is stated as non-strict inequality, but its proof actually says that the inequality is strict. Also, in \cite{iis} the necessary assumption $v_3(K)\neq 0$ was omitted (though it was implicit in the arguments).}
\[ 4|a_2(K)| < d(K) \left|\frac{7a_2(K)^2-a_2(K)-10a_4(K)}{8v_3(K)} \right|. \]
\end{theorem}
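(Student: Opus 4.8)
The plan is to bootstrap from the degree-two criterion and then extract a quantitative bound from two classical rational-homology-sphere invariants. Since $v_3(K) \neq 0$, Theorem \ref{theorem:deg-two-criterion}(ii) applies to the chirally cosmetic surgery $S^3_K(\frac{m}{n}) \cong -S^3_K(\frac{m}{n'})$ (with $n + n' \neq 0$) and gives
\[ \frac{m}{n+n'} = Q, \qquad Q := \frac{7a_2(K)^2 - a_2(K) - 10a_4(K)}{8v_3(K)}. \]
In particular $Q \neq 0$, since $m > 0$, so the target inequality $4|a_2(K)| < d(K)|Q|$ is just $|a_2(K)| < \frac{d(K)}{4}|Q|$. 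My goal is therefore to produce an \emph{exact} expression for $a_2(K)$ as a multiple of $Q$ and then to bound the remaining factor strictly by $d(K)/4$.

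First I would feed the surgery into the Casson--Walker invariant $\lambda$. Using the surgery formula $\lambda(S^3_K(\frac{p}{q})) = \frac{q}{p}a_2(K) + \lambda(L(p,q))$ (recall $a_2(K) = \frac12\Delta_K''(1)$) together with $\lambda(-M) = -\lambda(M)$, the identity $S^3_K(\frac{m}{n}) \cong -S^3_K(\frac{m}{n'})$ becomes $\frac{n}{m}a_2(K) + \lambda(L(m,n)) = -\frac{n'}{m}a_2(K) - \lambda(L(m,n'))$, i.e. $\frac{n+n'}{m}a_2(K) = -(\lambda(L(m,n)) + \lambda(L(m,n')))$. Combining with $\frac{n+n'}{m} = 1/Q$ yields the clean relation $a_2(K) = -Q(\lambda(L(m,n)) + \lambda(L(m,n')))$, so that $|a_2(K)| = |Q| \cdot |\lambda(L(m,n)) + \lambda(L(m,n'))|$. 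Thus the whole theorem reduces to the purely lens-space estimate $|\lambda(L(m,n)) + \lambda(L(m,n'))| < d(K)/4$.

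For this remaining estimate I would bring in the Casson--Gordon invariants, which see more than the Casson--Walker invariant. The orientation-reversing homeomorphism matches, up to sign and a relabelling of the characters of $H_1 = \Z/m$ induced by an isometry of the linking forms, the Casson--Gordon invariant of $S^3_K(\frac{m}{n})$ with that of $S^3_K(\frac{m}{n'})$. Writing each such invariant as a lens-space equivariant signature plus a correction built from the Tristram--Levine signatures $\sigma_K(\omega)$ of $K$, these matchings pin down the admissible relabelling and express the discrepancy between the rigid lens-space pieces---which is precisely what controls $\lambda(L(m,n)) + \lambda(L(m,n'))$---as a combination of the $\sigma_K(\omega)$. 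Because the signature function of $K$ is bounded in modulus and jumps only at the at most $d(K)$ roots of $\Delta_K(t)$ on the unit circle, this combination is strictly bounded by $d(K)/4$, and substituting into the identity above gives the theorem.

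The main obstacle is exactly this last step: converting the character-by-character Casson--Gordon matching into a single clean inequality with the correct constant $d(K)/4$ and, crucially, with strict inequality (the source of the footnoted correction to \cite{iis}). This requires carefully tracking the permutation of characters induced on $\Z/m$, identifying the precise combination of Tristram--Levine signatures that survives once the lens-space pieces are compared, and invoking a strict bound on that combination---strictness being forced here because the hypothesis $v_3(K) \neq 0$ rules out the extremal, degenerate configuration. The Casson--Walker step is then essentially bookkeeping, and the degree-two criterion supplies the value of $Q$ that turns the signature estimate into the stated inequality.
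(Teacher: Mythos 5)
Your skeleton is the same as that of the proof this theorem actually has: the paper does not reprove it but quotes it from \cite[Theorem 6.1]{iis}, and the sentence preceding the statement records that the proof combines Theorem \ref{theorem:deg-two-criterion} with the Casson--Walker and the Casson--Gordon invariants, exactly the three ingredients you name. Your first half is correct and matches that reduction: $v_3(K)\neq 0$ puts you in case (ii) of Theorem \ref{theorem:deg-two-criterion}, so $\frac{m}{n+n'}=Q$, and Walker's surgery formula together with $\lambda(-M)=-\lambda(M)$ gives $\frac{n+n'}{m}a_2(K)=-\bigl(\lambda(L(m,n))+\lambda(L(m,n'))\bigr)$, so the theorem is equivalent to the estimate $|\lambda(L(m,n))+\lambda(L(m,n'))|<d(K)/4$.

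The problem is that the second half --- the only place where $d(K)$ enters --- is precisely what you do not prove, and your sketch of it is misleading in two ways. First, there is no need to ``pin down the admissible relabelling of characters,'' and trying to track it is the wrong move: one sums the Casson--Gordon invariants over \emph{all} non-trivial characters of $H_1=\Z/m$, a quantity invariant under any isomorphism of $H_1$, so the relabelling is irrelevant. The surgery formula writes each $\sigma(S^3_K(\frac{m}{n}),\chi)$ as a lens-space term plus one Tristram--Levine signature $\sigma_K(\omega)$ at an $m$-th root of unity, and the summed lens-space contribution is (up to a universal normalization) $m\,s(n,m)$, i.e.\ a multiple of $m\,\lambda(L(m,n))$; matching the two surgeries and substituting into the Casson--Walker identity then yields, after cancellation, an identity of the shape $4(n+n')a_2(K)=\mp\sum_{r=1}^{m-1}\sigma_K(e^{2\pi i r/m})$. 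Without this identity your ``combination of the $\sigma_K(\omega)$'' is not pinned down and no constant can be extracted. Second, your account of strictness is wrong: the hypothesis $v_3(K)\neq 0$ plays no role there (it is needed only so that $Q$ is defined, i.e.\ so that case (ii) rather than case (i) of Theorem \ref{theorem:deg-two-criterion} applies). Strictness comes from the estimate itself: the sum above has only $m-1$ terms, each bounded by $\max_\omega|\sigma_K(\omega)|\leq d(K)$ (Milnor: $\sigma_K$ vanishes near $\omega=1$ and jumps only at unit-circle roots of $\Delta_K$, which with conjugation symmetry gives the bound $d(K)$), so $\bigl|\sum_{r}\sigma_K(e^{2\pi i r/m})\bigr|\leq (m-1)\,d(K)<m\,d(K)$, which is exactly the strict inequality $4|a_2(K)|\,|n+n'|<m\,d(K)$, i.e.\ $4|a_2(K)|<d(K)|Q|$. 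So: right strategy, correct reduction, but the decisive Casson--Gordon step --- which you yourself flag as ``the main obstacle'' --- is missing, and the two structural claims you make about how it should go would lead you astray.
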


We add new constraints by combining the using the finite type invariant of degree two and the rank of the Heegaard Floer homology. 

After reviewing and stating the direct consequence from the rank of the Heegaard Floer homology in Proposition \ref{prop:rankHF-result}, We observe that the rank of the Heegaard Floer homology gives the following criterion, which was implicit in \cite{os2}.

\begin{theorem}
\label{theorem:nu}
Let $K$ be a non-trivial knot. If $K$ admits a chirally cosmetic surgery of $\pm$-type then $\max\{\nu(K),\nu({\overline{K}})\} > 0$.
\end{theorem}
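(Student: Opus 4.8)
The plan is to compare the total ranks $\operatorname{rk}\widehat{HF}$ on the two sides of a putative chirally cosmetic surgery, feeding in the Ozsv\'ath--Szab\'o rational surgery formula recorded in Proposition \ref{prop:rankHF-result}. Since $\operatorname{rk}\widehat{HF}(-Y)=\operatorname{rk}\widehat{HF}(Y)$, a homeomorphism $S^{3}_K(\frac{m}{n})\cong -S^{3}_K(\frac{m}{n'})$ at once gives $\operatorname{rk}\widehat{HF}(S^{3}_K(\frac{m}{n}))=\operatorname{rk}\widehat{HF}(S^{3}_K(\frac{m}{n'}))$. I argue by contraposition, assuming $\nu(K)\leq 0$ and $\nu(\overline{K})\leq 0$ and seeking a contradiction. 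It is worth noting that, via the standard facts $\nu(K)\in\{\tau(K),\tau(K)+1\}$ and $\tau(\overline{K})=-\tau(K)$, this assumption actually forces $\tau(K)=0$ and $\nu(K)=\nu(\overline{K})=0$, so the geometrically interesting case is exactly the one where the ``main'' (tower) part of the invariant is balanced.

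The key input is that, for $n>0$, Proposition \ref{prop:rankHF-result} expresses the rank as
\[\operatorname{rk}\widehat{HF}\!\left(S^{3}_K\!\left(\tfrac{m}{n}\right)\right)=m+2nR(K)+2\max\{0,(2\nu(K)-1)n-m\},\]
where $R(K)\geq 0$ is the reduced rank of a large positive surgery on $K$, with the symmetric statement for $n<0$ obtained from $S^{3}_K(\frac{m}{n})\cong -S^{3}_{\overline{K}}(\frac{m}{|n|})$. Under $\nu(K)=\nu(\overline{K})=0$ the coefficient $2\nu-1=-1$ kills the correction term for every nonzero $n$, so the rank collapses to $m+2nR(K)$ for $n>0$ and to $m+2|n|R(\overline{K})$ for $n<0$. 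Moreover a nontrivial knot with $\nu=0$ cannot be an L-space knot (an L-space knot has $\nu=g>0$), whence $R(K)=0$ is impossible; thus $R(K)>0$ and likewise $R(\overline{K})>0$, and each branch of the rank is strictly monotone in $|n|$.

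For a $+$-type surgery the slopes $\frac{m}{n},\frac{m}{n'}$ have $n,n'$ of the same sign, so the same branch governs both sides and strict monotonicity forces $|n|=|n'|$, i.e.\ $n=n'$, contradicting that inequivalent slopes satisfy $\frac{m}{n}\neq\frac{m}{n'}$. This disposes of the $+$-type case cleanly.

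The main obstacle is the $-$-type, where $n>0>n'$: equality of the two ranks now reads $nR(K)=|n'|R(\overline{K})$, and since $R(K)$ and $R(\overline{K})$ record the reduced ranks of large \emph{positive} and large \emph{negative} surgeries on $K$ --- which already differ for torus knots --- this does not by itself give $n=|n'|$ and hence does not contradict $n+n'\neq 0$. To close this case I would upgrade from the total rank to its spin$^{c}$-refinement: the homeomorphism matches the summands $\widehat{HF}(\,\cdot\,,\mathfrak{s})$ spin$^{c}$-by-spin$^{c}$ under an affine identification of $\Z/m$, and the refined surgery formula presents these summand ranks through the local data $\{V_{s},H_{s}\}$ and $\{\operatorname{rk}H_{*}(\widehat{A}_{s})\}$. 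The expectation is that, once $\tau(K)=0$ and $\nu(K)=\nu(\overline{K})=0$ have symmetrized the tower contribution, matching the full \emph{multiset} of summand ranks (rather than only their sum) is rigid enough to force $R(K)=R(\overline{K})$, and thus $n=|n'|$, contradicting $n+n'\neq 0$. Executing this spin$^{c}$-level bookkeeping, and in particular tracking the reindexing of spin$^{c}$ structures correctly, is the step I expect to require the most care.
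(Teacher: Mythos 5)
Your $+$-type argument is sound and is essentially the paper's argument run in the contrapositive direction: the paper deduces $C_K=0$ from Proposition \ref{prop:rankHF-result}(ii) and then $\nu(K)=g(K)>0$ from (\ref{eqn:genus}), while you deduce $C_K>0$ from $\nu(K)=0$ and nontriviality and then use strict monotonicity of the rank in $|n|$; these are the same ingredients. The problem is the $-$-type case, which you yourself leave as an ``expectation'': you never complete the proof there, only sketch a hoped-for spin$^c$-refined rigidity argument. As written, that is a genuine gap.

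Moreover, the obstacle you hit is an artifact of how you set up the negative-surgery rank, namely with the mirror's constant $R(\overline{K})$. The Ozsv\'ath--Szab\'o formula (Proposition \ref{proposition:HFrank} in the paper --- note this, not Proposition \ref{prop:rankHF-result}, is the rank formula) is stated under the normalization $\nu(K)\geq\nu(\overline{K})$, $\nu(K)\geq 0$ for \emph{all} signs of $n$ using the single constant $C_K$: for $n<0$ and $\nu(K)=0$ it reads $\mathrm{rank}\,\widehat{HF}(S^{3}_K(\frac{m}{n}))=m-nC_K$. Comparing this with your mirror expression $m+|n|\,C_{\overline{K}}$ (valid since $\nu(\overline{K})=0$) shows that under your contradiction hypothesis one automatically has $C_{\overline{K}}=C_K$; equivalently, equating $\mathrm{rank}\,\widehat{HF}(S^{3}_{-N}(K))=\mathrm{rank}\,\widehat{HF}(S^{3}_{N}(\overline{K}))$ for $N\gg 0$ gives $C_{\overline{K}}=C_K+\max\{0,4\nu(K)-2\}$, which is $C_K$ when $\nu(K)=0$. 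Your torus-knot examples showing $R(K)\neq R(\overline{K})$ all have $\nu>0$, so they do not arise under your hypothesis. With this fact, the $-$-type rank equality becomes $(n+n')C_K=0$, and since you already established $C_K>0$ for a nontrivial knot with $\nu(K)=0$, you get $n+n'=0$, contradicting the definition of $-$-type. This is precisely the paper's Proposition \ref{prop:rankHF-result}(iii); no spin$^c$-by-spin$^c$ bookkeeping is needed.
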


We also observe the following remarkable consequence.

\begin{corollary}
\label{cor:no-ZHS}
If $K$ admits a chirally cosmetic surgery $S^{3}_K(\frac{m}{n}) \cong -S^{3}_K(\frac{m}{n'})$, then $|\frac{n+n'}{m}| < 1$. In particular, if $K$ admits a chirally cosmetic surgery of $\pm$-type, then $|m|>2$.
\end{corollary}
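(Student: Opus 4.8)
The plan is to turn the homeomorphism into an equality of Heegaard Floer ranks and then analyse how that rank depends on the surgery coefficient. If the surgery is of $0$-type then $n'=-n$ and $|\tfrac{n+n'}{m}|=0<1$ trivially, so I assume throughout that we are in the $\pm$-type case, where by definition $n+n'\neq 0$. Since the total rank of $\widehat{HF}$ is unchanged by orientation reversal, the hypothesis $S^{3}_K(\tfrac mn)\cong -S^{3}_K(\tfrac{m}{n'})$ gives
\[ \operatorname{rank}\widehat{HF}(S^{3}_K(\tfrac mn))=\operatorname{rank}\widehat{HF}(S^{3}_K(\tfrac{m}{n'})). \]
It is convenient to note that, via $-S^{3}_K(\tfrac mq)=S^{3}_{\overline K}(\tfrac{m}{-q})$, the behaviour of this rank for $q<0$ is governed by the mirror $\overline K$.

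By Proposition \ref{prop:rankHF-result}, for fixed $m$ the assignment $q\mapsto \operatorname{rank}\widehat{HF}(S^{3}_K(\tfrac mq))$ is a nondecreasing, piecewise-linear function of $|q|$ whose growth is controlled by $\nu(K)$ for $q>0$ and by $\nu(\overline K)$ for $q<0$: it stays at the L-space value $m$ while $|q|$ is below a threshold of size about $\tfrac{m}{2\nu-1}$, and increases strictly once $|q|$ passes it. By Theorem \ref{theorem:nu} we have $\max\{\nu(K),\nu(\overline K)\}>0$, so at least one branch is genuinely increasing. Comparing the two equal ranks then constrains $|n|$ and $|n'|$ to the flat range, where both are bounded by roughly $\tfrac{m}{2\nu-1}$; this already yields $|n+n'|<m$ once the relevant $\nu\ge 2$.

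The main obstacle is the borderline where the rank conveys no information, namely when both surgeries are L-spaces -- most importantly for $(2,r)$-torus knots, which carry the genuine examples (A). There the rank function is locally constant and cannot separate $n$ from $n'$. To close this gap I would bring in the degree-two finite type constraint: by Theorem \ref{theorem:deg-two-criterion}, in the case $v_3(K)\neq 0$ the ratio is pinned down, $\tfrac{m}{n+n'}=\tfrac{7a_2(K)^{2}-a_2(K)-10a_4(K)}{8v_3(K)}$, so that $|\tfrac{n+n'}{m}|<1$ becomes the assertion that this rational invariant has modulus $>1$; the sharper inequality of Theorem \ref{theorem:criteria-old} is what I expect to push this across, and it is consistent with family (A), where one computes $|\tfrac{n+n'}{m}|=\tfrac1r$. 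The residual case $v_3(K)=0$ then has to be treated separately, falling back on the rank input of Theorem \ref{theorem:nu} together with the symmetry $v_3(\overline K)=-v_3(K)$.

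Finally, the ``in particular'' statement is a short parity argument independent of the Floer theory. For a $\pm$-type surgery $n+n'\neq 0$, so $|n+n'|\ge 1$ and the inequality $|n+n'|<m$ already forces $m\ge 2$. If $m=2$, then $\gcd(2,n)=\gcd(2,n')=1$ makes $n$ and $n'$ both odd, so $n+n'$ is a nonzero even integer with $|n+n'|\ge 2=m$, contradicting $|n+n'|<m$. Hence $m>2$.
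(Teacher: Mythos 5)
Your reduction to the $\pm$-type case and your closing parity argument for $m=2$ are both fine (the latter is exactly the paper's), but the core inequality $\left|\tfrac{n+n'}{m}\right|<1$ is not actually established by your argument. There are two concrete gaps. First, your handling of the ``borderline'' (L-space / flat-range) case does not close: combining Theorem \ref{theorem:deg-two-criterion}(ii) with Theorem \ref{theorem:criteria-old} yields only $\left|\tfrac{n+n'}{m}\right| < \tfrac{d(K)}{4|a_2(K)|}$, and this bound need not be $\leq 1$ for a general knot --- it is vacuous when $a_2(K)=0$ and exceeds $1$ whenever $d(K)>4|a_2(K)|$; the phrase ``is what I expect to push this across'' is precisely the unproved step, and checking consistency on family (A) is not a proof for all knots. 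Second, the residual case $v_3(K)=0$ is not addressed at all: Theorem \ref{theorem:nu} only asserts $\max\{\nu(K),\nu(\overline{K})\}>0$, which gives no bound whatsoever on $\tfrac{n+n'}{m}$. In addition, your structural claim that equality of ranks confines both $n$ and $n'$ to the flat range is false: Proposition \ref{prop:rankHF-result}(iv) covers exactly the $-$-type situations in which one slope lies below the threshold $2\nu(K)-1$, and there the rank equality produces an identity for $\tfrac{n+n'}{m}$ rather than a confinement.

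The ingredient you are missing is Corollary \ref{cor:HFbound}, which packages Proposition \ref{prop:rankHF-result} into a uniform bound valid for all three types of chirally cosmetic surgery:
\[
\left|\frac{n+n'}{m}\right| <
\begin{cases}
\frac{2}{2g(K)-1} & (C_K=0),\\
\frac{2}{C_K} & (C_K>0).
\end{cases}
\]
Since $C_K$ is a non-negative \emph{even} integer, $C_K>0$ forces $C_K\geq 2$ and hence the bound is at most $1$; if $C_K=0$ and $g(K)\geq 2$ the bound is at most $\tfrac{2}{3}$. The only exceptional case is $g(K)=1$ and $C_K=0$, which forces $K$ to be the trefoil (a genus-one L-space knot), and there the known classification of chirally cosmetic surgeries on the trefoil (family (A) with $r=3$) gives $\left|\tfrac{n+n'}{m}\right|=\tfrac{1}{3}$. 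This is the paper's proof; it needs no finite-type input at all, so the dichotomy $v_3(K)\neq 0$ versus $v_3(K)=0$, which is where your argument breaks down, never arises.
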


\begin{remark}
It is interesting to compare the result with Hanselman's strong constraint for purely cosmetic surgery \cite{ha} (that again comes from Heegaard Floer homology) which says that 
only the remaining possibilities are purely cosmetic surgery of $0$-type (i.e. slopes $r,r'$ satisfy $r+r'=0$) yielding integral homology sphere or homology projective space.
\end{remark}

By combining the rank of the Heegaard Floer homology and Theorem \ref{theorem:deg-two-criterion}, we prove the following.

\begin{theorem}
\label{theorem:weak}
Let $K$ be a knot. If $K$ admits a chirally cosmetic surgery of $\pm$-type and $v_3(K)\neq 0$, then $|7a_2(K)^{2}-a_2(K)-10a_4(K)| > C|4v_3(K)|$, where 
\[ C =
\begin{cases}
4 & \mbox{if } g(K) \neq \max\{\nu(K),\nu({\overline{K}})\}\\
2 & \mbox{otherwise.}
\end{cases}
\]
\end{theorem}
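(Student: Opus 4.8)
The plan is to reduce the desired inequality to a lower bound on $\left|\frac{m}{n+n'}\right|$ and then to supply that bound from the rank of Heegaard Floer homology. Write the chirally cosmetic surgery as $S^3_K(\frac mn) \cong -S^3_K(\frac{m}{n'})$ with $n+n' \neq 0$. Since $v_3(K) \neq 0$, we are in case (ii) of Theorem \ref{theorem:deg-two-criterion}, so that
\[ \frac{m}{n+n'} = \frac{7a_2(K)^2 - a_2(K) - 10a_4(K)}{8 v_3(K)}, \]
and hence
\[ |7a_2(K)^2 - a_2(K) - 10a_4(K)| = 8|v_3(K)| \cdot \left|\frac{m}{n+n'}\right|. \]
Because $8|v_3(K)| = 2|4v_3(K)|$ and the target inequality reads $|7a_2(K)^2 - a_2(K) - 10a_4(K)| > C|4v_3(K)|$, the entire theorem is equivalent to the single statement $\left|\frac{m}{n+n'}\right| > \frac C2$. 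Thus I must prove $|n+n'| < m$ in the ``otherwise'' case, and the twice-as-strong bound $|n+n'| < \frac m2$ in the case $g(K) \neq \max\{\nu(K),\nu(\overline K)\}$.

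The first of these is exactly Corollary \ref{cor:no-ZHS}, so the branch $C=2$ is immediate. For the branch $C=4$ I would return to the rank computation underlying Corollary \ref{cor:no-ZHS}, namely Proposition \ref{prop:rankHF-result}. The key input is that $\mathrm{rank}\,\widehat{HF}$ is insensitive to orientation reversal, so the chirally cosmetic identification forces
\[ \mathrm{rank}\,\widehat{HF}\big(S^3_K(\tfrac mn)\big) = \mathrm{rank}\,\widehat{HF}\big(S^3_K(\tfrac{m}{n'})\big). \]
Feeding in the rational surgery rank formula, whose leading term is $m = |H_1|$ and whose correction is controlled by $|n|$ together with the reduced Floer homology of $K$ for a positive surgery coefficient and of $\overline K$ for a negative one (using $S^3_K(\tfrac{m}{n'}) = -S^3_{\overline K}(\tfrac{m}{|n'|})$ when $n'<0$), the rank equality pins down a relation between $n$ and $n'$. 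The bound $|n+n'| < m$ comes from the crudest comparison. The point of the refinement is that when $\max\{\nu(K),\nu(\overline K)\}$ is strictly smaller than $g(K)$, the correction term carries an additional block of reduced homology over the range between the $\nu$-threshold and $2g(K)-1$, and this extra block sharpens the comparison by precisely a factor of two, yielding $|n+n'| < \frac m2$.

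I expect the main obstacle to be exactly this last step: extracting the clean factor-of-two improvement from the rank formula. This requires carefully separating the contributions of $K$ and $\overline K$ according to the signs of $n$ and $n'$, so that the $+$-type and $-$-type cases are handled in parallel, and tracking the floor and boundary terms of the rational surgery formula rather than only its leading linear behaviour. One must verify that the extra reduced homology present when $\max\{\nu(K),\nu(\overline K)\} < g(K)$ contributes to \emph{both} surgeries in a way that exactly halves the admissible size of $|n+n'|$, and that the degenerate situations isolated by Theorem \ref{theorem:nu} (where one of $\nu(K),\nu(\overline K)$ could otherwise vanish) do not spoil the estimate.
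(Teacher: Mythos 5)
Your reduction is correct and is the same first step as the paper's proof: by Theorem \ref{theorem:deg-two-criterion}(ii), the claimed inequality is equivalent to $\left|\frac{m}{n+n'}\right| > \frac{C}{2}$, and your treatment of the $C=2$ branch via Corollary \ref{cor:no-ZHS} is legitimate (in fact slightly safer than citing Corollary \ref{cor:HFbound} directly, since Corollary \ref{cor:no-ZHS} has already disposed of the edge case $g(K)=1$, $C_K=0$, i.e.\ the trefoil). The genuine gap is the $C=4$ branch, which you yourself flag as ``the main obstacle'' and never close. The missing ingredient is not a finer bookkeeping of the rational surgery rank formula (floor terms, or separating the contributions of $K$ and $\overline{K}$ according to the signs of $n,n'$ --- the paper avoids all of that by fixing the mirror once and for all so that $\nu(K)\geq\nu(\overline{K})$ and $\nu(K)\geq 0$, under which Proposition \ref{proposition:HFrank} covers both signs of $n$). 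It is a statement about the knot alone: the slope-independent constant $C_K=\sum_{s}(\mathrm{rank}\,H_*(\widehat{A}_s)-1)$ satisfies $C_K\geq 4$ whenever $g(K)\neq\nu(K)$ and $g(K)\neq 1$ (Lemma \ref{lemma:C_K}). This follows from a parity-and-symmetry argument: $\chi(\widehat{A}_s)=1$ forces each $\mathrm{rank}\,H_*(\widehat{A}_s)$ to be odd, and $\mathrm{rank}\,H_*(\widehat{A}_s)=\mathrm{rank}\,H_*(\widehat{A}_{-s})$; so if $C_K\leq 2$ the only excess rank can sit at $s=0$, and then the genus formula (\ref{eqn:genus}) forces $g(K)=\nu(K)$ or $g(K)=1$. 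Equivalently, when $\nu(K)<g(K)$ and $g(K)>1$, the excess rank detected at $s=g(K)-1$ is mirrored at $s=-(g(K)-1)$, each contributing at least $2$, whence $C_K\geq 4$.

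With that lemma in hand, Corollary \ref{cor:HFbound} --- which is exactly the consequence of Proposition \ref{prop:rankHF-result} you propose to re-derive --- gives $\left|\frac{n+n'}{m}\right| < \frac{2}{C_K}\leq\frac{1}{2}$, which is the required bound; and in the degenerate case $g(K)=1$ with $g(K)\neq\nu(K)$ one gets $\nu(K)=0$, so Theorem \ref{theorem:nu} rules out $\pm$-type surgeries altogether and the claim is vacuous there (you correctly anticipated this last point). Your heuristic that an ``additional block of reduced homology \dots sharpens the comparison by precisely a factor of two'' points in the right direction, but as written it is not a proof: the factor of two does not come from slope-dependent boundary terms of the surgery formula, it comes from the evenness of $C_K$ together with the symmetric placement of the excess rank at $s=\pm(g(K)-1)$, and without that argument your $C=4$ case remains unproven.
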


When $K$ is homologically thin (for example, alternating knots), we get a stronger constraint.

\begin{theorem}
\label{theorem:criteria-thin}
Let $K$ be a homologically thin knot. If $K$ admits a chirally cosmetic surgery of $\pm$-type and $v_3(K)\neq 0$, then 
\[ |7a_2(K)^{2}-a_2(K)-10a_4(K)| > \frac{1}{2}(|\det(K)|-2|\tau(K)|-1)|4v_3(K)|\]
\end{theorem}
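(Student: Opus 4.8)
The plan is to combine the exact value of $m/(n+n')$ supplied by Theorem~\ref{theorem:deg-two-criterion}(ii) with the behaviour of the total rank of $\widehat{HF}$ under the chirally cosmetic relation, following the scheme used for Theorem~\ref{theorem:weak} but feeding in the sharper rank computation available for thin knots. Since $v_3(K)\neq 0$, Theorem~\ref{theorem:deg-two-criterion}(ii) gives $\frac{m}{n+n'}=\frac{7a_2(K)^2-a_2(K)-10a_4(K)}{8v_3(K)}$, so (using $m>0$ and $n+n'\neq 0$) the desired inequality is equivalent to the arithmetic statement $\frac{m}{|n+n'|}>\tfrac14\bigl(|\det(K)|-2|\tau(K)|-1\bigr)$. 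If $|\det(K)|-2|\tau(K)|-1=0$ the right-hand side vanishes while the left-hand side equals $|8v_3(K)|\,m/|n+n'|>0$, so the inequality holds; hence I may assume $|\det(K)|-2|\tau(K)|-1>0$, that is, that the knot Floer complex carries at least one square summand.

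Next I would make the rank side explicit. Because $\dim\widehat{HF}$ is insensitive to orientation reversal, $S^3_K(m/n)\cong -S^3_K(m/n')$ forces $\dim\widehat{HF}(S^3_K(m/n))=\dim\widehat{HF}(S^3_K(m/n'))$. For a homologically thin knot one has $\dim\widehat{HFK}(K)=|\det(K)|$, and the knot Floer complex splits as a staircase summand governed by $\tau(K)$ (so that $\nu(K)=\max\{\tau(K),0\}$) together with $b=\tfrac14(|\det(K)|-2|\tau(K)|-1)$ square summands. Substituting this splitting into the rational surgery rank formula (Proposition~\ref{prop:rankHF-result}) writes each side of the rank equality as an explicit piecewise-linear function of the surgery coefficients: a term $|H_1|=m$, a box term proportional to $b$ and to the relevant denominator, and a staircase term of the form $\max\{0,\cdot\}$ with corner at the L-space slope $2\nu(K)-1$.

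I would then equate the two ranks and treat the types separately. For a $+$-type pair both denominators have the same sign, so the staircase contribution is the same on both sides; since this contribution is monotone in the slope, the equality $2b(|n|-|n'|)=2\bigl(\Phi(m/n')-\Phi(m/n)\bigr)$ cannot hold for distinct denominators when $b>0$, so no $+$-type surgery occurs and the inequality holds vacuously. The substantive case is the $-$-type one, where—after replacing $K$ by $\overline{K}$ if necessary, which leaves $|\det(K)|$, $|\tau(K)|$ and the claimed inequality unchanged—I may take $\tau(K)\geq 0$ and opposite-signed denominators. Equating ranks then yields a linear relation among $m$, $n$ and $n'$, which I would combine with $|m|>2$ from Corollary~\ref{cor:no-ZHS}, with the coprimality of $m$ with $n$ and with $n'$, and with the exact value of $m/(n+n')$ above, to deduce $\frac{m}{|n+n'|}>b$.

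The main obstacle is precisely this last step. The staircase term is piecewise linear with a corner at the threshold slope $2\nu(K)-1$, and the straightforward estimate only controls the branch in which the positive-side slope lies above the threshold; there the bound $\frac{m}{|n+n'|}>b$ drops out at once. In the complementary below-threshold branch the rank equality alone is too weak—it admits spurious solutions of the linear relation that would violate the target—so one must bring in the precise Casson--Walker value recorded in Theorem~\ref{theorem:deg-two-criterion}(ii) together with the coprimality constraints to rule out the offending Diophantine solutions. Arranging the threshold case analysis so that the rank input and the finite-type input genuinely reinforce each other, rather than merely overlap, is where I expect the real work to lie.
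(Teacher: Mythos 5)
Your reduction is the right one---via Theorem \ref{theorem:deg-two-criterion}(ii) the claim is equivalent to $\frac{m}{|n+n'|} > \frac{1}{4}\left(|\det(K)|-2|\tau(K)|-1\right)$---and your identification of the thin-knot complex as a staircase plus $b=\frac{1}{4}(|\det(K)|-2|\tau(K)|-1)$ squares (so that $C_K=2b$) is exactly the content of Lemma \ref{lemma:thin-C_K}. But the proposal has a genuine gap precisely where you flag it, and your proposed repair points in the wrong direction. In the $-$-type, below-threshold branch ($0<\frac{m}{n}\leq 2\nu(K)-1$) the rank equality is \emph{not} ``too weak'': writing out Proposition \ref{proposition:HFrank} on both sides and solving gives the exact value $\frac{n+n'}{m}=\frac{2}{4\nu(K)-2+C_K}$, which is Proposition \ref{prop:rankHF-result}(iv-a). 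Moreover, in any genuine $-$-type situation one has $\nu(K)>0$, since $\nu(K)=0$ forces $n+n'=0$ by Proposition \ref{prop:rankHF-result}(iii), i.e.\ the surgery would be of $0$-type. Hence $4\nu(K)-2\geq 2$, and $\frac{n+n'}{m}<\frac{2}{C_K}=\frac{1}{b}$ follows at once: the below-threshold branch is no harder than the above-threshold one. There are no ``spurious Diophantine solutions'' to rule out, and no further input from the Casson--Walker value, coprimality, or $|m|>2$ is needed. Indeed, citing Corollary \ref{cor:no-ZHS} as an input inverts the paper's logic: that corollary is itself deduced from the bound $|\frac{n+n'}{m}|<\frac{2}{C_K}$ (Corollary \ref{cor:HFbound}) that you are in effect trying to re-establish.

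Once this is observed, your argument collapses to the paper's: Corollary \ref{cor:HFbound} packages all branches ($+$-type, and $-$-type above and below the threshold slope) into the single inequality $|\frac{n+n'}{m}|<\frac{2}{C_K}$ when $C_K>0$, and the theorem is then the one-line concatenation of that corollary with Lemma \ref{lemma:thin-C_K} and Theorem \ref{theorem:deg-two-criterion}(ii). Your treatment of the remaining cases is fine in outline: the case $b=0$ is trivial as you say, and for $+$-type the rank equality (equivalently Proposition \ref{prop:rankHF-result}(ii)) forces $C_K=0$, so $b>0$ excludes it. What is missing is the elementary but essential step of actually solving the rank equation in the below-threshold branch; that computation, which the paper carries out and you defer to hypothetical future work, is the whole proof.
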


\subsection{Summary of constraints to admit chirally cosmetic surgery}

We summarize and unify the obstructions for knots to admit chirally cosmetic surgeries obtained so far. 

To state the constraint in an unified form, it is useful introduce the following quantity $O(K)$; for a knot $K$, we define 
\[ O(K)=\begin{cases} 
\left|\dfrac{7a_2(K)^2-a_2(K)-10a_4(K)}{4v_3(K)}\right| & \mbox{ if } v_3(K)\neq 0\\
\infty & \mbox{otherwise}.
\end{cases}
\]

Then arguments discussed so far are summarized in the following form.

\begin{theorem}
\label{theorem:final}
A knot $K$ has no chirally cosmetic surgeries if one of the following conditions is satisfied;
\begin{itemize}
\item[(i-a)] $g(K)=\max\{\nu(K),\nu(\overline{K})\}$, and $O(K) \leq 2$.
\item[(i-a$'$)] $g(K)\neq \max\{\nu(K),\nu(\overline{K})\}$, and $O(K) \leq 4$.
\item[(i-b)] $K$ is homologically thin, and $O(K) \leq \frac{1}{2}(|\det(K)|-2|\tau(K)|-1)$
\item[(i-c)] $O(K) \leq \frac{|8a_2(K)|}{d(K)}$.
\item[(ii)] $v_3(K)\neq 0$, $\max\{\nu(K),\nu({\overline{K}})\} = 0$. 
\item[(iii)] $v_3(K)=0$, $v_5(K)\neq 0$ and $7a_2(K)^2-a_2(K)-10a_4(K) \neq 0$. 
\end{itemize}
\end{theorem}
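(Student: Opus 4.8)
The plan is to argue by contradiction, showing that under each listed hypothesis a chirally cosmetic surgery of \emph{any} of the three types cannot occur. The key preliminary observation is purely formal: by definition $O(K)$ is finite precisely when $v_3(K)\neq 0$, and in that case $O(K)=\frac{|7a_2(K)^2-a_2(K)-10a_4(K)|}{4|v_3(K)|}$. This lets me rephrase each of the strict inequalities appearing in Theorems \ref{theorem:criteria-old}, \ref{theorem:weak} and \ref{theorem:criteria-thin} as a strict lower bound on $O(K)$; for instance dividing the inequality of Theorem \ref{theorem:criteria-old} through by $4|v_3(K)|$ turns it into $O(K)>\frac{8|a_2(K)|}{d(K)}$. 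With this reformulation in hand the remaining work is a case analysis matching each hypothesis to the appropriate obstruction.

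First I would dispose of the $0$-type. If $K$ admitted a chirally cosmetic surgery of $0$-type, then Theorem \ref{theorem:v3-v5-0-type} forces $v_3(K)=v_5(K)=0$. But conditions (i-a), (i-a$'$), (i-b), (i-c) all require $O(K)$ to be finite, hence $v_3(K)\neq 0$; condition (ii) assumes $v_3(K)\neq 0$ directly; and condition (iii) assumes $v_5(K)\neq 0$. Thus no hypothesis is compatible with a $0$-type surgery, and this case is closed uniformly.

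Next I would rule out the $\pm$-type surgery $S^3_K(\frac{m}{n})\cong -S^3_K(\frac{m}{n'})$ with $n+n'\neq 0$, treating the conditions one at a time. For (i-a) and (i-a$'$) the constant $C\in\{2,4\}$ is exactly the one produced by Theorem \ref{theorem:weak}, which yields $O(K)>C$ and contradicts $O(K)\leq C$. For (i-b), Theorem \ref{theorem:criteria-thin} gives $O(K)>\frac{1}{2}(|\det(K)|-2|\tau(K)|-1)$, against the hypothesis. For (i-c), the reformulated Theorem \ref{theorem:criteria-old} gives $O(K)>\frac{8|a_2(K)|}{d(K)}$, again contradicting the hypothesis. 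For (ii), Theorem \ref{theorem:nu} forces $\max\{\nu(K),\nu(\overline{K})\}>0$, contradicting the assumption that this maximum vanishes. Finally for (iii), since $v_3(K)=0$ the alternative (ii) of Theorem \ref{theorem:deg-two-criterion} is unavailable, so alternative (i) must hold and gives $7a_2(K)^2-a_2(K)-10a_4(K)=0$, contradicting the standing assumption of (iii). In every case the $\pm$-type is excluded, and combined with the previous paragraph this shows $K$ admits no chirally cosmetic surgery.

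I expect no conceptual difficulty: the statement is a repackaging of the earlier theorems, so the only points requiring care are bookkeeping ones. In particular I would verify the degenerate endpoints of the inequalities—for instance that when the numerator $7a_2(K)^2-a_2(K)-10a_4(K)$ vanishes while $v_3(K)\neq 0$ one has $O(K)=0$, and alternative (ii) of Theorem \ref{theorem:deg-two-criterion} then reads $\frac{m}{n+n'}=0$, which is impossible since $m>0$; this covers the boundary case $a_2(K)=0$ in (i-c). I would also sanity-check consistency with Conjecture \ref{conj:chirally-cosmetic} by confirming that the genuine families (A) and (B) never satisfy any hypothesis: amphicheiral knots have $v_3(K)=v_5(K)=0$, so every clause fails, while $(2,p)$-torus knots realize the reverse of the $\pm$-type inequalities and hence fail (i-a)--(i-c) as well.
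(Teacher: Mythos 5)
Your proposal is correct and follows essentially the same route as the paper, which presents Theorem~\ref{theorem:final} precisely as a summary of Theorems~\ref{theorem:v3-v5-0-type}, \ref{theorem:deg-two-criterion}, \ref{theorem:criteria-old}, \ref{theorem:nu}, \ref{theorem:weak} and~\ref{theorem:criteria-thin}: one rules out $0$-type surgeries via $v_3=v_5=0$, and $\pm$-type surgeries by rewriting each strict inequality as a lower bound on $O(K)$ (noting that finiteness of $O(K)$ encodes $v_3(K)\neq 0$) and matching it against the corresponding hypothesis. Your bookkeeping of the constants, the strictness of the inequalities, and the degenerate cases ($d(K)=0$, vanishing numerator) is consistent with the paper's conventions.
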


Here for (i-c), when $d(K)=0$, i.e., the Alexander polynomial of $K$ is trivial, we regard that the condition (i-c) is not satisfied.

The conditions (i-a),(i-a$'$),(i-b),(i-c) states that $K$ has no chirally cosmetic surgeries whenever $O(K)$ is `small' (in particular, they implicitly assume that $v_3(K)\neq 0$ because otherwise we defined $O(K)=\infty$).
Among these four conditions, (i-a)/(i-a$'$) are the weakest, although it can be applied for all knots. In contrast, condition (i-b) is often stronger than the other criteria but it requires homologically thin assumption. Finally, the condition (i-c) is useful only when $d(K)$ is small or $a_2(K)$ is large (typically genus one case).

\subsection{Computations and discussions}

Although it seems to be feasible to use Theorem \ref{theorem:final} to prove Conjecture \ref{conjecture:main} for suitable family of knots (especially, a family of alternating knots), to understand to what extent our current criterion works for  Conjecture \ref{conjecture:main}, we use Theorem \ref{theorem:final} to determine whether a knot admits a chirally cosmetic surgery or not, for knots up to 12 crossings which are neither the $(2,p)$ torus knots nor amphicheiral.

First we check the criterion (i-b) for alternating knots, (i-c), and (iii).
Since $2\tau(K)=\sigma(K)$ holds for alternating knots, to check these three constraints, one can avoid to compute Heegaard-Floer theoretical invariants $\nu(K)$ and $\tau(K)$ which are in general, harder to compute.

The result is summarized in Table \ref{table:result};
\begin{itemize}
\item The column `Target' represents the number of non-amphicheiral, non-$(2,p)$ torus knots.
\item The column `Alternating' represents the number of alternating knots with $v_3(K)\neq 0$ (among the target knots).
\item The column `(i-c)' represents the number of knots such that non-existence of chirally cosmetic surgeries is confirmed by applying the criteria (i-c) in Theorem \ref{theorem:final}. The columns `(i-b)', `(iii)' are similar. 
\end{itemize}

\begin{table}[htbp] \centering

  \begin{tabular}{|c|c|c|c|c|c|c|c|c|}
\hline
  \multicolumn{2}{|c|}{\rm Crossing number} & $\leq 8$ & 9 & 10 & 11a & 11n & 12a & 12n\\ \hline
\multicolumn{2}{|c|}{\rm Target} &25 & 48 & 152 & 366 & 185 & 1234& 884\\ \hline
& Total & 24 &44 & 133 & 324 & 168 &  1095 & 785 \\ \cline{2-9}
 $v_3 \neq 0$ & (i-c) & 3 & 3& 6 &24  & 7 & 54& 28\\  \cline{2-9}
 & Alternating & 21 & 37 & 96 & 324 & - & 1095 & - \\ \cline{2-9}
& (i-b) & 8 & 25 & 68 & 307 & - &1020 & - \\ \hline
$v_3=0$ & Total & 1& 4 & 19 & 42 & 17 & 139 & 99 \\ \cline{2-9}
 & (iii) & 1 & 2 & 18 & 33 & 14 & 109 & 87 \\ \hline
\end{tabular}
\medskip
\caption{Summary of computations}
\label{table:result}
\end{table}

This computation, together with the following additional knowledge/computations
\begin{itemize}
\item our previous result that a non-amphichieral genus one alternating knot, never admits a chirally cosmetic surgery unless it is the trefoil \cite{iis}.
\item Torus knots other than $(2,p)$-torus knots do not admit a chirally cosmetic surgery.
\item information of which knots are quasi-alternating (these knots are homologically thin and $2\tau(K)=\sigma(K)$ so one can apply Theorem \ref{theorem:final} (i-b) like alternating knot case).
\item checking the condition (ii) of Theorem \ref{theorem:final}, where we borrowed an information of nu-invariant from the Knotinfo \cite{knotinfo}. 
\end{itemize}
we conclude the following.

\begin{theorem}
\begin{itemize}
\item[(i)] Up to ten crossings, non-amphicheiral knots do not admit chirally cosmetic surgeries of $0$-type except $9_{28},9_{42},10_{71}$.
\item[(ii)] Up to ten crossings, non-amphicheiral, non-$(2,p)$-torus knots do not admit chirally cosmetic surgeries, except the knots appearing in the following table.
\end{itemize}
\end{theorem}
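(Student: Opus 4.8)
The plan is to prove the final theorem by systematically applying the criteria assembled in Theorem~\ref{theorem:final} to the finite census of knots, separating the argument according to the three types of chirally cosmetic surgery. The theorem is really a bookkeeping statement verifying that the constraints, once computed knot-by-knot, rule out all but a listed exceptional set. First I would organize the census of knots with at most ten crossings that are neither amphicheiral nor $(2,p)$-torus knots, and for each such knot assemble the relevant invariants: the Conway coefficients $a_2(K)$ and $a_4(K)$, the degree-three invariant $v_3(K)$, the degree-five invariant $v_5(K)$ (computed from the Kauffman polynomial via Theorem~\ref{theorem:v5-formula}), the determinant $\det(K)$, the signature $\sigma(K)$, and—where needed—the nu-invariant $\nu(K)$ taken from \cite{knotinfo}. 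From these I form the single quantity $O(K)$ and the auxiliary expression $7a_2(K)^2-a_2(K)-10a_4(K)$ on which all the criteria depend.

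For part~(i), concerning $0$-type surgeries, the plan is to invoke Theorem~\ref{theorem:v3-v5-0-type}: a $0$-type chirally cosmetic surgery forces $v_3(K)=v_5(K)=0$. Thus any knot with $v_3(K)\neq 0$ or $v_5(K)\neq 0$ is immediately excluded. I would then verify by direct computation that among the non-amphicheiral knots up to ten crossings, exactly $9_{28}$, $9_{42}$, and $10_{71}$ survive this vanishing test, which pins down the exceptional list. For part~(ii), concerning $\pm$-type surgeries, I would proceed in cases. When $v_3(K)\neq 0$, I would apply the quantitative criteria (i-a)/(i-a$'$), (i-b), and (i-c), checking for each knot whether $O(K)$ falls below the appropriate threshold; for alternating and quasi-alternating knots I would use the homologically thin criterion (i-b), exploiting $2\tau(K)=\sigma(K)$ so as to avoid computing Heegaard Floer invariants directly. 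When $v_3(K)\neq 0$ but $\max\{\nu(K),\nu(\overline{K})\}=0$, criterion~(ii) applies. When $v_3(K)=0$, I would apply criterion~(iii), which requires $v_5(K)\neq 0$ together with $7a_2(K)^2-a_2(K)-10a_4(K)\neq 0$.

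The surviving knots—those not eliminated by any criterion—constitute the exceptional table. Here I would supplement the automated checks with the three pieces of external input listed before the theorem: the previous result that non-amphicheiral genus-one alternating knots other than the trefoil admit no chirally cosmetic surgery \cite{iis}, the fact that torus knots other than $(2,p)$-torus knots admit no such surgery, and the quasi-alternating information that extends the reach of~(i-b). The main obstacle I anticipate is not conceptual but computational and organizational: the criteria overlap and none is uniformly strongest, so for each knot one must determine which constraint is decisive and ensure that the thresholds are evaluated correctly, particularly in the borderline cases where $O(K)$ is close to a bound. A subtler point is the careful handling of the homologically thin hypothesis—verifying that a given knot is genuinely quasi-alternating before applying~(i-b)—and the correct treatment of the degenerate case $d(K)=0$ in criterion~(i-c), where the criterion must be declared inapplicable. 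Once these case distinctions are handled consistently across the census, the residual knots are exactly those in the stated table.
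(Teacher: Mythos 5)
Your proposal is correct and takes essentially the same route as the paper: the paper also establishes part~(i) via the $v_3(K)=v_5(K)=0$ obstruction (Theorem~\ref{theorem:v3-v5-0-type}), and part~(ii) by a knot-by-knot check of the criteria of Theorem~\ref{theorem:final} --- (i-b) for alternating/quasi-alternating knots using $2\tau(K)=\sigma(K)$, (i-c), (iii), and (ii) with $\nu$ taken from \cite{knotinfo} --- supplemented by the same external facts (the genus-one alternating result of \cite{iis} and the non-existence result for torus knots other than $(2,p)$). The knots surviving all these tests are exactly the tabulated exceptions, which is precisely the paper's computation.
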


\begin{table}[h]
  \centering
  \begin{tabular}{|c|c|}
   \hline
   {\rm Crossing number}& {\rm Exceptions} \\
   \hline \hline 
$\leq 8$ & $ 6_2,7_3, 7_5, 8_2, 8_4, 8_5,8_6,8_{11},8_{21}$\\
\hline
  $9$ & $9_3,9_4,9_6,9_9,9_{10},9_{13},9_{25},9_{28},9_{42},9_{43},9_{48},9_{49}$\\
\hline
 & $10_2,10_4,10_6,10_8,10_9,10_{11},10_{16},10_{18},10_{29},$\\
 & $10_{30},10_{39},10_{46},10_{54},10_{61},10_{64},10_{70},10_{71},$\\
$10$& $10_{93},10_{94},10_{106},10_{126},10_{127},10_{128},$\\
& $10_{132},10_{134},10_{138},10_{139},10_{142},10_{144},$ \\
& $10_{145},10_{150},10_{152},10_{154},10_{160},10_{161},10_{162}$\\
\hline
  \end{tabular}
\medskip \medskip
\label{table:result2}
\caption{Knots up to 10 crossings for which non-existence of chirally cosmetic surgery is not detected by our criterion (see Remark \ref{remark:Kegel} for a discussion that they actually do not admit chirally cosmetic surgeries). }
\end{table}

Thus currently our constraints can rule out the existence of chirally cosmetic surgery of $0$-types for all but three non-amphicheiral knots up to 10 crossings, and can rule out the existence of chirally cosmetic surgeries 
for a large portion (roughly 75 \%) of non-amphicheiral, non-$(2,p)$-torus knots up to 10 crossings.

As Table \ref{table:result} shows, when $K$ is alternating the criteria (i-b) will be useful as the crossing number increases. This is because, generically speaking, $\det K$ will grow rapidly than other invariants. In particular, even if Conjecture \ref{conjecture:main} might be false or hard to attack, the following would be more plausible and more tractable;

\begin{conjecture}
\label{conj:alternating}
Non-amphicheiral alternating knots other than the $(2,p)$-torus knots do not admit chirally cosmetic surgery.
\end{conjecture}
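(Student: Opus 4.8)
The plan is to deduce the conjecture entirely from the obstructions collected in Theorem~\ref{theorem:final}, using that alternating knots are homologically thin and satisfy $2\tau(K)=\sigma(K)$. Fix a non-amphicheiral alternating knot $K$ that is not a $(2,p)$-torus knot. I would first dispose of the degenerate case $v_3(K)=0$: here $O(K)=\infty$, so every criterion except (iii) is vacuous, and one must show that $v_5(K)\neq 0$ and $7a_2(K)^2-a_2(K)-10a_4(K)\neq 0$. Since $v_3$ and $v_5$ both change sign under mirroring, the locus $\{v_3=v_5=0\}$ should be very restrictive among alternating knots; I would attempt to show, using the Kauffman-polynomial formula for $v_5$ (Theorem~\ref{theorem:v5-formula}) together with the sign constraints on the Conway coefficients of alternating knots, that $v_3(K)=v_5(K)=0$ forces $K$ to be amphicheiral.

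For the main case $v_3(K)\neq 0$ the workhorse is criterion (i-b): it suffices to establish
\[ O(K)=\frac{|7a_2(K)^2-a_2(K)-10a_4(K)|}{|4v_3(K)|}\ \le\ \tfrac12\bigl(|\det(K)|-|\sigma(K)|-1\bigr). \]
The right-hand side is governed by $D(K):=\det(K)-|\sigma(K)|-1$, and the structural fact I would build the argument on is that $D(K)=0$ at every $(2,p)$-torus knot---indeed $\det T(2,p)=p$ and $|\sigma(T(2,p))|=p-1$---which is exactly why (i-b) correctly fails to obstruct the family (A). The conjecture thus reduces, modulo the analytic estimate, to a sharp gap theorem: among alternating knots, $D(K)$ vanishes \emph{only} for the $(2,p)$-torus knots, and for all others it is bounded below by an increasing function of the crossing number, large enough to dominate $O(K)$.

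To prove such a gap I would pass to a reduced alternating diagram and its Tait (checkerboard) graph $G$, where $\det(K)$ equals the spanning-tree number of $G$ and $\sigma(K)$ is read off from the Goeritz/Gordon--Litherland form. The $(2,p)$-torus knots correspond to the thinnest graphs (cycles of bigons), and the aim is to show that any departure from this shape---an extra branch, a non-bigon face, or a vertex of higher valence---forces the spanning-tree count to exceed $|\sigma|+1$ by a margin that grows with the crossing number, comparing deletion--contraction recursions for $\det$ against the matching recursions for the Goeritz signature. Simultaneously one needs an upper bound on $O(K)$: because $v_3\in\tfrac14\Z\setminus\{0\}$ gives $|4v_3|\ge 1$, the crude bound $O(K)\le |7a_2(K)^2-a_2(K)-10a_4(K)|$ is available but far too lossy, so I would instead aim at a genuine lower bound for $|v_3(K)|$, or at an Alexander--Conway relation forcing $O(K)$ to grow only polynomially while $D(K)$ grows faster, for diagrams that are not of $(2,p)$-torus shape.

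The hard part will be precisely this control of the denominator $v_3(K)$ from below. Since $v_3$ is only a degree-$3$ finite-type invariant whereas the numerator is quadratic in the degree-$2$ invariant $a_2$, there is no a priori reason for $O(K)$ to remain small, and the alternating knots ``closest'' to $(2,p)$-torus knots---long twist regions, two-bridge knots with unbalanced continued-fraction expansions---are exactly where $v_3$ can be small while $a_2$ is large. These are the cases responsible for the residual entries of Table~\ref{table:result2}, and I expect that finishing them will require either a new obstruction sensitive to the sign of $v_3$ relative to $\sigma$, or a dedicated treatment of low-genus and two-bridge subfamilies; note that the genus-one alternating case is already settled by a separate argument in \cite{iis} rather than by (i-b). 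A realistic route is therefore an induction on genus, or on the number of Tait-graph edges, proving the gap theorem for diagrams of bounded ``torus defect'' and invoking \cite{iis} and criterion (i-c) to clear the small-genus base cases.
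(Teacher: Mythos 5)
First, a point of framing: the statement you set out to prove is Conjecture~\ref{conj:alternating}, which the paper does \emph{not} prove. It is posed as an open problem precisely because the authors' own obstruction package (Theorem~\ref{theorem:final}) is insufficient to settle it, even for knots of low crossing number. So there is no proof in the paper to compare yours against; the only fair question is whether your plan could close the gap the paper leaves open, and in its present form it cannot, because two of its load-bearing intermediate claims are contradicted by the paper's own computations.

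Concretely: (a) For the case $v_3(K)=0$ you propose to show that $v_3(K)=v_5(K)=0$ forces an alternating knot to be amphicheiral. This is false. The knots $9_{28}$ and $10_{71}$ are non-amphicheiral alternating knots with $v_3=v_5=0$; that is exactly why they survive as the alternating exceptions of $0$-type in the paper's Theorem 1.12(i), where even the additional quantum $SO(3)$ obstruction of Theorem~\ref{theorem:quantum-SO3-obstruction} fails to eliminate them. (b) Your ``gap theorem'' asserts that for alternating knots which are not $(2,p)$-torus knots, the quantity $\tfrac12\left(|\det(K)|-2|\tau(K)|-1\right)$ eventually dominates $O(K)$, so that criterion (i-b) applies. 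Table~\ref{table:result} and the table of exceptions show this is false as stated: $6_2$, $7_3$, $7_5$, $8_2$, $8_4$, $8_5$, $8_6$, $8_{11}$, and many $9$- and $10$-crossing alternating knots are non-torus, have $v_3\neq 0$, and fail (i-b) together with every other criterion of Theorem~\ref{theorem:final}. For instance, $6_2$ has $a_2=-1$, $a_4=-1$, $|4v_3|=1$, hence $O(6_2)=18$, while $\tfrac12(|\det|-2|\tau|-1)=\tfrac12(11-2-1)=4$. These failures are not sporadic boundary cases that an induction on crossing number or Tait-graph size will absorb: they account for roughly a quarter of all target knots up to $10$ crossings, and the only known way to dispose of them is Kegel's per-knot hyperbolic-geometry computation (Remark~\ref{remark:Kegel}, via \cite{fps}), a numerical, finite-check argument that cannot yield a statement about the infinite family of all alternating knots. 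You do acknowledge this difficulty in your last paragraph, which is to your credit, but that ``hard part'' is the entire content of the conjecture: what your proposal establishes is a reduction of the conjecture to claims that are either false (as in (a) and the unqualified gap theorem) or exactly as open as the conjecture itself (a lower bound on $|v_3|$ against $a_2$ for alternating knots outside the twist-region regime).
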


On the other hand, for non-alternating knots the criteria (i-c) (or, (i-a)/(i-a$'$)) is less effective. 
Thus it is desirable to find a new constraint which will be effective and useful for non-alternating cases.

\begin{remark}
After completing the paper, we noticed that Varvarezos independently \cite{va2} used the Heegaard Floer homology to give several constraints for knot to admit chirally cosmetic surgeries, some of them essentially overlaps with ours. He used an immersed curve description of the knot Floer homology \cite{hrw}.
\end{remark}

\begin{remark}
\label{remark:Kegel}
After putting the paper on arXiv, M. Kegel \cite{ke} informed us that the knots in Table \ref{table:result2}, that is, the knots for which our criteria cannot check non-existence of chirally cosmetic surgeries, indeed do not admit chirally cosmetic surgeries. Thus Conjecture \ref{conjecture:main} is correct for all the knots up to ten crossings.

He checked this by using the hyperbolic geometry argument \cite[Theorem 1.13]{fps} which we quickly review. 

Assume that $K$ is a non-amphicheiral hyperbolic knot and we denote by $E_K$ the knot exterior. As is discussed in \cite{bhw}, by Thurtson's hyperbolic Dehn surgery theorem, when both the slopes $r$ and $r'$ are sufficiently large, then both $S^{3}_K(r)$ and $S^{3}_K(r')$ are hyperbolic, and the core circles $c_r$, $c_{r'}$ of the attached solid tori are isotopic to the unique shortest geodesic. Therefore by the Mostow rigidity, if there is an orientation-preserving homeomorphism $f: S^{3}_K(r) \rightarrow -S^{3}_K(r')$, then the homeomorphism $f$ sends $c_{r}$ to $c_{r'}$. Since we are assuming that $K$ is not amphichiral, this means that the slopes $r$ and $r'$ are equivalent hence $r=r'$.

The `sufficiently large slope' condition was made more explicit in \cite[Theorem 7.29]{fps}. Let
\[ \mathcal{S}_1 = \left\{ r \: \middle| \: \frac{\ell(r)}{\sqrt{\mbox{area}\,\partial E_K}} < \max \left( 10.1, \sqrt{\frac{2\pi}{sys(E_K)}+58} \right) \right\}.\]
Here we view a slope $r$ as a geodesic on the Euclidian torus $\partial E_K$ and denote by $\ell(r)$ the Euclidian length of $r$. If $K$ admits a chirally cosmetic surgery $S^{3}_K(r) \cong -S^{3}_K(r')$ then we may assume that $r \in \mathcal{S}_1$. 

By looking at the volume (more precisely, the lower bound of the volume of Dehn-fillings \cite[Theorem 1.1]{fkp}), it follows that the other slope $r'$ is also contained in a finite set
\[ \mathcal{S}_2 = \left\{ r \: \middle| \: \ell(r) \leq 2\pi \left(1- \left(\frac{V}{vol(E_K)} \right)\right) \right\} \]
where $V=\max \{vol(S^{3}_K(r)) \: | \: r \in \mathcal{S}_1 \}$.
Therefore we have the following.

\begin{theorem}\cite[Theorem 1.13]{fps}
\label{theorem:fps-argument}
If a non-amphicheiral hyperbolic knot $K$ admits a chirally cosmetic surgery  $S^{3}_K(r) \cong -S^{3}_K(r')$, then $(r,r') \in \mathcal{S}_1 \times \mathcal{S}_2$.
\end{theorem}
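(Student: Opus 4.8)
The plan is to argue by contradiction against Mostow rigidity, exactly along the lines sketched in the preceding discussion, and then to convert the qualitative ``sufficiently large slope'' conditions into membership in the explicit sets $\mathcal{S}_1$ and $\mathcal{S}_2$ using the two cited filling estimates \cite[Theorem 7.29]{fps} and \cite[Theorem 1.1]{fkp}. Throughout I fix the orientation-preserving homeomorphism $f \colon S^3_K(r) \to -S^3_K(r')$ coming from the chirally cosmetic surgery, and I use that the relation is symmetric in $r$ and $r'$: applying $-$ to both sides gives $S^3_K(r') \cong -S^3_K(r)$, so I am free to relabel the two slopes at will.

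First I would establish that at least one of $r,r'$ lies in $\mathcal{S}_1$. Since $K$ is hyperbolic, all but finitely many fillings are hyperbolic by Thurston's hyperbolic Dehn surgery theorem, and the finitely many exceptional slopes have small (normalized) length and hence lie in $\mathcal{S}_1$; so I may assume both $S^3_K(r)$ and $S^3_K(r')$ are hyperbolic. Suppose, for contradiction, that neither $r$ nor $r'$ is in $\mathcal{S}_1$. Then \cite[Theorem 7.29]{fps} guarantees that the core geodesics $c_r$ and $c_{r'}$ of the two filling solid tori are the \emph{unique} shortest geodesics of $S^3_K(r)$ and $S^3_K(r')$ respectively, and the shortest geodesic is unchanged by reversing orientation. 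By Mostow rigidity $f$ is isotopic to an isometry, which must carry the unique shortest geodesic to the unique shortest geodesic, so $f(c_r)=c_{r'}$. Deleting open tubular neighborhoods of the cores, $f$ restricts to an orientation-preserving homeomorphism $E_K \to -E_K$, equivalently an orientation-\emph{reversing} self-homeomorphism $\phi$ of $E_K$, and $\phi$ sends the meridian of $c_r$ to the meridian of $c_{r'}$, i.e.\ $\phi(r)=r'$.

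The key step is then to contradict non-amphicheirality. By the Gordon--Luecke theorem \cite{gl} the meridian $\mu$ of $K$ is the unique slope on $\partial E_K$ whose filling yields $S^3$, so every self-homeomorphism of $E_K$ must preserve $\mu$; in particular $\phi(\mu)=\mu$. An orientation-reversing self-homeomorphism of $E_K$ fixing $\mu$ extends to an orientation-reversing homeomorphism of $S^3$ carrying $K$ to itself, which forces $K$ to be amphicheiral, contrary to hypothesis. Hence at least one slope lies in $\mathcal{S}_1$, and after relabeling I may assume $r \in \mathcal{S}_1$.

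Finally I would pin down $r'$ using volume. By definition $V=\max\{ vol(S^3_K(s)) : s\in\mathcal{S}_1\}$, so $vol(S^3_K(r))\le V$; since $f$ is a homeomorphism and volume is insensitive to orientation, $vol(S^3_K(r'))=vol(S^3_K(r))\le V$. On the other hand \cite[Theorem 1.1]{fkp} bounds $vol(S^3_K(r'))$ from below by a function of $\ell(r')$ that increases to $vol(E_K)>V$ as $\ell(r')\to\infty$. Combining the two inequalities and solving for $\ell(r')$ yields exactly the length bound defining $\mathcal{S}_2$, so $r'\in\mathcal{S}_2$, completing the proof. I expect the genuinely hard input to be the explicit geometric estimate \cite[Theorem 7.29]{fps} certifying when the core is the unique shortest geodesic, which is where the specific constants and the systole term enter; the remaining steps are formal consequences of Mostow rigidity, the Gordon--Luecke theorem, and the monotone volume estimate.
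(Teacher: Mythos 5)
Your proof is correct and takes essentially the same approach as the paper's own argument (sketched in Remark \ref{remark:Kegel}): for slopes outside $\mathcal{S}_1$ the filling cores are unique shortest geodesics by \cite[Theorem 7.29]{fps}, Mostow rigidity then turns the given homeomorphism into an orientation-reversing self-homeomorphism of $E_K$, contradicting non-amphicheirality, and the volume bound of \cite[Theorem 1.1]{fkp} then places the other slope in $\mathcal{S}_2$. Your rendering of the middle step---extending the self-homeomorphism of $E_K$ over $S^3$ via Gordon--Luecke \cite{gl} to conclude that $K$ would be amphicheiral---is in fact cleaner than the paper's phrasing, which instead concludes that the slopes would be equivalent, hence $r=r'$.
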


Since the sets $\mathcal{S}_1$ and $\mathcal{S}_2$ are finite and explicitly computable, by checking all the candidates of chirally cosmetic surgery one can check whether a given (non-amphicheiral) hyperbolic knot admit a chirally cosmetic surgery or not.

However, we remark that to use Theorem \ref{theorem:fps-argument}, we need to be a bit careful; computations of knot invariants appeared in Theorem \ref{theorem:final} is combinatorial so their computations are rigid, whereas the computations to use Theorem \ref{theorem:fps-argument} is numerical. Thus to make the argument rigorous, we require more careful treatments (see \cite{hikmot} for details). 

Thus Theorem \ref{theorem:final} has its own merits and it deserves to explore further constraints to exclude the knots in Table \ref{table:result2}, although they actually do not admit chirally cosmetic surgeries.
\end{remark}

\section*{Acknowledgement}
The first author is partially supported by JSPS KAKENHI
Grant Number 18K03287. The second author is partially supported by JSPS KAKENHI Grant Numbers 19K03490, 21H04428. The third author is partially supported by JSPS KAKENHI Grant Number 21K03244.
We would like to thank M. Kegel for valuable comments and his computations on non-existence of chirally cosmetic surgeries based on hyperbolic geometry computations.

\section{Constraint for chirally cosmetic surgery from quantum $SO(3)$-invariants}

\subsection{Quantum $SO(3)$-invariant}

First of all we quickly review the definition and basic properties of the quantum 
$SO(3)$-invariant of 3-manifolds. For details, we refer to \cite{oh}.

Although the quantum $SO(3)$-invariant has been actively studied and various deeper properties are known, we only use fundamental properties which was proved in \cite{km}\footnote{In \cite{km} the quantum $SO(3)$-invariant $\tau_r^{SO(3)}$ is denoted by $\tau'_r$.}, where the quantum $SO(3)$-invariant was first defined. 

For $n \in \Z$, let $[n] = \frac{q^{\frac{n}{2}}- q^{-\frac{n}{2}}}{q^{\frac{1}{2}}- q^{-\frac{1}{2}}}$ the quantum integer.

For a framed $m$-component oriented link $L=K_1\cup \cdots \cup K_m$, let $Q^{\mathfrak{sl}_2;V_{n_1},\ldots,V_{n_m}}(L) \in \Z[q^{\frac{1}{2}}]$ be the quantum $\mathfrak{sl}_2$ invariant of $L$, where the $i$-th component $L_i$ is colored by the $n_i$-dimensional irreducible representation $V_{n_i}$ of $U_q(\mathfrak{sl}_2)$.

Let $a_i$ be the framing of the $i$-th component $K_i$ of $L$, and let $L_0$ be the same link $L$ with zero framings. Then
\begin{equation}
\label{eqn:framing} Q^{\mathfrak{sl}_2;V_{n_1},\ldots,V_{n_m}}(L) = q^{\sum_{i=1}^{m}\frac{n_i^2 -1}{4}a_{i}} Q^{\mathfrak{sl}_2;V_{n_1},\ldots,V_{n_m}}(L_0) 
\end{equation}

Let $M$ be a 3-manifold obtained by a Dehn surgery on a framed link $L$.
For odd $r$, the quantum $SO(3)$ invariant $\tau^{SO(3)}_r(M) \in \C$ is defined as follows:
\begin{equation}
\label{eqn:def-SO3} \tau_r^{SO(3)}(M) = c_{+}^{-\sigma_+}c_{-}^{-\sigma_-}\sum_{\substack{1\leq n_1,\ldots, n_m < r \\ n_i: odd}}[n_1]\cdots[n_m]Q^{\mathfrak{sl}_2;V_{n_1},\ldots,V_{n_m}}(L)|_{q=\zeta}
\end{equation}
Here
\begin{itemize}
\item $\zeta = \zeta_r= \exp(\frac{2\pi \sqrt{-1}}{r})$ is the $r$-th root of unity.
\item $\displaystyle c_{\pm} =\sum_{\substack{1\leq n < r \\ n: odd}}[n]Q^{\mathfrak{sl}_2;V_{n}}(U_{\pm})|_{q=\zeta}$, where $U_{\pm}$ is the $(\pm 1)$-framed unknot.
\item $\sigma_{+}$ (resp. $\sigma_{-}$) is the number of positive (resp. negative) eigenvalues of the linking matrix of $L$.
\end{itemize}

From the definition, one can see that the quantum $SO(3)$-invariant $\tau_r^{SO(3)}$ satisfies the following properties.
\begin{itemize}
\item $\tau_r^{SO(3)}(S^{3})=1$\\
\item $\tau_r^{SO(3)}(-M)=\overline{\tau_r^{SO(3)}(M)}$\\
\end{itemize}

\subsection{Quantum $SO(3)$ invariant for rational surgery on knots}

 In this section we give a (schematic) formula of the quantum $SO(3)$ invariant for a 3-manifold obtained by a rational surgery on a knot $K$ in $S^{3}$ (Proposition \ref{prop:SO3-linear-sum}), which is well-known for experts.

For a link $L$, let $L'$ be the link obtained by adding the meridian of one of its component $L_i$ of $L$. Then the quantum $\mathfrak{sl}_2$ invariants at the $r$-th root of unity of $L'$ (so that the added meridian is colored by $V_k$ and the component $L_i$ is colored by $V_j$) and $L$ are related as follows \cite[(3.27) Lemma]{km}.
\begin{equation}
\label{eqn:formula}
 Q^{\mathfrak{sl}_2}\left.\left(\raisebox{-3mm}{
\begin{picture}(40,30)
\put(0,-4){$V_k$}
\put(25,24){$V_j$}
\put(13,12){\oval(16,16)[l]}
\put(13,4){\line(1,0){14}}
\put(20,-6){\line(0,1){6}}
\put(20,10){\line(0,1){22}}
\put(27,12){\oval(16,16)[r]}
\end{picture} } 
\right)\right|_{q=\zeta}
= \frac{[jk]}{[j]}
 Q^{\mathfrak{sl_2}}\left.\left(\raisebox{-3mm}{
\begin{picture}(40,30)
\put(25,24){$V_j$}
\put(20,-6){\line(0,1){38}}
\end{picture} } 
\right)\right|_{q=\zeta}
\end{equation}

This allows us to write the quantum $SO(3)$-invariant of 3-manifolds obtained by a \emph{rational} surgery on a knot in $S^{3}$. For a rational number $\frac{m}{n}$, take its continued fraction expansion $[a_0,\ldots,a_{\ell}]$ such as
\[ \frac{m}{n} = a_0-\cfrac{1}{a_1-\cfrac{1}{a_2 - \cfrac{1}{\ddots -\cfrac{1}{a_\ell}}}}.\]
Then the $\frac{m}{n}$-surgery on a knot $K$ is expressed as an integral surgery on the framed link $L$ that consists of $K$ and the Hopf chain (Fig \ref{fig:rational}),

\begin{figure}[htbp]
\begin{center}

\includegraphics*[width=60mm]{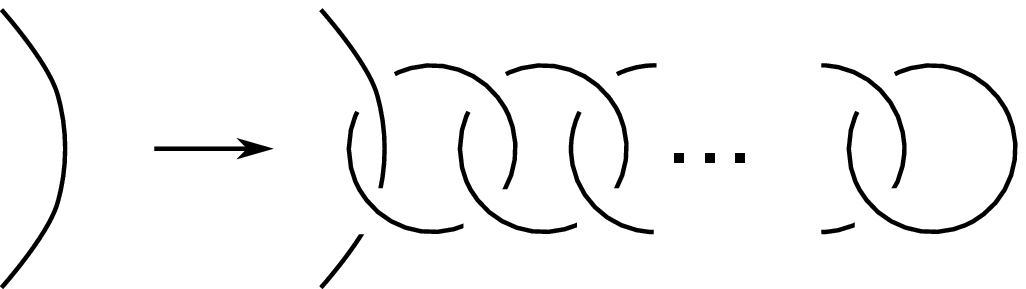}
\begin{picture}(0,0)
\put(-165,-5) {$K$}
\put(-190,45) {$\frac{m}{n}$}
\put(-115,-5) {$L$}
\put(-130,45) {$a_0$}
\put(-105,42) {$a_1$}
\put(-90,42) {$a_2$}
\put(-20,42) {$a_\ell$}
\end{picture}
\caption{Rational surgery on $K$ as integral surgery on Hopf chain $L$} 
\label{fig:rational}
\end{center}
\end{figure} 

Thus thanks to the formula (\ref{eqn:framing}) and (\ref{eqn:formula}), for the link $L$ in Figure \ref{fig:rational}, its quantum $\mathfrak{sl}_2$ invariant 
$Q^{\mathfrak{sl}_2; V_{n_0},\ldots,V_{n_{\ell}}}(L)|_{q=\zeta_r}$ is actually written by $n_{1},\ldots,n_{\ell}$, $a_1,\ldots,a_{\ell}$ and $Q^{\mathfrak{sl}_2; V_{n_0}}(K_0)|_{q=\zeta}$.

In the definition (\ref{eqn:def-SO3}) of the quantum $SO(3)$ invariant, the summation runs over odd integers $n_1,\ldots,n_{m}$ satisfying $1\leq n_i <r$.
However, it is known that when we change the color $n_m$ of the $m$-th component of the link $L$ to $(r-n_m)$, then
\[ Q^{\mathfrak{sl}_2;V_{n_1},\ldots,V_{n_{m-1}},V_{r-n_m}}(L)|_{q=\zeta_r} = (\sqrt{-1})^{(r-2n_{m})a+2\lambda} Q^{\mathfrak{sl}_2;V_{n_1},\ldots,V_{n_{m-1}},V_{n_m}}(L)|_{q=\zeta_r}\]
holds \cite[(4.20) Symmetry principle]{km}.
Here $a$ is the framing of the $m$-th component, and $\displaystyle \lambda = \sum_{n_i\textrm{ is even}}lk(L_i,L_
m)$.
In particular, for the $0$-framed knot $K_0$
\begin{equation}
\label{eqn:s-principle}
Q^{\mathfrak{sl}_2;V_{r-n}}(K_0)|_{q=\zeta_r} = Q^{\mathfrak{sl_2};V_{n}}(K_0)|_{q=\zeta_r}. 
\end{equation} 

Thus summarizing the argument so far we get the following, which seems to be well-known and appeared elsewhere, although we do not know a reference.

\begin{proposition}
\label{prop:SO3-linear-sum}
$\tau_r^{SO(3)}\left(S^{3}_K(\frac{m}{n}) \right)$ is expressed as a linear combination of the quantum invariant of the $0$-framed knot $K_0$, $\{Q^{\mathfrak{sl}_2;V_i}(K_0)|_{q=\zeta_r}\: | \: i=1,2,\ldots, \frac{r-1}{2}\}$. Namely, there are constant $c^{i}_r(\frac{m}{n})$ that only depends on the surgery coefficient $\frac{m}{n}$ such that 
\[ \tau_{r}^{SO(3)}\left(S^{3}_K(\frac{m}{n}) \right) =\sum_{1\leq i< \frac{r}{2} }  \left.c^{i}_r\left(\frac{m}{n} \right)  Q^{\mathfrak{sl}_2;V_{i}}(K_0)\right|_{q=\zeta_r} \] 
Moreover, $c^i_r\left(\frac{m}{n}\right) = \overline{c^{i}_r\left(-\frac{m}{n}\right)}$.
\end{proposition}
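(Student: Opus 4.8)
The plan is to start from the surgery description in Figure \ref{fig:rational}, substitute it into the definition (\ref{eqn:def-SO3}), and then eliminate the Hopf-chain colors one at a time using the fusion formula (\ref{eqn:formula}) and the framing formula (\ref{eqn:framing}), leaving only the colored invariant of the $0$-framed knot. Concretely, write $L = K \cup C_1 \cup \cdots \cup C_\ell$, where $K$ carries framing $a_0$, the chain components $C_1,\ldots,C_\ell$ carry framings $a_1,\ldots,a_\ell$ coming from the continued fraction expansion of $\frac{m}{n}$, and we color $K$ by $V_{n_0}$ and $C_j$ by $V_{n_j}$. By (\ref{eqn:def-SO3}),
\[
\tau_r^{SO(3)}\left(S^3_K(\tfrac{m}{n})\right) = c_+^{-\sigma_+}c_-^{-\sigma_-} \sum_{\substack{1\le n_0,\ldots,n_\ell < r\\ n_i\ \mathrm{odd}}} [n_0]\cdots[n_\ell]\, Q^{\mathfrak{sl}_2;V_{n_0},\ldots,V_{n_\ell}}(L)\big|_{q=\zeta_r}.
\]
First apply (\ref{eqn:framing}) to pull out the framing contribution $q^{\sum_i \frac{n_i^2-1}{4}a_i}$, reducing $L$ to its $0$-framed version $L_0$.

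Next I would remove the chain inductively from the tail. Since $C_\ell$ is a meridian of $C_{\ell-1}$, formula (\ref{eqn:formula}) replaces the relevant factor by $\frac{[n_{\ell-1}n_\ell]}{[n_{\ell-1}]}$ and deletes $C_\ell$; summing $[n_\ell]\,q^{\frac{n_\ell^2-1}{4}a_\ell}\frac{[n_{\ell-1}n_\ell]}{[n_{\ell-1}]}$ over $n_\ell$ yields a scalar depending only on $n_{\ell-1}$ (and $a_\ell,r$). Iterating down to $C_1$ (a meridian of $K$) collapses the whole chain, leaving $Q^{\mathfrak{sl}_2;V_{n_0}}(K_0)$ multiplied by an accumulated scalar that depends only on $n_0$ and on $a_0,\ldots,a_\ell$, hence only on $\frac{m}{n}$, and \emph{not} on $K$. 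This exhibits $\tau_r^{SO(3)}(S^3_K(\frac{m}{n}))$ as a linear combination of $Q^{\mathfrak{sl}_2;V_{n_0}}(K_0)$ with universal coefficients, where $n_0$ still ranges over odd values in $[1,r)$. To reach the stated index range $1\le i< r/2$, I would fold the sum via the symmetry principle (\ref{eqn:s-principle}): for odd $n_0 > r/2$ one has $Q^{\mathfrak{sl}_2;V_{n_0}}(K_0) = Q^{\mathfrak{sl}_2;V_{r-n_0}}(K_0)$ with $r-n_0 < r/2$, so the $(r-1)/2$ odd terms repackage into a sum over all $i$ with $1\le i<r/2$, absorbing the transition factors into the coefficients $c^i_r(\frac{m}{n})$. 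This proves the first assertion.

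For the conjugation identity $c^i_r(\frac{m}{n}) = \overline{c^i_r(-\frac{m}{n})}$, the clean route is a mirror-image argument. Mirroring $S^3$ sends $K$ to $\overline K$ and the slope $\frac{m}{n}$ to $-\frac{m}{n}$, so $-S^3_K(\frac{m}{n}) \cong S^3_{\overline K}(-\frac{m}{n})$; combining $\tau_r^{SO(3)}(-M)=\overline{\tau_r^{SO(3)}(M)}$ with the formula just established, applied to both manifolds, gives
\[
\sum_{1\le i<r/2} c^i_r(-\tfrac{m}{n})\,Q^{\mathfrak{sl}_2;V_i}(\overline K_0)\big|_{q=\zeta_r} = \sum_{1\le i<r/2}\overline{c^i_r(\tfrac{m}{n})}\,\overline{Q^{\mathfrak{sl}_2;V_i}(K_0)\big|_{q=\zeta_r}}.
\]
Since $Q\in\Z[q^{1/2}]$ has real coefficients and mirroring corresponds to $q\mapsto q^{-1}$, we have $\overline{Q^{\mathfrak{sl}_2;V_i}(K_0)|_{q=\zeta_r}} = Q^{\mathfrak{sl}_2;V_i}(K_0)|_{q=\zeta_r^{-1}} = Q^{\mathfrak{sl}_2;V_i}(\overline K_0)|_{q=\zeta_r}$, so both sides are expansions in the same basis vectors, and comparing the (universal) coefficients gives $c^i_r(-\frac{m}{n}) = \overline{c^i_r(\frac{m}{n})}$.

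The existence part is essentially bookkeeping, so I expect the main obstacle to lie in two places. First, in the conjugation identity, the coefficient-matching step needs care, because a priori two universal linear combinations could agree on every $K$; this is resolved by the fact that the $c^i_r$ arise from one and the same explicit reduction, so the comparison is between explicit expressions rather than merely values. Second, as an independent check (which also bypasses the matching subtlety entirely), one can verify the identity directly: negating $\frac{m}{n}$ negates every continued-fraction entry $a_i$, hence conjugates each framing factor $q^{\frac{n_i^2-1}{4}a_i}$ at the root of unity, leaves the real factors $\frac{[n_{i-1}n_i]}{[n_{i-1}]}$ unchanged — here reality of the quantum integers is the key input, since $[n]|_{q=\zeta_r}=\frac{\sin(\pi n/r)}{\sin(\pi/r)}\in\R$ — and turns $c_+^{-\sigma_+}c_-^{-\sigma_-}$ into its conjugate via $c_-=\overline{c_+}$; the only delicate point on this route is tracking the effect of framing reversal on the signature $(\sigma_+,\sigma_-)$, which the mirror argument handles automatically.
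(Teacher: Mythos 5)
Your existence argument is essentially identical to the paper's: same surgery presentation by a Hopf chain, same use of the framing formula (\ref{eqn:framing}), the same collapse of the chain via the fusion formula (\ref{eqn:formula}) (the paper does all the meridian removals in one step, producing the factor $[n_0n_1][n_1n_2]\cdots[n_{\ell-1}n_\ell][n_\ell]$, rather than summing inductively from the tail, but this is only a bookkeeping difference), and the same folding of the sum over odd colors $n_0$ into the range $1\le i<\frac{r}{2}$ via the symmetry principle (\ref{eqn:s-principle}).

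For the conjugation identity, the paper's proof is exactly what you call the ``independent check'': it takes $[-a_0,\ldots,-a_\ell]$ as the continued fraction expansion of $-\frac{m}{n}$, writes out the explicit coefficient formula for both slopes, and conjugates termwise, using that the framing factors $q^{\frac{n_i^2-1}{4}a_i}$ conjugate to $q^{\frac{n_i^2-1}{4}(-a_i)}$ at the root of unity, that the quantum integers are real, that $\overline{c_\pm}=c_\mp$, and that $\sigma_\pm(L)=\sigma_\mp(L')$ --- precisely the signature point you flagged as the delicate step. By contrast, your preferred mirror-image route has a genuine gap as presented: from
\[
\sum_{1\le i<r/2} c^i_r(-\tfrac{m}{n})\,Q^{\mathfrak{sl}_2;V_i}(\overline K_0)\big|_{q=\zeta_r} \;=\; \sum_{1\le i<r/2}\overline{c^i_r(\tfrac{m}{n})}\,Q^{\mathfrak{sl}_2;V_i}(\overline K_0)\big|_{q=\zeta_r}
\quad\text{for all knots }K,
\]
one cannot conclude equality of coefficients without knowing that the functionals $K\mapsto Q^{\mathfrak{sl}_2;V_i}(K_0)|_{q=\zeta_r}$, $1\le i<\frac{r}{2}$, are linearly independent on the set of knots, and your remark that the coefficients ``arise from one and the same explicit reduction'' does not supply this: once you resort to comparing the explicit expressions rather than their values on knots, you are simply running the direct computation, i.e., the paper's argument. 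Since you do carry out that direct computation with all the correct ingredients, your proposal is complete; just be aware that it is the termwise comparison, not the mirror symmetry of $\tau_r^{SO(3)}$, that is doing the work.
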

\begin{proof}
Let $[a_{0},\ldots,a_{\ell}]$ be a continued fraction expansion of $\frac{m}{n}$, and let $L$ be the corresponding (integer-framed) link. 

Let $\sigma_\pm(L)$ be the number of positive and negative eignvalues of the linking matrix of $L$. Then

\begin{align*}
&\tau_r^{SO(3)} \left( S^{3}_K(\frac{m}{n}) \right) = \tau_r^{SO(3)}(S^{3}_L)\\
&\quad = c_+^{-\sigma_+(L)}c_-^{-\sigma_-(L)}
\sum_{\substack{1\leq n_0,\ldots, n_\ell < r \\ n_0,\ldots, n_{\ell}: odd}}[n_0] \cdots [n_\ell]Q^{\mathfrak{sl}_2;V_{n_0},\ldots,V_{n_\ell}}(L)|_{q=\zeta_r}\\
&\quad \stackrel{(\ref{eqn:framing})}{=} c_+^{-\sigma_+(L)}c_-^{-\sigma_-(L)}
\sum_{\substack{1\leq n_0,\ldots, n_\ell < r \\ n_0,\ldots, n_{\ell}: odd}} q^{\sum_{i=0}^{\ell}\frac{n_i^2-1}{4}a_{i}}[n_0] \cdots [n_\ell]Q^{\mathfrak{sl}_2;V_{n_0},\ldots,V_{n_\ell}}(L_0)|_{q=\zeta_r} \\
&\quad \stackrel{(\ref{eqn:formula})}{=} c_+^{-\sigma_+(L)}c_-^{-\sigma_-(L)} \\
&\quad \qquad\sum_{\substack{1\leq n_0,\ldots, n_\ell < r \\ n_0,\ldots, n_{\ell}: odd}} q^{\sum_{i=0}^{\ell}\frac{n_i^2-1}{4}a_{i}} [n_0n_1][n_1n_2]\cdots[n_{\ell-1}n_{\ell}][n_{\ell}]Q^{\mathfrak{sl}_2;V_{n_0}}(K_0)|_{q=\zeta_r}.
\end{align*}

We put $n(i)=
\begin{cases}
i & (i \mbox{: odd}) \\
r-i & (i \mbox{: even})
\end{cases}$.
Using (\ref{eqn:s-principle}), we rewrite the summation over $n_0=1,3,\ldots,r$ in terms of $i=1,2,\ldots \frac{r-1}{2}$.
\begin{align*}
&\tau_r^{SO(3)} \left( S^{3}_K(\frac{m}{n})\right) \\
& = c_+^{-\sigma_+(L)}c_-^{-\sigma_-(L)}\\
& \quad
\sum_{i=1}^{\frac{r-1}{2}}\left( q^{\frac{n(i)^2-1}{4}a_0}
\sum_{\substack{1\leq n_1,\ldots, n_\ell < r \\ n_1,\ldots, n_{\ell}: odd}}q^{\sum_{i=1}^{\ell}\frac{n_i^2-1}{4}a_{i}} [n(i)n_1][n_1n_2]\cdots[n_{\ell-1}n_{\ell}][n_{\ell}]\right) Q^{\mathfrak{sl}_2;V_{i}}(K_0)|_{q=\zeta_r}
\end{align*}
So we conclude
\[ \tau_{r}^{SO(3)}(S^{3}_K(\frac{m}{n})) = \sum_{1\leq i \leq \frac{r-1}{2} } c^{i}_r(\frac{m}{n})Q^{\mathfrak{sl}_2;V_{i}}(K_0)|_{q=\zeta_r}\] 
where
\begin{equation}
\label{formula:coefficient} c_{r}^{i}(\frac{m}{n}) = c_+^{-\sigma_+(L)}c_-^{-\sigma_-(L)}q^{\frac{n(i)^2-1}{4}a_0} \sum_{\substack{1\leq n_1,\ldots, n_\ell < r \\ n_1,\ldots, n_{\ell}: odd}} q^{\sum_{i=1}^{\ell}\frac{n_i^2-1}{4}a_{i}} [n(i)n_1][n_1n_2]\cdots[n_{\ell-1}n_{\ell}][n_{\ell}].
\end{equation}

To compare $ c_{r}^{i}(\frac{m}{n})$ and $c^{i}_{r}(-\frac{m}{n})$ we take $[-a_{0},\ldots,-a_{\ell}]$ as a continued fraction expansion of $-\frac{m}{n}$. Let $L'$ be the corresponding framed link. Then by (\ref{formula:coefficient})
\[ c^{i}_{r}(-\frac{m}{n}) = c_+^{-\sigma_+(L')}c_-^{-\sigma_-(L')}q^{\frac{n(i)^2-1}{4}(-a_0)}\sum_{\substack{1\leq n_1,\ldots, n_\ell < r \\ n_1,\ldots, n_{\ell}: odd}} q^{\sum_{i=1}^{\ell}\frac{n_i^2-1}{4}(-a_{i})} [n(i)n_1][n_1n_2]\cdots[n_{\ell-1}n_{\ell}][n_{\ell}] \] 

Since $\overline{c_\pm}=c_\mp$ and $\sigma_\pm(L) = \sigma_{\mp}(L')$ we conclude

\begin{align*}
\overline{c^{i}_{r}(-\frac{m}{n})} &= c_+^{-\sigma_+(L)}c_-^{-\sigma_-(L)}q^{\frac{n(i)^2-1}{4}a_0}\sum_{\substack{1\leq n_1,\ldots, n_\ell < r \\ n_1,\ldots, n_{\ell}: odd}} q^{\sum_{i=1}^{\ell}\frac{n_i^2-1}{4}a_{i}} [n(i)n_1][n_1n_2]\cdots[n_{\ell-1}n_{\ell}][n_{\ell}]\\
& = c^{i}_{r}(\frac{m}{n}) 
\end{align*}
as desired.
\end{proof}

\subsection{Constraint for chirally cosmetic surgery of $0$-type from }

Proposition \ref{prop:SO3-linear-sum} leads to the following criterion for chirally cosmetic surgery of $0$-type.
For odd $r$, let $V_r(K) = \begin{pmatrix} Q^{\mathfrak{sl}_2;V_1}(K_0)|_{q=\zeta_r} \\  Q^{\mathfrak{sl}_2;V_2}(K_0)|_{q=\zeta_r} \\ \vdots \\ 
Q^{\mathfrak{sl}_2;V_{\frac{r-1}{2}}}(K_0)|_{q=\zeta_r}\end{pmatrix}$ and 
$v(r,\frac{m}{n})=\begin{pmatrix} c^{1}_r(\frac{m}{n}) \\ c^{2}_r(\frac{m}{n}) \\\vdots \\c^{\frac{r-1}{2}}_r(\frac{m}{n}) \end{pmatrix}$. Proposition \ref{prop:SO3-linear-sum} says that $\tau^{SO(3)}_r(S^{3}_K(\frac{m}{n}))=V_r(K) \cdot v(r,\frac{m}{n})$.

\begin{theorem}
\label{theorem:quantum-SO3-obstruction-general}
If $S^{3}_K(\frac{m}{n}) \cong -S^{3}_K(-\frac{m}{n})$, then for each odd $r$
\[ V_r(K)\cdot v(r,\tfrac{m}{n}) = \overline{V_r(K)}\cdot v(r,\tfrac{m}{n}). \]
\end{theorem}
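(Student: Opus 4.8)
The plan is to exploit the two elementary properties of $\tau_r^{SO(3)}$ recorded earlier together with Proposition \ref{prop:SO3-linear-sum}; no genuinely new computation is required, the whole argument being a bookkeeping of complex conjugation. First I would translate the topological hypothesis into an equality of invariants: since an orientation-preserving homeomorphism preserves the quantum $SO(3)$-invariant, the assumption $S^3_K(\frac{m}{n}) \cong -S^3_K(-\frac{m}{n})$ gives
\[ \tau_r^{SO(3)}\bigl(S^3_K(\tfrac{m}{n})\bigr) = \tau_r^{SO(3)}\bigl(-S^3_K(-\tfrac{m}{n})\bigr) \]
for every odd $r$.

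Next I would apply the orientation-reversal rule $\tau_r^{SO(3)}(-M)=\overline{\tau_r^{SO(3)}(M)}$ to the right-hand side, rewriting it as $\overline{\tau_r^{SO(3)}(S^3_K(-\frac{m}{n}))}$. I then expand both surgeries via Proposition \ref{prop:SO3-linear-sum}. The left-hand side becomes $V_r(K)\cdot v(r,\frac{m}{n})$, while $\tau_r^{SO(3)}(S^3_K(-\frac{m}{n}))=V_r(K)\cdot v(r,-\frac{m}{n})$. Here it is important to observe that the vector $V_r(K)$ depends only on the $0$-framed knot $K_0$ and is therefore insensitive to the sign of the slope, so the \emph{same} $V_r(K)$ appears in both expansions. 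Taking the complex conjugate of the latter and distributing it over the dot product yields $\overline{V_r(K)}\cdot\overline{v(r,-\frac{m}{n})}$.

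The crucial algebraic input is the final assertion of Proposition \ref{prop:SO3-linear-sum}, namely $c^i_r(\frac{m}{n})=\overline{c^i_r(-\frac{m}{n})}$. Read componentwise, this says precisely that $\overline{v(r,-\frac{m}{n})}=v(r,\frac{m}{n})$, so the conjugated right-hand side collapses to $\overline{V_r(K)}\cdot v(r,\frac{m}{n})$. Comparing with the left-hand side gives the desired identity
\[ V_r(K)\cdot v(r,\tfrac{m}{n}) = \overline{V_r(K)}\cdot v(r,\tfrac{m}{n}). \]

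Since every step is an invocation of a property already in hand, there is no substantive obstacle; the only point demanding care is the accounting of complex conjugation. In particular one must remember that conjugating $\tau_r^{SO(3)}(S^3_K(-\frac{m}{n}))$ conjugates \emph{both} factors in the dot product, and that the coefficient relation $c^i_r(\frac{m}{n})=\overline{c^i_r(-\frac{m}{n})}$ is exactly what absorbs the sign change of the slope so that the two sides share the common coefficient vector $v(r,\frac{m}{n})$.
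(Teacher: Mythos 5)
Your proof is correct and takes essentially the same route as the paper's: both convert the hypothesis into $\tau_r^{SO(3)}\bigl(S^3_K(\tfrac{m}{n})\bigr)=\overline{\tau_r^{SO(3)}\bigl(S^3_K(-\tfrac{m}{n})\bigr)}$ via the orientation-reversal rule, expand both sides by Proposition \ref{prop:SO3-linear-sum}, and use the coefficient relation $c^i_r(\tfrac{m}{n})=\overline{c^i_r(-\tfrac{m}{n})}$ to identify $\overline{v(r,-\tfrac{m}{n})}$ with $v(r,\tfrac{m}{n})$. The points you flag as needing care (conjugating both factors of the dot product, and the slope-independence of $V_r(K)$) are exactly the steps carried out in the paper's argument.
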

\begin{proof}

Since we have seen that $\overline{c^{i}_r(-\frac{m}{n})}= c^{i}_r(\frac{m}{n})$,  $v(r,\frac{m}{n}) = \overline{v(r,-\frac{m}{n})}$. Thus if $S^{3}_K(\frac{m}{n}) \cong -S^{3}_K(-\frac{m}{n})$ 
\begin{align*}
V_r(K) \cdot v(r,\tfrac{m}{n}) & = \tau_r^{SO(3)}\left(S^{3}_K(\frac{m}{n}) \right) \\
& = \tau_r^{SO(3)}\left(-S^{3}_K(-\frac{m}{n})\right)\\
& = \overline{V_r(K) \cdot v(r,-\tfrac{m}{n})} \\
&=  \overline{V_r(K)} \cdot v(r,\tfrac{m}{n})
\end{align*}
\end{proof}

Theorem \ref{theorem:quantum-SO3-obstruction} is a special case of Theorem \ref{theorem:quantum-SO3-obstruction-general} for $r=5$. Recall that $Q^{\mathfrak{sl}_2;V_1}(K_0)=1$ and $Q^{\mathfrak{sl}_2;V_2}(K_0)=[2]V_K(t)|_{t=q^{-1}}$, where $V_K(t)$ is the Jones polynomial of $K$.

\begin{proof}[Proof of Theorem \ref{theorem:quantum-SO3-obstruction}]
By Theorem \ref{theorem:quantum-SO3-obstruction-general}, if $S^{3}_K(\frac{m}{n}) \cong -S^{3}_K(-\frac{m}{n})$, then $V_K(\zeta_5)c^{2}_5(\frac{m}{n}) = \overline{V_K(\zeta_5)}c^{2}_5(\frac{m}{n})$. 
If $c^{2}_5(\frac{m}{n}) =0$, then $\tau^{SO(3)}_5(S^{3}_K(\frac{m}{n}))$ does not depend on $K$ so  
\[ \tau^{SO(3)}_5\left(S^{3}_K(\frac{m}{n}) \right) = \tau^{SO(3)}_5\left(S^{3}_{\sf Unknot}(\frac{m}{n})\right) = \tau^{SO(3)}_5(L(m,n)).\]
Thus when  $\tau^{SO(3)}_5(S^{3}_K(\frac{m}{n})) \neq \tau^{SO(3)}_5(L(m,n))$, then $c^{2}_5(\frac{m}{n}) \neq 0$. Hence $V_{K}(\zeta_5) \in \R$.
\end{proof}

\section{Rank of the Heegaard Floer homology}
\label{section:HF}
We review the rational surgery formula of $\textrm{rank}\,\widehat{HF}$, which is a consequence of rational surgery formula of the knot floer homology.

Throughout the section, by taking a mirror image if necessary, we always assume that $\nu(K) \geq \nu(\overline{K})$ and $\nu(K)\geq 0$ hold. 
Since $S^{3}_{\overline{K}}(r) \cong -S^{3}_K(-r)$, this does not affect non-existence of chirally cosmetic surgeries.

\subsection{Surgery formula of $\textrm{rank}\,\widehat{HF}$}

We quickly review the Heegaard Floer homology theory which is needed to describe the rational surgery formula of $\textrm{rank}\,\widehat{HF}$. We follow a formal point of view as discussed in \cite{hw}, and we work on the coefficient $\mathbb{F}=\Z\slash 2\Z$.

In the following, we often ignore various aspects or structures of knot floer homologies which are irrelevant in our applications. 

For a knot $K$, a theory of Heegaard Floer homology provides a graded, bi-filtered chain complex called the \emph{(infinity) knot Floer chain complex} $CFK^{\infty}(S^{3},K)$, which is unique up to filtered chain homotopy equivalence.
As a vector space over $\mathbb{F}$, $CFK^{\infty}(S^{3},K)$ is generated by a basis $\mathcal{B}$. We denote by $\mathcal{F}(\mathbf{x})=(i,j)$ the filtration of an element $\mathbf{x} \in \mathcal{B}$. Moreover, $CFK^{\infty}(S^{3},K)$ has a structure of finitely generated free module over $\mathbb{F}[U,U^{-1}]$. There is a finite subset $\mathcal{G}$ of $\mathcal{B}$ such that $\mathcal{B}=\{U^d \mathbf{x} \: | \: d\in \Z, \mathbf{x} \in \mathcal{G} \}$. Multiplying $U$ decreases the filtration by $(-1,-1)$, namely, $\mathcal{F}(U\mathbf{x}) = \mathcal{F}(\mathbf{x})+(-1,-1)$.

For $s\in \Z$, let 
\[ \widehat{A}_{s} = C\{\max\{i,j-s\}=0\} \]
be the complex generated by $\{\mathbf{x} \in \mathcal{B} \: | \: \mathcal{F}(\mathbf{x}) \in \{(i,j) \: | \: \max\{i,j-s\}=0\}\}$.
Similarly, let
\[ \widehat{B}=C\{i=0\}.\]
be the complex generated by $\{\mathbf{x} \in \mathcal{B} \: | \: \mathcal{F}(\mathbf{x}) \in \{\{(i,j) \: | \: i=0\}\}$. It is known that $\widehat{B}$ is chain homotopic to the Heegaard Floer chain complex of $S^{3}$, so in particular, $H_*(\widehat{B})\cong \widehat{HF}(S^{3})\cong \mathbb{F}$.
Let 
\[ \widehat{v}_k: \widehat{A}_{s} \rightarrow \widehat{B}\]
be the projection map. The nu-invariant is defined by 
\[ \nu(K)= \min\{s \in \Z\: | \: (\widehat{v}_k)_*:H_*(\widehat{A}_s) \rightarrow H_*(\widehat{B})(\cong \mathbb{F}) \mbox{ is non-trivial} \}.\]  
Finally, let
\[ C_K=\sum_{s \in \Z} \textrm{rank}(H_*(\widehat{A}_s)-1).\]
It is known that $C_K$ is always an even non-negative integer.

Using these quantities, (and under the setting that $\nu(K) \geq \nu(\overline{K})$ and $\nu(K) \geq 0$), the rational surgery formula of $\widehat{HF}$ is given as follows\footnote{In \cite{os2} the slope (rational number) $\frac{m}{n}$ is expressed so that $n>0$. Here we rewrite the formula in terms of our convention that $\frac{m}{n}$ is expressed so that $m>0$.}.

\begin{proposition}\cite[Proposition 9.6]{os2}
\label{proposition:HFrank}
\[ \textrm{rank}\,\widehat{HF}(S^{3}_K(\frac{m}{n})) =
\begin{cases} \displaystyle
m + nC_K& (\frac{m}{n} \geq 2\nu(K)-1))\\
-m + (4\nu(K)-2)n + nC_K & (0 \leq \frac{m}{n} \leq 2\nu(K)-1)\\
m - (4\nu(K)-2)n - nC_K & (n<0, \nu(K)>0)\\
m - nC_K & (n<0, \nu(K)=0)
\end{cases}
\]
\end{proposition}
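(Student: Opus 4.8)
The statement is a rank computation extracted from the Ozsv\'ath--Szab\'o rational surgery (mapping cone) formula, so the plan is to recall that formula and carry out the resulting bookkeeping. Writing $q=|n|$, I would first recall that $\widehat{HF}(S^{3}_K(\tfrac{m}{n}))$ is the homology of a mapping cone
\[ \mathbb{X}\left(\tfrac{m}{n}\right)=\mathrm{Cone}\Bigl(\,\bigoplus_{i\in\Z}\widehat{A}_{\lfloor i/q\rfloor}\xrightarrow{\ \widehat{D}\ }\bigoplus_{i\in\Z}\widehat{B}\,\Bigr), \]
where, on the $i$-th summand, $\widehat{D}$ is the sum of the vertical projection $\widehat{v}_{\lfloor i/q\rfloor}$ into the $i$-th copy of $\widehat{B}$ and the horizontal projection $\widehat{h}_{\lfloor i/q\rfloor}$ into the $(i+m)$-th copy when $n>0$ (and the $(i-m)$-th copy when $n<0$).

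Passing to homology, I would use $H_*(\widehat{B})\cong\mathbb{F}$ together with the splitting of $H_*(\widehat{A}_s)$ into a rank-one ``tower'', on which $(\widehat{v}_s)_*$ and $(\widehat{h}_s)_*$ may be nonzero, and a ``reduced'' part $R_s$ of rank $a_s:=\mathrm{rank}\,H_*(\widehat{A}_s)-1$ lying in the kernel of both projections. Since $\lfloor i/q\rfloor=s$ for exactly $q$ values of $i$, and since the $R_s$ lie in $\ker\widehat{D}_*$, they survive in $H_*(\mathbb{X})$ and contribute $q\sum_s a_s=|n|\,C_K$. This accounts for the $\pm nC_K$ term in every case.

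For the remaining ``tower'' contribution I would replace each $\widehat{A}_s$ by its tower $\cong\mathbb{F}$, obtaining a complex of copies of $\mathbb{F}$ joined by the maps $\widehat{v},\widehat{h}$, each of which is now either zero or an isomorphism. The thresholds are the crux: $(\widehat{v}_s)_*$ is an isomorphism exactly for $s\geq\nu(K)$ and $(\widehat{h}_s)_*$ exactly for $s\leq-\nu(K)$, the latter coming from the $(i,j)\mapsto(j,i)$ symmetry of $CFK^{\infty}$. Where $\lfloor i/q\rfloor$ is large the vertical map is an isomorphism and where it is very negative the horizontal map is, so I can cancel the corresponding acyclic sub- and quotient-complexes and reduce to a finite cone, whose homology rank is then $(\mathrm{rank}\,H_*(\bigoplus\widehat{A})-\mathrm{rank}\,\widehat{D}_*)+(\mathrm{rank}\,H_*(\bigoplus\widehat{B})-\mathrm{rank}\,\widehat{D}_*)$ by the mapping cone long exact sequence. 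For the unknot this count is $\mathrm{rank}\,\widehat{HF}(L(m,n))=m$, so it suffices to measure how moving the vertical threshold from $0$ to $\nu(K)$ changes the count.

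The main obstacle is exactly this tower count and its splitting into cases. One must track, as a function of $m$, $n$ and $\nu(K)$, how the indices $s$ with $-\nu(K)<s<\nu(K)$, at which neither projection is an isomorphism, are distributed among the summands by the floor function $\lfloor i/q\rfloor$, and hence how many summands fail to cancel in the truncated tower cone. For $n>0$ the boundary between ``every index is covered'' (giving $m+nC_K$) and ``uncancelled generators appear'' (giving $-m+(4\nu(K)-2)n+nC_K$) falls precisely at $\tfrac{m}{n}=2\nu(K)-1$, and one checks that the two expressions agree there. For $n<0$ the surgery coefficient is negative, so the truncation reverses the roles of $\widehat{v}$ and $\widehat{h}$ and the excess becomes independent of the slope, giving $m-(4\nu(K)-2)n-nC_K$ when $\nu(K)>0$; the degenerate case $\nu(K)=0$, where the gap $-\nu(K)<s<\nu(K)$ is empty and no index is uncovered, collapses this to $m-nC_K$. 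I expect to carry out this final counting by direct comparison with the unknot mapping cone, accounting only for the net effect of the shifted threshold.
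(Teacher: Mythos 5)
A preliminary remark: the paper contains no proof of this proposition. It is quoted directly from \cite[Proposition 9.6]{os2}, the only original content being a translation of conventions (there slopes are written with $n>0$, here with $m>0$, under the standing assumption $\nu(K)\geq\nu(\overline{K})$ and $\nu(K)\geq 0$). So your proposal must be measured against the argument in \cite{os2}, and at the level of strategy it is the same one: the rational surgery mapping cone, the splitting of each $H_*(\widehat{A}_s)$ into a tower plus a reduced part, truncation to a finite complex, and a count of $\ker\oplus\mathrm{coker}$ of the induced map on homology, calibrated against the unknot. That is the right route.

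There is, however, a genuine gap, and it sits exactly at the fourth case of the formula. Your splitting $H_*(\widehat{A}_s)=\mathbb{F}\oplus R_s$, with $R_s$ of rank $a_s$ annihilated by both $(\widehat{v}_s)_*$ and $(\widehat{h}_s)_*$, is automatic only when at most one of the two maps is nonzero on $H_*(\widehat{A}_s)$ (take $R_s$ to be the kernel of the nonzero one). By your own thresholds this covers every $s$ whenever $\nu(K)>0$; but when $\nu(K)=0$ both $(\widehat{v}_0)_*$ and $(\widehat{h}_0)_*$ are nonzero on $H_*(\widehat{A}_0)$, and your claim then requires $\ker(\widehat{v}_0)_*=\ker(\widehat{h}_0)_*$, i.e.\ (over $\mathbb{F}=\Z/2\Z$, since a surjection onto $\mathbb{F}$ is determined by its kernel) that $(\widehat{v}_0)_*=(\widehat{h}_0)_*$. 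This is not formal: the $(i,j)\mapsto(j,i)$ symmetry only exhibits $(\widehat{h}_0)_*$ as $(\widehat{v}_0)_*$ precomposed with an involution of $H_*(\widehat{A}_0)$, which yields nontriviality but not equality of kernels. The discrepancy is not harmless: if the two kernels were distinct, the image of a copy of $H_*(\widehat{A}_0)$ in its two target $\widehat{B}$-summands would have rank $2$ rather than $1$, and in the case $n<0$, $\nu(K)=0$ (where, unlike the positive-slope case, no other $\widehat{A}$-summand maps to those two $\widehat{B}$-copies) the count drops; for example with $n=-1$ one gets $m+C_K-2$ instead of the asserted $m+C_K$. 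So precisely the case that line four of the proposition addresses would come out wrong, and you need to state and prove this equal-kernel lemma (or handle $\nu(K)=0$ by a separate argument, as is effectively done in \cite{os2}). A secondary, lesser gap: the central count for $0\leq \frac{m}{n}\leq 2\nu(K)-1$, which is what produces $-m+(4\nu(K)-2)n+nC_K$, is only announced as a plan; agreement of the first two expressions at the boundary slope $2\nu(K)-1$ is a consistency check, not a derivation.
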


\subsection{Computation of $C_K$}
\label{section:C_K}

In our purpose, to apply the formula of $\textrm{rank}\,\widehat{HF}$ it is useful to estimate or compute $C_K$.
To begin with, we observe that $C_K\geq 2$ if $K$ is non-trivial (i.e., $g(K)\neq 0$) and $g(K)\neq \nu(K)$; this follows from 
\cite[Proposition 9.7]{os2}, which asserts
\begin{equation}
\label{eqn:genus} g(K)=\max\{\nu(K),\{s \in \Z \: | \: \textrm{rank}\,H_*(\widehat{A}_{s-1})>1 \}\}.
\end{equation}

Slightly extending this observation, we get the following. 
\begin{lemma}
\label{lemma:C_K}
If $g(K)\neq 1$ and $\nu(K) \neq g(K)$, then $C_K\geq 4$.
\end{lemma}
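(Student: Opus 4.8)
The plan is to pin down at least two distinct indices $s$ for which $\operatorname{rank} H_*(\widehat{A}_s) > 1$, and then to use the parity of these ranks to argue that each such index contributes at least $2$ to the sum $C_K = \sum_{s}(\operatorname{rank} H_*(\widehat{A}_s) - 1)$.

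First I would extract one such index directly from the genus formula \eqref{eqn:genus}. Since that formula expresses $g(K)$ as the maximum of $\nu(K)$ and $\max\{s : \operatorname{rank} H_*(\widehat{A}_{s-1}) > 1\}$, in particular $g(K) \geq \nu(K)$, so the hypothesis $\nu(K) \neq g(K)$ forces $\nu(K) < g(K)$. The maximum must then be realized by the second set, giving $\operatorname{rank} H_*(\widehat{A}_{g(K)-1}) > 1$. Because each $H_*(\widehat{A}_s)$ has odd rank (this is the fact underlying the stated even-ness of $C_K$; e.g.\ $H_*(\widehat{A}_s) \cong \widehat{HF}(S^3_K(N),[s])$ for $N \gg 0$, whose Euler characteristic is $\pm 1$), the inequality $>1$ upgrades to $\operatorname{rank} H_*(\widehat{A}_{g(K)-1}) \geq 3$, contributing at least $2$ to $C_K$.

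Next I would produce a second index using the conjugation symmetry $H_*(\widehat{A}_s) \cong H_*(\widehat{A}_{-s})$ of the knot Floer complex. Applying it at $s = g(K)-1$ gives $\operatorname{rank} H_*(\widehat{A}_{1-g(K)}) = \operatorname{rank} H_*(\widehat{A}_{g(K)-1}) \geq 3$, which contributes a further $2$. The hypotheses guarantee these two indices are genuinely distinct: since $\nu(K) \neq g(K)$ rules out the unknot we have $g(K) \geq 1$, and $g(K) \neq 1$ then yields $g(K)-1 \neq 0$, so $g(K)-1 \neq 1-g(K)$. Summing the two contributions gives $C_K \geq 2 + 2 = 4$.

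The argument is short, so care is needed in only two places. First, the reduction via \eqref{eqn:genus} depends on reading its right-hand side as a maximum over the displayed index set and on the inequality $\nu(K) \leq g(K)$, which is itself immediate from that formula. Second, the step turning ``rank $>1$'' into ``rank $\geq 3$'' is exactly where oddness of the ranks is essential: without it, two distinct indices would only guarantee $C_K \geq 2$ rather than $4$. Thus the crux is the interplay of the symmetry (to supply a second index) with the odd-rank property (to double each index's contribution), together with the role of the hypothesis $g(K) \neq 1$ in keeping the two indices apart.
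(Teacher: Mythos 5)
Your proof is correct and rests on exactly the same ingredients as the paper's: the genus formula \eqref{eqn:genus}, the odd-rank property coming from $\chi(\widehat{A}_s)=1$, and the conjugation symmetry $\textrm{rank}\,H_*(\widehat{A}_s)=\textrm{rank}\,H_*(\widehat{A}_{-s})$. The paper runs the same argument in the contrapositive direction (if $C_K=2$, parity and symmetry force the unique rank-$3$ index to be $s=0$, whence $g(K)=1$ or $\nu(K)=g(K)$ by \eqref{eqn:genus}), whereas you argue directly by exhibiting the two distinct indices $g(K)-1$ and $1-g(K)$, each contributing at least $2$.
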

\begin{proof}
The complex $\widehat{A}_s$ has the following properties \cite{os2}.
\begin{itemize}
\item $\chi(\widehat{A}_s)=1$
\item $\textrm{rank}H_*(\widehat{A}_s) =\textrm{rank}H_*(\widehat{A}_{-s})$
\end{itemize}
So when $C_K=2$ then $\textrm{rank}\,H_{*}(\widehat{A}_s)=\begin{cases} 3 & (s=0) \\ 1 & (otherwise) \end{cases}$. Thus by (\ref{eqn:genus}) either $g(K)=1$ or $\nu(K)\neq g(K)$.
\end{proof}

Let $\Delta_{K}(t)=d_0+\sum_{i=1}^{k}d_i(t^{-i}+t^{i})$ be the Alexander polynomial, normalized so that $\Delta_K(t)=\Delta_K(t^{-1})$ and $\Delta(1)=1$.
A knot $K$ is \emph{(Floer) homologically thin}, if there is a constant $s$ such that the knot Floer homology $\widehat{HFK}_{i}(K,j)$ is non-trivial only if $j-i=s$. Alternating knots \cite{os1}, or more generally, quasi-alternating knots \cite{mo} are typical classes of homologically thin knots.  

If $K$ is homologically thin, then the knot Floer chain complex $CFK^{\infty}(S^{3},K)$ is determined by $\tau(K)$ and $\Delta_K(t)$ \cite{os1,pe}. This allows us to deduce a formula of $C_K$ in the rank formula.

\begin{lemma}
\label{lemma:thin-C_K}
Assume that $K$ is homologically thin. Then
\[ C_K= \frac{1}{2}(|\det(K)|-2|\tau(K)|-1)\]
\end{lemma}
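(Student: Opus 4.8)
The plan is to compute $C_K$ directly from the explicit description of $CFK^\infty(S^3,K)$ for homologically thin knots. First I would recall the structural result of Ozsv\'ath--Szab\'o and Petkova \cite{os1,pe} that, for a homologically thin knot, the filtered chain homotopy type of $CFK^\infty(S^3,K)$ is determined by $\tau(K)$ together with $\Delta_K(t)$. More precisely, the complex decomposes (up to the relevant equivalence) into a single ``staircase'' piece of length determined by $\tau(K)$, which carries the homology of $S^3$ and contributes the surviving generator, together with a collection of ``box'' summands (square pieces), whose number is governed by the Alexander polynomial coefficients. The total rank of $\widehat{HFK}$ is $\det(K)=|\Delta_K(-1)|$, so the box summands must account for the discrepancy between $\det(K)$ and the $2|\tau(K)|+1$ generators coming from the staircase.

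Next I would turn this structural picture into a computation of $\sum_s \operatorname{rank} H_*(\widehat A_s)$. The key observation is how each summand of $CFK^\infty$ contributes to $\operatorname{rank} H_*(\widehat A_s)$. The staircase summand is arranged so that $\widehat A_s$ has homology of rank exactly $1$ for every $s$ (it behaves like the complex of $S^3$ in each large-surgery piece), contributing nothing to $C_K=\sum_s(\operatorname{rank} H_*(\widehat A_s)-1)$. Each box summand, on the other hand, is a square with a single homological ``slot'' and contributes an extra rank of $2$ to exactly one value (or a bounded range) of $\widehat A_s$. Summing over all boxes, $C_K$ equals twice the number of boxes.

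I would then count the boxes in terms of $\det(K)$ and $\tau(K)$. Since the full $\widehat{HFK}$ has total rank $|\det(K)|$, the staircase uses $2|\tau(K)|+1$ of these generators, and the remaining $|\det(K)|-(2|\tau(K)|+1)$ generators are partitioned among the boxes, each box contributing $4$ generators to $\widehat{HFK}$ but $2$ to $C_K$. This bookkeeping gives $C_K = \tfrac{1}{2}\big(|\det(K)|-2|\tau(K)|-1\big)$ directly. The relation between ``$4$ generators per box in $\widehat{HFK}$'' and ``$2$ extra in $H_*(\widehat A_s)$'' is precisely the factor $\tfrac12$ appearing in the claimed formula.

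The main obstacle I expect is pinning down the precise combinatorics of how a box summand contributes to $\operatorname{rank} H_*(\widehat A_s)$ across the relevant $s$-values, and verifying that no cancellation between the staircase and box contributions occurs (so that the ranks genuinely add). One must be careful that $\widehat A_s = C\{\max\{i,j-s\}=0\}$ restricted to a box is a genuinely acyclic-plus-one-slot complex and contributes exactly $2$ to the rank for one value of $s$ rather than being spread out or partly killed by the differential; this is where the homological-thinness hypothesis (forcing all generators to lie on a single diagonal, so the staircase and boxes occupy predictable filtration levels) does the essential work. A clean way to finish is to invoke that $C_K$ is determined by the chain homotopy type of $CFK^\infty$ and then reduce to a direct-sum computation over the staircase and box model, where the per-summand contribution is a routine finite check.
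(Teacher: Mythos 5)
Your proposal is correct and follows essentially the same route as the paper: both rest on the Ozsv\'ath--Szab\'o/Petkova structure theorem that $CFK^{\infty}(S^3,K)$ of a thin knot splits as a staircase of height $\tau(K)$ plus square (box) summands, together with the per-summand computation that the staircase contributes rank $1$ to every $H_*(\widehat{A}_s)$ while each box contributes an extra rank $2$ for exactly one value of $s$ --- your acknowledged ``routine finite check'' does work out exactly this way (the box restricted to $\widehat{A}_s$ is acyclic except at the single $s$ matching its filtration corner), and it is precisely what the paper encodes in its formula $\textrm{rank}\,H_*(\widehat{A}_s)=2\delta_{s+1}+1$. The only difference is bookkeeping: the paper pins down the number and positions of the boxes through the equations relating the multiplicities $\delta_i$ to the Alexander coefficients $|d_i|$ and then sums them, whereas you count generators globally ($|\det(K)|$ in total for a thin knot, $2|\tau(K)|+1$ in the staircase, $4$ per box against $2$ per box in $C_K$), which is a slightly tidier path to the same arithmetic.
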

\begin{proof}

Since $K$ is homologically thin, the signs of the coefficients of $\Delta_K(t)$ is alternating. Hence
\[ |\det(K)| = |\Delta_K(-1)|= 2(|d_0|+|d_1|+\cdots + |d_{g}|)-|d_0|.\]  
The knot Floer chain complex $CFK^{\infty}(S^{3},K)$ of a homologically thin knot $K$ is chain homotopy equivalent to a direct sum of the model complexes called the \emph{square} and the \emph{staircase} of height $\tau(K)$ depicted in Figure \ref{fig:model-complex}, and given as follows \cite{pe}. (Here, by the staircase of height $h$ we mean the staircase-shaped chain complex having $2h+1$ generators). See Example \ref{example:HFcomplex}.

\begin{figure}[htbp]
\begin{center}
\includegraphics*[width=60mm]{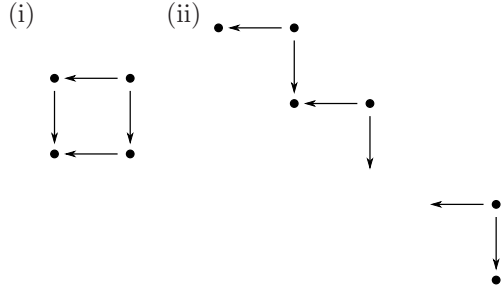}
\begin{picture}(0,0)
\put(-190,100) {(i)}
\put(-130,100) {(ii)}
\end{picture}
\caption{Model complex: (i) square, (ii) staircase} 
\label{fig:model-complex}
\end{center}
\end{figure} 
 For $i=0,1,\ldots,$ let $\varepsilon_i=\begin{cases}1 & i\leq \tau(K) \\ 0 & i>\tau(K)\end{cases}$, and let $(\delta_1,\ldots,\delta_g)$ be non-negative integers defined by the equations 
\begin{align*}
|d_g|&=\varepsilon_{g}+ \delta_{g} \\
|d_{g-1}|&=\varepsilon_{g-1}+\delta_{g-1}+2\delta_{g}\\
|d_{g-2}|&=\varepsilon_{g-2}+\delta_{g-2}+2\delta_{g-1}+\delta_{g}\\
&\vdots \\
|d_{2}| &= \varepsilon_{2}+\delta_{2}+2\delta_{3}+\delta_4 \\
|d_{1}|&= \varepsilon_{1} +\delta_{1}+2\delta_{2}+\delta_3\\
|d_{0}| &= \varepsilon_{0}+ 2\delta_{1}+2\delta_{2}
\end{align*}
(When $g=1$, the last equation is understood as $d_0=2\delta_1+\varepsilon_0$). 
$CFK^{\infty}(S^{3},K)$ is filtered chain homotopy equivalent to the complex depicted in Figure \ref{fig:complex}.

\begin{figure}[htbp]
\begin{center}
\includegraphics*[width=85mm]{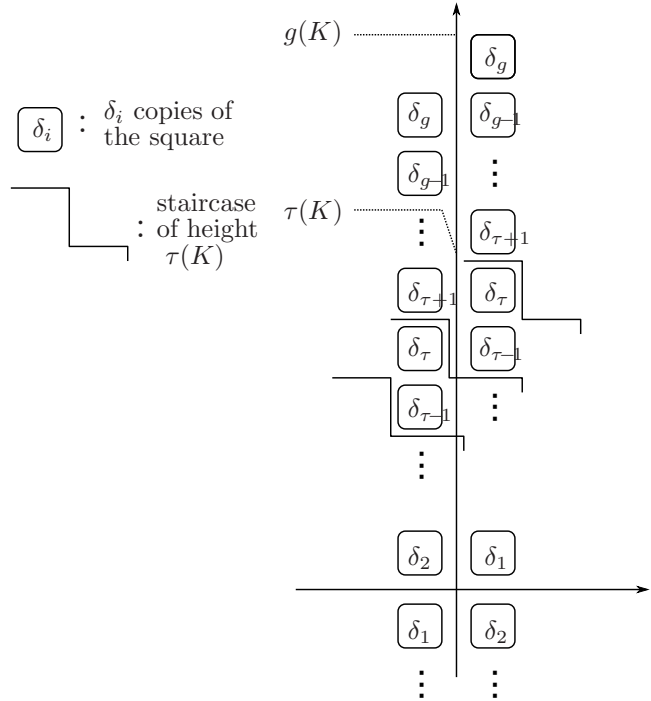}
\begin{picture}(0,0)
\put(-220,215) {\LARGE :}
\put(-237,213) {$\delta_i$}
\put(-210,220) {$\delta_i$ copies of}
\put(-210,210) {the square}
\put(-198,175) {\LARGE :}
\put(-190,185) {staircase}
\put(-190,175) {of height}
\put(-190,165) { $\tau(K)$}
\put(-142,182) {$\tau(K)$}
\put(-142,250) {$g(K)$}
\put(-66,240) {$\delta_g$}
\put(-68,219) {$\delta_{g\!-\!1}$}
\put(-95,219) {$\delta_{g}$}
\put(-95,196) {$\delta_{g\!-\!1}$}
\put(-68,174) {$\delta_{\tau+1}$}
\put(-66,151) {$\delta_{\tau}$}
\put(-95,151) {$\delta_{\tau+1}$}
\put(-95,130) {$\delta_{\tau}$}
\put(-68,130) {$\delta_{\tau\!-\!1}$}
\put(-95,107) {$\delta_{\tau\!-\!1}$}
\put(-66,51) {$\delta_{1}$}
\put(-95,51) {$\delta_{2}$}
\put(-66,23) {$\delta_{2}$}
\put(-95,23) {$\delta_{1}$}
\end{picture}
\caption{$CFK^{\infty}(S^{3},K)$ for a homologically thin knot $K$: Here we only write a complex near the filrataion $(0,*)$.} 
\label{fig:complex}
\end{center}
\end{figure} 

Thus $\textrm{rank}\, H_*(\widehat{A}_s) = 2\delta_{s+1}+1$ and
\begin{align*}
 C_K &= \sum_{s\in \Z}(\textrm{rank}H_*(\widehat{A}_s)-1)= 4(\delta_{g}+\delta_{g-1}+\cdots + \delta_{2}+\delta_1)- 2\delta_{1}\\
\end{align*}

By the defining equations of $\delta_1,\ldots, \delta_g$, we get
\begin{align*}
|d_g|+|d_{g-1}|+\cdots+|d_0| & =(\varepsilon_g+\cdots+\varepsilon_{0}) + 4(\delta_{g}+\cdots+\delta_{1})+ (\delta_2-\delta_1)  \\
& =(\tau+1)+ 4(\delta_{g}+\cdots+\delta_{1}) +(\delta_2-\delta_1)
\end{align*}
Hence
\begin{align*}
 C_K &= (|d_g|+\cdots +|d_0|) - \tau(K) -1  -(\delta_2-\delta_1) -2\delta_1\\
&= (|d_g|+\cdots +|d_0|) - \tau(K) -1  -(\delta_1+\delta_2)\\
&= \frac{1}{2}(|\det(K)|+|d_0|) -\tau(K) -1 -\frac{1}{2}(|d_0|-1)\\
&= \frac{1}{2}(|\det(K)|-2|\tau(K)|-1)
\end{align*}

\end{proof}

\begin{example}
\label{example:HFcomplex}
Let $K$ be (the mirror image of) the knot $6_2$, whose Alexander polynomial is $t^{-2}-3t^{-1}+3-3t+t^2$ and $\tau(K)=1$. Thus $\delta_2=1, \delta_1=0$, so the knot Floer chain complex $CFK^{\infty}(S^{3},K)$ is given as Figure \ref{fig:complex-example}.  The shaded region represents the complex $\widehat{A}_1$. 

\begin{figure}[htbp]
\begin{center}
\includegraphics*[width=60mm]{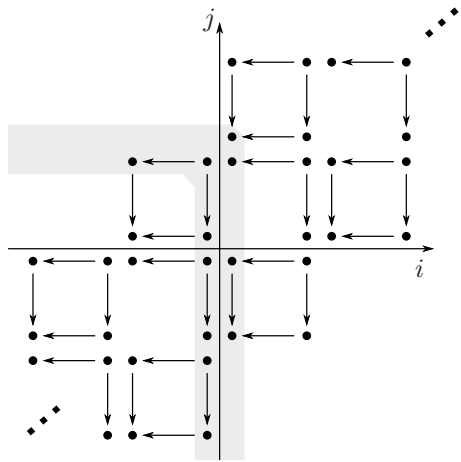}
\begin{picture}(0,0)
\put(-100,165) {$j$}
\put(-20,70) {$i$}
\end{picture}
\caption{Example: $CFK^{\infty}(S^{3},K)$ and $\widehat{A}_1$ for $K=\overline{6}_2$} 
\label{fig:complex-example}
\end{center}
\end{figure} 

\end{example}

\section{Chirally cosmetic surgery constraint}

In this section, by using the rank of the Heegaard Floer homology and constraints from the degree two finite type invariants we prove various chirally cosmetic surgery constraints in the introduction. 
In the following, by taking the mirror image if necessary, we assume that $\nu(K)\geq \nu(\overline{K})$ and $\nu(K)\geq 0$, as in Section \ref{section:HF}.

To begin with, let us observe that Proposition \ref{proposition:HFrank} immediately provides the following.

\begin{proposition}
\label{prop:rankHF-result}
Let $K$ be a non-trivial knot. If $S^{3}_K(m\slash n) \cong -S^{3}_K(m\slash n')$ ($\frac{m}{n} > \frac{m}{n'}$), then the following holds.
\begin{enumerate}
\item[(i)] $\frac{m}{n} > 0$.
\item[(ii)] If $\frac{m}{n'} >0$, then $S^{3}_K(m\slash n)$ is an L-space and $C_K=0$. In particular, $\frac{m}{n}, \frac{m}{n'}\geq 2g(K)-1$.
\item[(iii)] If $0 >\frac{m}{n'}$ and $\nu(K)=0$, then $n+n' =0$.
\item[(iv)] If $0 >\frac{m}{n'}$ and $\nu(K)>0$, then $n+n'>0$. Moreover,
\begin{itemize}
\item[(iv-a)] $\displaystyle \frac{n+n'}{m}=\frac{2}{4\nu(K)-2+C_K}$ when $\frac{m}{n} \leq 2\nu(K)-1$.
\item[(iv-b)] $\displaystyle \frac{n+n'}{m}=\frac{(4\nu(K)-2)(-n')}{mC_K}$ when $\frac{m}{n} \geq 2\nu(K)-1$ (in this case $C_K \neq 0$).
\end{itemize}
\end{enumerate}
\end{proposition}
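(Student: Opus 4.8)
The plan is to distill a single numerical identity from the hypothesis and then read off every conclusion by substituting the surgery formula of Proposition~\ref{proposition:HFrank}. The starting point is that $\textrm{rank}\,\widehat{HF}$ is insensitive to orientation reversal, so $S^{3}_K(\frac{m}{n}) \cong -S^{3}_K(\frac{m}{n'})$ forces
\[ \textrm{rank}\,\widehat{HF}\left(S^{3}_K(\tfrac{m}{n})\right) = \textrm{rank}\,\widehat{HF}\left(S^{3}_K(\tfrac{m}{n'})\right). \]
Throughout I will use two auxiliary facts. First, since $m>0$ the sign of a slope $\frac{m}{n}$ agrees with the sign of $n$, so the hypothesis $\frac{m}{n}>\frac{m}{n'}$ translates into $0<n<n'$ when both slopes are positive, into $n<n'<0$ when both are negative, and into $n>0>n'$ in the mixed case. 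Second, for a non-trivial knot $\nu(K)=0$ already forces $C_K\geq 2$: if $C_K=0$ then every $\widehat{A}_s$ has homology of rank one, so the genus formula \eqref{eqn:genus} gives $g(K)=\nu(K)=0$, a contradiction. I will also repeatedly use that $S^{3}_K(\frac{m}{n})$ is an L-space exactly when $\textrm{rank}\,\widehat{HF}=|H_1|=m$.

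For part (i) I argue by contradiction: if $\frac{m}{n}<0$ then both slopes are negative, so both ranks are computed by the $n<0$ lines of Proposition~\ref{proposition:HFrank}. Equating them yields $(-n)(4\nu(K)-2+C_K)=(-n')(4\nu(K)-2+C_K)$ when $\nu(K)>0$ and $(-n)C_K=(-n')C_K$ when $\nu(K)=0$; in either case the common factor is strictly positive (using $C_K\geq 2$ in the second case), forcing $-n=-n'$ and contradicting $n<n'$. Part (iii) is the same computation in the mixed case: with $\frac{m}{n}>0$, $\nu(K)=0$, and $n'<0$, the two ranks are $m+nC_K$ and $m+(-n')C_K$, so $(n+n')C_K=0$, and $C_K>0$ gives $n+n'=0$.

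For part (iv) I split according to where the positive slope $\frac{m}{n}$ sits relative to the threshold $2\nu(K)-1$. When $\frac{m}{n}\leq 2\nu(K)-1$ the identity becomes $(n+n')(4\nu(K)-2+C_K)=2m$, giving formula (iv-a); when $\frac{m}{n}\geq 2\nu(K)-1$ it becomes $(n+n')C_K=(4\nu(K)-2)(-n')$, giving formula (iv-b). In both cases the right-hand side is positive (for (iv-b) this simultaneously forces $C_K\neq 0$), so $n+n'>0$.

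Part (ii), where both slopes are positive, requires the most care and is the main obstacle, since there are three sub-cases according to whether each slope lies above or below $2\nu(K)-1$. The sub-case where both lie above immediately gives $nC_K=n'C_K$, hence $C_K=0$; the sub-case where both lie below is either vacuous (if $\nu(K)=0$, since then $2\nu(K)-1<0$) or gives $(4\nu(K)-2+C_K)n=(4\nu(K)-2+C_K)n'$, which contradicts $n<n'$ and so cannot occur. The delicate sub-case is the split one, where I combine the rank identity $2m=(4\nu(K)-2)n'+(n'-n)C_K$ with the threshold inequality $2m\leq(4\nu(K)-2)n'$ coming from $\frac{m}{n'}\leq 2\nu(K)-1$; together, using $n'-n>0$, they pin down $C_K=0$. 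Thus in every sub-case that occurs $C_K=0$, so $\textrm{rank}\,\widehat{HF}(S^{3}_K(\frac{m}{n}))=m$ and $S^{3}_K(\frac{m}{n})$ is an L-space, and since the two ranks agree the same holds for $S^{3}_K(\frac{m}{n'})$. Finally, $C_K=0$ forces $g(K)=\nu(K)$ via \eqref{eqn:genus}, and for a positive slope with $C_K=0$ the L-space condition $\textrm{rank}\,\widehat{HF}=m$ holds precisely when the slope is $\geq 2\nu(K)-1=2g(K)-1$, which yields the stated bounds on both slopes.
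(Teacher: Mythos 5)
Your proof is correct and takes exactly the route the paper intends: the paper states this proposition as an immediate consequence of Proposition \ref{proposition:HFrank}, namely equating the orientation-insensitive quantity $\textrm{rank}\,\widehat{HF}$ of the two surgeries and running the case analysis over the lines of the surgery formula. Your write-up supplies the details the paper leaves implicit --- in particular the auxiliary fact that a non-trivial knot with $\nu(K)=0$ has $C_K\geq 2$ (via the genus formula \eqref{eqn:genus}) and the split sub-case of (ii) where the threshold inequality pins down $C_K=0$ --- and all of these steps check out.
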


In particular, this proves the nu-invariant constraint.

\begin{proof}[Proof of Theorem \ref{theorem:nu}]
If $K$ admits a chirally cosmetic surgery of $+$-type, by Proposition \ref{prop:rankHF-result} (ii) $C_K=0$. Thus by (\ref{eqn:genus}) $\nu(K)=g(K)>0$. If $K$ admits a chirally cosmetic surgery of $-$-type, by Proposition \ref{prop:rankHF-result} (iii) $\nu(K) > 0$.
\end{proof}

We reformulate Proposition \ref{prop:rankHF-result} in the following more convenient and informative form.

\begin{corollary}
\label{cor:HFbound}
Let $K$ be a knot in $S^{3}$. Assume that $K$ admits a chirally cosmetic surgery $S^{3}_K(\frac{m}{n}) \cong -S^{3}_K(\frac{m}{n'})$. Then
\[\left|\frac{n+n'}{m}\right| <
\begin{cases}
\frac{2}{2g(K)-1}& (C_K=0) \\ 
\frac{2}{C_K} & (C_K>0) 
\end{cases} \]
\end{corollary}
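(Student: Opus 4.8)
The plan is to read the inequality straight off the case analysis in Proposition \ref{prop:rankHF-result}, bounding $\left|\frac{n+n'}{m}\right|$ separately in each of its branches. Since both the asserted bound and the quantity $\left|\frac{n+n'}{m}\right|$ are symmetric in $n$ and $n'$, I may assume $\frac{m}{n}>\frac{m}{n'}$ (the two slopes are distinct, being inequivalent), so that the hypotheses of Proposition \ref{prop:rankHF-result} apply verbatim, alongside the running normalization $\nu(K)\ge\nu(\overline{K})$ and $\nu(K)\ge 0$. By part (i) we always have $\frac{m}{n}>0$, so the argument is organized according to the sign of $\frac{m}{n'}$ and, when $\frac{m}{n'}<0$, the value of $\nu(K)$.

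First I would treat the $+$-type case $\frac{m}{n'}>0$. Here part (ii) gives $C_K=0$ together with $\frac{m}{n},\frac{m}{n'}\ge 2g(K)-1$, and $n,n'>0$ because $m>0$. Since $\frac{m}{n}>\frac{m}{n'}\ge 2g(K)-1$, at least one of these is strict, so $\frac{n}{m}<\frac{1}{2g(K)-1}$ and $\frac{n'}{m}\le\frac{1}{2g(K)-1}$; adding gives $\frac{n+n'}{m}<\frac{2}{2g(K)-1}$, which is precisely the $C_K=0$ branch of the claim. Next, suppose $\frac{m}{n'}<0$. If $\nu(K)=0$, then part (iii) forces $n+n'=0$, so $\left|\frac{n+n'}{m}\right|=0$ and the bound holds trivially; note that here $C_K\ge 2$, since $g(K)\neq\nu(K)=0$ for a nontrivial knot forces the relevant right-hand side $\frac{2}{C_K}$ to be positive. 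If instead $\nu(K)>0$, part (iv) gives $n+n'>0$ and supplies two explicit formulas. In case (iv-a), $\frac{n+n'}{m}=\frac{2}{4\nu(K)-2+C_K}$; since $4\nu(K)-2\ge 2$ this is $<\frac{2}{C_K}$ when $C_K>0$, while if $C_K=0$ one has $\nu(K)=g(K)$ and $\frac{n+n'}{m}=\frac{1}{2g(K)-1}<\frac{2}{2g(K)-1}$.

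The one step requiring more than mere substitution, and the place I expect to have to argue, is case (iv-b), where $C_K\neq 0$ and $\frac{n+n'}{m}=\frac{(4\nu(K)-2)(-n')}{mC_K}$. Comparing with $\frac{2}{C_K}$ reduces to showing $(2\nu(K)-1)(-n')<m$. To obtain this I would combine the regime hypothesis $\frac{m}{n}\ge 2\nu(K)-1$ (so $m\ge(2\nu(K)-1)n$, using $n>0$) with the strict inequality $n>-n'>0$ coming from $n+n'>0$; since $2\nu(K)-1\ge 1>0$, this chains to $m\ge(2\nu(K)-1)n>(2\nu(K)-1)(-n')$, yielding the strict bound $\frac{n+n'}{m}<\frac{2}{C_K}$. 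Collecting the four situations above then establishes the inequality in every case, completing the proof.
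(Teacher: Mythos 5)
Your proof is correct and follows essentially the same route as the paper: both read the bound off the case analysis of Proposition \ref{prop:rankHF-result}, treat the $+$-type case via part (ii) with the strictness coming from $n\neq n'$, dispose of the $\nu(K)=0$ case via part (iii), and in case (iv-b) use $m\ge(2\nu(K)-1)n$ together with $n>-n'>0$ exactly as the paper does (the paper phrases it as $-n'/n<1$). The only cosmetic difference is that the paper dismisses the $0$-type case separately as trivial, whereas in your argument it is subsumed into the $\frac{m}{n'}<0$ cases.
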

\begin{proof} 

If the surgery is of $0$-type, the assertion is trivial.
If the surgery is of $+$-type, by Corollary \ref{prop:rankHF-result} (ii) $\frac{m}{n},\frac{m}{n'} \geq 2g(K)-1$. Since $n\neq n'$, $\frac{n+n'}{m}<\frac{2}{2g(K)-1}$.

Finally we assume that the surgery is of $-$-type, so $\frac{m}{n} > 0 >\frac{m}{n'}$.
 By Proposition \ref{prop:rankHF-result}, if $\nu(K)=0$, then $\frac{n+n'}{m}=0$ so we have nothing to prove. Thus we assume that $\nu(K)>0$.

If $\frac{m}{n} \leq 2\nu(K)-1$, then $\frac{n+n'}{m} = \frac{2}{4\nu(K)-2+C_K}$.
When $C_K=0$, $g(K)=\nu(K)$ hence 
\[ \frac{n+n'}{m} =\frac{1}{2\nu(K)-1}=\frac{1}{2g(K)-1} <\frac{2}{2g(K)-1}. \]
When $C_K\neq 0$, then
\[ 0 < \frac{n+n'}{m} = \frac{2}{4\nu(K)-2+C_K} < \frac{2}{C_K}.\]

If $\frac{m}{n} \geq 2\nu(K)-1$, since $n+n'>0$, $\frac{-n'}{n}<1$. Hence
\[ \frac{n+n'}{m}=\frac{(4\nu(K)-2)(-n')}{mC_K} \leq \frac{2(-n')}{C_K n}< \frac{2}{C_K}.\]

\end{proof}

Corollary \ref{cor:HFbound} is useful; first of all, this excludes the possibility for chirally cosmetic surgery of $\pm$-type yielding integral homology spheres and homology projective space as we have mentioned in introduction.

\begin{proof}[Proof of Corollary \ref{cor:no-ZHS}]
Note that Corollary \ref{cor:HFbound} implies $|\frac{n+n'}{m}| < 1$ unless $g(K)=1$ and $C_K=0$. However $g(K)=1$ and $C_K=0$ implies that $K$ is the trefoil. Thanks to the classification of chirally cosmetic surgeries on the trefoil, $|\frac{n+n'}{m}| < 1$ also holds in this case. 
When $m=2$, $n,n'$ must be odd so $|n+n'|=1$ never happens.
\end{proof}

Moreover as we have seen and discussed in \cite{it1,iis}, various other invariants of 3-manifolds provide formulae of the form
\begin{align*}
&S^{3}_K\left(\frac{m}{n}\right) \cong -S^{3}_K\left(\frac{m}{n'}\right) \\
& \Rightarrow \frac{n+n'}{m} = \mbox{(Some specific value(s) determined by } K  (\mbox{and } m,n,n'))
\end{align*}
(see for example, Theorem \ref{theorem:deg-two-criterion} (ii)).

\begin{proof}[Proof of Theorem \ref{theorem:weak}]
First we assume that $g(K)\neq \nu(K)$. By Lemma \ref{lemma:C_K}, in this case $C_K\geq 4$, or, $g(K)=1$. 
Since $\nu(K) \leq g(K)$, when $g(K)=1$, $g(K)\neq \nu(K)$ implies that $\nu(K)=0$. Thus by Theorem \ref{theorem:nu}, in this case $K$ does not admit chirally cosmetic surgery of $\pm$-type.
If $C_K\geq 4$, by Corollary \ref{cor:HFbound} and Theorem \ref{theorem:deg-two-criterion}
\[ \left|\frac{n+n'}{m} \right|= \left|\frac{8v_3(K)}{7a_2(K)^{2}-a_2(K)-10a_4(K)} \right| < \frac{2}{C_K} \leq \frac{1}{2}\]
as desired.
When $g(K)= \nu(K)$, the assertion follows from Theorem \ref{theorem:deg-two-criterion} and Corollary \ref{cor:HFbound} as well.
\end{proof}

\begin{proof}[Proof of Theorem \ref{theorem:criteria-thin}]
By Lemma \ref{lemma:thin-C_K}, Corollary \ref{cor:HFbound}, and Theorem \ref{theorem:deg-two-criterion}, when $S^{3}_K(\frac{m}{n}) \cong -S^{3}_K(\frac{m}{n'})$,
\[ \left|\frac{n+n'}{m} \right|= \left|\frac{8v_3(K)}{7a_2(K)^{2}-a_2(K)-10a_4(K)} \right| < \frac{2}{C_K}=\frac{4}{|\det(K)|-2|\tau(K)|-1}.\]
\end{proof}

\setcounter{section}{1}
\renewcommand{\thesection}{\Alph{section}}
\section*{Appendix: Formula of $v_5$}

In this appendix we give a formula of the finite type invariant $v_5$ in terms of the Kauffman polynomial.

Throughout the appendix, we assume some familiarity of the theory of the LMO and the Kontsevich invariant. See \cite{oh}, for the basics of the LMO and the Kontsevich invariant.

In \cite{it1}, we gave a formula of the degree two part $\lambda_2$ and the degree three part $\lambda_3$ of the LMO invariant, the coefficients of the Jacobi diagram $\Thetatwodia$ and $\Thetathreedia$ for $S^{3}_K(r)$. We gave a formula $\lambda_2$ and $\lambda_3$ in terms or the surgery slope $r$ and the finite type invariants $v_2,v_3,v_4,w_4,v_5,v_6$ of $K$, which are the coefficients of the Jacobi diagrams 
\[ \Atwodia,\Btwodia, \Afourdia, \Ctwodia,\Bfourdia,\Asixdia \]
 of the (wheeled) Kontsevich invariant of $K$.

We also gave a formula of the finite type invariants $v_2,v_3,v_4,w_4,v_6$ in terms of the Conway and the Jones polynomial. However, we did not give an explicit formula of $v_5$, because $v_5$ is not determined by the Conway polynomial and the Jones polynomial, and, in the applications discussed in \cite{it1} we do not need computations of the actual value of $v_5$.

In \cite{it1} we used the $\mathfrak{sl}_2$ weight system evaluation to get a formula of $v_3,w_4$. A similar argument using $\mathfrak{so}_n$ weight system works for $v_5$, but here we give a slightly different approach.

A finite type invariant $v$ of degree $n$ is \emph{canonical} if there is a weight system $w:\mathcal{B} \rightarrow \C$ of degree $n$ (i.e. $w(D)=0$ whenever the degree of $D$ is not equal to $n$) such that $v(K)=w(Z^{\sigma}(K))$. Here $\mathcal{B}$ denotes the space of the open Jacobi diagrams and $Z^{\sigma}(K) \in \mathcal{B}$ denotes the Kontsevich invariant (taken so that it takes value in $\mathcal{B}$ by composing the inverse of the PBW isomorphism $\sigma$). In other words, $v$ is canonical if it is written as a linear combination of the coefficients of the degree $n$ part of the Kontsevich invariant. Thus by definition, $v_5$ is a canonical finite type invariant of degree $5$.

On the other hand, we can extract canonical finite type invariants from the Kauffman polynomial as follows.

\begin{lemma}
Let $F_K(a,z)$ be the Kauffman polynomial of a knot $K$ and $N \geq 1$ be a positive integer. We expand $F_K(a,z)$ as a formal power series of $h$

\begin{equation}
\label{eqn:expansion}
F_K(ie^{Nh}, -i(e^{h}-e^{-h})) = \sum_{h\geq 0}k_{n,N}(K)h^{n}
\end{equation}
where $i=\sqrt{-1}$.
Then $k_{n,N}(K)$ is a canonical finite type invariant of degree $n$.
\end{lemma}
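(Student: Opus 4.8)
The plan is to show that $k_{n,N}(K)$ is \emph{canonical} by exhibiting it as the evaluation of the Kontsevich invariant against a suitable weight system of pure degree $n$. The essential input is the well-known fact that the Kauffman polynomial, evaluated along the substitution appropriate to the $\mathfrak{so}_m$ (type $B$/$D$) quantum group, is recovered from the Kontsevich invariant via the $\mathfrak{so}_m$ weight system. Concretely, I would recall the result (due to Kauffman--Vogel / the general theory relating quantum invariants to the Kontsevich invariant, see \cite{oh}) that if one expands the Kauffman polynomial with $a = ie^{Nh}$ and $z = -i(e^h - e^{-h})$, the resulting power series in $h$ equals $W_{\mathfrak{so}}\!\bigl(Z^\sigma(K)\bigr)$ for a fixed weight system built from the defining representation, where $N$ plays the role of the parameter $m$ (the dimension) entering the $\mathfrak{so}_m$ weight system. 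The substitution $a = ie^{Nh}$ is exactly the one that makes the framing/dimension parameter appear as $N$, and the $z$-substitution is the standard change of variable linking the Kauffman skein variable to $h$.

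First I would make precise the statement that $F_K(ie^{Nh}, -i(e^h - e^{-h}))$ is a quantum invariant coming from the Kauffman (BMW) skein relation, hence by the fundamental theorem relating the Reshetikhin--Turaev $\mathfrak{so}$-invariants to the Kontsevich invariant, it equals $\bigl(W^{\mathfrak{so}}_N \circ Z^\sigma\bigr)(K)$, where $W^{\mathfrak{so}}_N$ is the weight system associated to the defining representation with dimension parameter $N$. The key structural point is that $W^{\mathfrak{so}}_N$ respects the grading: it is a sum $W^{\mathfrak{so}}_N = \sum_n W^{(n)}_N$, where $W^{(n)}_N$ vanishes on Jacobi diagrams whose degree is not $n$. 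Therefore the coefficient of $h^n$ in the expansion picks out precisely $W^{(n)}_N\!\bigl(Z^\sigma(K)\bigr)$, i.e. $W^{(n)}_N$ applied to the degree $n$ part of the Kontsevich invariant. This is exactly the definition of $k_{n,N}(K)$ being a canonical finite type invariant of degree $n$.

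The steps, in order, are: (1) identify the substitution as the $\mathfrak{so}_N$ quantum-group specialization of the Kauffman polynomial; (2) invoke the relation $F_K\bigl(ie^{Nh}, -i(e^h - e^{-h})\bigr) = W^{\mathfrak{so}}_N\!\bigl(Z^\sigma(K)\bigr)$, being careful about normalization conventions so that the group-like Kontsevich invariant matches the chosen normalization of the Kauffman polynomial; (3) use the grading-compatibility of the $\mathfrak{so}$ weight system to extract the degree $n$ part, concluding that $k_{n,N}(K) = W^{(n)}_N\!\bigl(Z^\sigma(K)\bigr)$ and hence is canonical of degree $n$. The main obstacle I expect is step (2): pinning down the exact form of the substitution and the matching of normalizations (framing corrections, the value on the unknot, and the precise way $N$ enters the dimension of the defining representation) so that the equality holds on the nose and the $h$-expansion is genuinely a power series with the claimed degree behavior. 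Once that dictionary is fixed, the grading argument in step (3) is formal, since it only uses that the $\mathfrak{so}$ weight system is built degree by degree from the $R$-matrix expansion, so the $h^n$-coefficient lands in the degree $n$ subspace of weight systems.
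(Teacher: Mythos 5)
Your proposal follows essentially the same route as the paper: the paper converts to the Dubrovnik polynomial $D_K(a,z)=(-1)^{\# K-1}F_K(ia,-iz)$, identifies $D_K(e^{Nh},e^{h}-e^{-h})$ with the quantum $(\mathfrak{so}_{4N+1},V)$-invariant for the standard representation $V$ (citing Morrison--Peters--Snyder), and then invokes the standard theorem (Kassel, Theorem XX.8.3) that the $h$-expansion coefficients of a quantum invariant are canonical finite type invariants --- exactly your steps (1)--(3). The one detail to correct in your dictionary is that the Lie algebra is $\mathfrak{so}_{4N+1}$ rather than ``$\mathfrak{so}_N$ with dimension parameter $N$,'' which is precisely the normalization issue you flagged in step (2).
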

\begin{proof}
Let $D_K(a,z)$ be the Dubrovnik polynomial, a version of the Kauffman polynomial given by 
\[ D_K(a,z)=(-1)^{\# K-1}F_K(ia,-iz). \]
Here $\#K$ denotes the number of component of $K$.
It is known that $D_K(e^{Nh}, e^{h}-e^{-h})$ is equal to the quantum $(\mathfrak{so}_{4N+1},V)$-invariant $Q^{\mathfrak{so}_{4N+1};V}(K)$ \cite{mps}, where $V$ is the standard representation of $\mathfrak{so}_{4N+1}$. Thus for each $N$, the expansion (\ref{eqn:expansion}) is nothing but an expansion of the quantum $(\mathfrak{so}_{4N+1},V)$-invariant 
\begin{align*}
Q^{\mathfrak{so}_{4N+1};V}(K) &= \sum_{h\geq 0}k_{n,N}(K)h^{n}. 
\end{align*}
By \cite[Theorem XX.8.3]{ka}, $k_{n,N}$ is a canonical finite type invariant of degree $n$.
\end{proof}

It is known that the dimension of the degree $5$ part of $\mathcal{B}$ is four.
By computing $k_{5,2},k_{5,3},k_{5,4},k_{5,5}$ for several knots, one can confirm they are linearly independent so they form a basis of canonical finite type invariants of degree $5$.
Therefore we may write 
\[ v_5=x_1k_{5,2} + x_2k_{5,3} + x_3 k_{5,4}+ x_4 k_{5,5} \]
where $x_1,x_2,x_3,x_4 \in \R$ are constants.

The coefficients $x_1,x_2,x_3,x_4$ are determined by computing
$v_5,k_{5,2},k_{5,3},k_{5,4},k_{5,5}$ for suitable knots.
Since we can compute the Kauffman polynomial, the computation of $k_{5,2},k_{5,3},k_{5,4},k_{5,5}$ are routine.

To compute $v_5$ for some specific knots, we use the reduced 2-loop polynomial $\widehat{\Theta}_K(t)$ of $K$.
The reduced 2-loop polynomial is an invariant of a knot that comes from the $\mathfrak{sl}_2$ weight system evaluation of the two-loop part of the Kontsevich invariant. For details of the 2-loop polynomial we refer to \cite{oh2,oh3}.

We express $v_5$ in terms of the reduced 2-loop polynomial $\widehat{\Theta}_K(t)$.

\begin{lemma}
\label{lemma:v_5-2-loop}
Let $L_K(x)=\frac{\widehat{\Theta}_K(e^x)}{\Delta_K(e^x)^2}$, where $\Delta_K(t)$ is the Alexander polynomial, normalized so that $\Delta_K(t)=\Delta_K(t^{-1})$, $\Delta_K(1)=1$.
Then
\[v_5(K) = \frac{1}{16}\left(\frac{1}{6}L_K(0) + L_K''(0) \right)\]
In particular, when $\Delta_K(t)=1$ then
\[v_5(K) =\frac{1}{16}\left(\frac{1}{6}\widehat{\Theta}_K(1) + \widehat{\Theta}_K''(1) \right)\]
\end{lemma}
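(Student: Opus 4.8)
The plan is to exploit the loop-degree grading of the (wheeled) Kontsevich invariant $Z^{\sigma}(K)$. Among the degree-five open Jacobi diagrams, the diagram $\Bfourdia$ dual to $v_5$ is a \emph{two-loop} diagram (a theta-graph $\Thetatwodia$ carrying four legs), so $v_5$ depends only on the two-loop part of $Z^{\sigma}(K)$. The reduced 2-loop polynomial $\widehat{\Theta}_K(t)$ is, by construction, the $\mathfrak{sl}_2$ weight system reduction of exactly this two-loop stratum. Thus the first step is to route the identification of $v_5$ with $\widehat{\Theta}_K$ through the $\mathfrak{sl}_2$ weight system: write $v_5(K)=w(Z^{\sigma}(K))$ for the weight system $w$ dual to $\Bfourdia$, and compare it against the value of the $\mathfrak{sl}_2$ weight system on the two-loop part, which is what $\widehat{\Theta}_K$ records.

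Second, I would account for the normalization by $\Delta_K(e^x)^2$. In the loop (Melvin--Morton--Rozansky) expansion of the $\mathfrak{sl}_2$-colored invariant, the one-loop (wheels) part contributes a factor of $\Delta_K(e^x)^{-1}$, and the two-loop function produced from $\widehat{\Theta}_K$ naturally occurs divided by a power of $\Delta_K(e^x)$. Dividing by $\Delta_K(e^x)^2$ is precisely the bookkeeping that removes the reducible, wheel-decorated contributions, so that the Taylor coefficients of $L_K(x)=\widehat{\Theta}_K(e^x)/\Delta_K(e^x)^2$ at $x=0$ read off connected two-loop data directly. This explains why the clean statement is phrased for $L_K$ rather than for $\widehat{\Theta}_K$ alone, and why the $\Delta_K=1$ case drops out as an immediate specialization.

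Third, I would perform the extraction by computing the $\mathfrak{sl}_2$ weight system on the two-loop diagrams of low degree, namely the bare theta graph $\Thetatwodia$, the two-leg diagram $\Btwodia$ (which recovers $v_3$), and the four-leg diagram $\Bfourdia$. Each evaluates to a polynomial in the Casimir, equivalently in $x$; matching the relevant low-order Taylor data of $L_K$ against these values yields a small linear system whose solution produces the constants $\tfrac{1}{16}$ and $\tfrac{1}{6}$. I would calibrate the conventions against the already-known degree-three formula for $v_3$ from \cite{it1} (where $v_3$ is determined by the Jones polynomial), and then confirm the final constants by evaluating both sides on a reference knot such as the trefoil, where $v_5$, $\widehat{\Theta}_K$, and $\Delta_K$ are all explicitly known.

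The main obstacle is the precise matching of degree and normalization conventions that dictates \emph{which} low-order data of $L_K$ enters. The coefficients $L_K(0)$ and $L_K''(0)$ are not a priori homogeneous finite type invariants, and the substance of the lemma is that exactly the combination $\tfrac{1}{6}L_K(0)+L_K''(0)$ isolates the primitive degree-five invariant $v_5$, with every lower-order contribution cancelling. Verifying this hinges on Ohtsuki's normalization of $\widehat{\Theta}_K$ \cite{oh2,oh3} and its compatibility with the Kontsevich-invariant conventions of \cite{it1}; it is this reconciliation of conventions, rather than any individual computation, where the real care is required.
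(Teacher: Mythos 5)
Your proposal is correct and takes essentially the same approach as the paper: the paper's proof likewise restricts to the two-loop part of the logarithm of the Kontsevich invariant, applies the reduction map $W$ underlying $\widehat{\Theta}_K$ (the symmetrized substitution of $(e^x,e^{-x},1)$ into the three edges of the theta graph, i.e.\ exactly the $\mathfrak{sl}_2$-type reduction you invoke), uses the structural fact that $W$ sends the two-loop part to $\tfrac{1}{12}(e^{x/2}-e^{-x/2})^2L_K(x)$, and calibrates conventions against Ohtsuki's formula $v_3(K)=\tfrac{1}{8}\widehat{\Theta}_K(1)$, just as you suggest. The only difference is procedural: where you would fix the constants by solving a linear system and checking on the trefoil, the paper evaluates $W$ on the single diagram $\Bfourdia$ (obtaining $\tfrac{2}{3}x^4$) and reads off $v_5=\tfrac{3}{2}c_4$ from the Taylor expansion of $\tfrac{1}{12}(e^{x/2}-e^{-x/2})^2L_K(x)$, which produces the constants $\tfrac{1}{16}$ and $\tfrac{1}{6}$ directly.
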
 
\begin{proof}
This follows from the same argument as \cite[Proposition 5.1]{oh2}, where it was proven that $v_3(K)=\frac{1}{8}\widehat{\Theta}(1)$.

Let $W$ be the map that sends a labelled 2-loop Jacobi diagram to $\Q[[x]]$, given by
\[ W\left(\raisebox{-8mm}{
\begin{picture}(60,40)
\put(30,25){\oval(60,35)}
\put(0,25){\line(1,0){60}}
\put(20,48){\footnotesize $f_1(x)$}
\put(20,29){\footnotesize $f_2(x)$}
\put(20,10){\footnotesize $f_3(x)$}
\end{picture}
}\right) = \frac{1}{6}\sum_{\{i,j,k\}=\{1,2,3\}}f_i(x)f_j(-x)f_k(0)\]
Here $f_i(x) \in \Q[[x]]$ is a formal power series of $x$, and a Jacobi diagram whose edge is labeled by a power series $f(x)= c_0+c_1x+ c_2x^{2}+c_3x^{3}+\cdots $ represents the Jacobi diagram 
\[ \raisebox{-4mm}{
\begin{picture}(20,30)
\put(2,0){\line(0,1){30}}
\put(5,22){\scriptsize $f(x)$}
\end{picture} 
}
=
c_0 \raisebox{-4mm}{
\begin{picture}(10,30)
\put(2,0){\line(0,1){30}}
\end{picture} } 
+ c_1 \raisebox{-4mm}{
\begin{picture}(10,30)
\put(2,0){\line(0,1){30}}
\put(2,15){\line(1,0){10}}
\end{picture} 
}
+ c_2\raisebox{-4mm}{
\begin{picture}(10,30)
\put(2,0){\line(0,1){30}}
\put(2,12){\line(1,0){10}}
\put(2,18){\line(1,0){10}}
\end{picture} 
}
 +c_3 \raisebox{-4mm}{
\begin{picture}(10,30)
\put(2,0){\line(0,1){30}}
\put(2,9){\line(1,0){10}}
\put(2,15){\line(1,0){10}}
\put(2,21){\line(1,0){10}}
\end{picture} 
} + \cdots.
\]

The map $W$ sends the 2-loop part of the logarithm of the Kontsevich invariant to 
\[ \frac{1}{12}(e^{\frac{x}{2}}-e^{-\frac{x}{2}})^2L_K(x) \] 
which is viewed as a formal power series by taking the Taylor expansion at $x=0$;
\[ \frac{1}{12}(e^{\frac{x}{2}}-e^{-\frac{x}{2}})^2L_K(x) = c_0 + c_1x + c_2 x^{2}+c_3x^{3}+x_4x^{4}+\cdots. \]

Since $v_5$ is the defined as the coefficient of the 2-loop Jacobi diagram $\Bfourdia$ and 
\[ W\left(\Bfourdia \right)=W\left(
\raisebox{-8mm}{
\begin{picture}(50,40)
\put(25,25){\oval(50,35)}
\put(0,25){\line(1,0){50}}
\put(20,46){\footnotesize $x^4$}
\put(20,29){\footnotesize $1$}
\put(20,10){\footnotesize $1$}
\end{picture}
}\right) = \frac{2}{3}x^{4},
\]
we conclude that 
\begin{align*}
v_5 &= \frac{3}{2}c_4 = \frac{3}{2}\left(\frac{1}{4!}\frac{d^4}{dx^4}\left.\left(\frac{1}{12}(e^{\frac{x}{2}}-e^{-\frac{x}{2}})^2L_K(x) \right)\right|_{x=0}\right) \\
&=\frac{1}{16}\left(\frac{1}{6}L_K(0) + L_K''(0) \right)
\end{align*}
\end{proof}

The reduced 2-loop polynomial is computed for the torus knot $T(p,q)$ \cite[Corollary 3.1]{oh2} and the genus one knots \cite[Corollary 3.1]{oh2}.
Using these results and Lemma \ref{lemma:v_5-2-loop}, we compute values of $v_5, k_{5,2}, k_{5,3}, k_{5,4}, k_{5,5}$ for several knots. The results are presented in Table \ref{table:degree-five-invariant}. Here $ J(\ell,m)$ denotes the double twist knot.

\begin{table}[h]
  \centering
  \begin{tabular}{|c|c|c|c|c|c|}
   \hline
   Knot & $v_5$ & $k_{5,2}$ & $k_{5,3}$ & $k_{5,4}$ & $k_{5,5}$ \\
   \hline \hline
   $T(2,3)$ & $-17/48$ &  $-176$ & $-736$  & $-1056$ & $1280$ \\
   $T(2,5)$ & $-229/48$ & $-2480$ & $-11360$ & $-21024$ & $-3840$ \\
   $J(-2,4)$ & $37/16$ & $1104$  & $4128$ & $3552$ & $-19200$ \\
   $J(2,4)$ & $43/48$ & $464$ & $2720$ & $9696$ & $26880$ \\
      \hline
  \end{tabular}
\medskip
\caption{Values of the finite type invariants $v_5, v_{5,2}, v_{5,3}, v_{5,4}$ and $v_{5,5}$ for some specific knots}
 \label{table:degree-five-invariant}
\end{table}

From this computation, we get an explicit formula of $v_5$ in terms of the Kauffman polynomial.
\begin{theorem}
\label{theorem:v5-formula}
\[ v_5(K)=\dfrac{1}{768} k_{5,2}(K) + \dfrac{1}{768} k_{5,3}(K) - \dfrac{1}{1536} k_{5,4}(K)+ \dfrac{7}{61440} k_{5,5}(K) \]
\end{theorem}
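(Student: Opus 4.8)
The plan is to exploit the structural fact established in the preceding discussion: $v_5$ is a canonical finite type invariant of degree $5$, the space of such invariants is four-dimensional, and $\{k_{5,2},k_{5,3},k_{5,4},k_{5,5}\}$ forms a basis for it. Consequently there are unique constants $x_1,x_2,x_3,x_4 \in \R$ with
\[ v_5 = x_1 k_{5,2} + x_2 k_{5,3} + x_3 k_{5,4} + x_4 k_{5,5}, \]
and the entire content of the theorem is the determination of these four numbers. Thus the proof reduces to a single linear-algebra computation, with all the genuinely analytic work (computing the $v_5$-column via Lemma \ref{lemma:v_5-2-loop} and the $k_{5,N}$-columns from the Kauffman polynomial) already packaged into Table \ref{table:degree-five-invariant}.

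First I would evaluate the displayed identity at the four knots $T(2,3)$, $T(2,5)$, $J(-2,4)$, $J(2,4)$ of Table \ref{table:degree-five-invariant}. Writing $\mathbf{x}=(x_1,x_2,x_3,x_4)^{T}$, letting $\mathbf{v}$ be the column of $v_5$-values, and letting $M$ be the $4\times 4$ integer matrix whose $j$-th row is $(k_{5,2}(K_j),k_{5,3}(K_j),k_{5,4}(K_j),k_{5,5}(K_j))$, the identity becomes the linear system $M\mathbf{x}=\mathbf{v}$. Solving this system over $\Q$ by Gaussian elimination (or Cramer's rule) should return $x_1=\tfrac{1}{768}$, $x_2=\tfrac{1}{768}$, $x_3=-\tfrac{1}{1536}$, $x_4=\tfrac{7}{61440}$, and substituting these back yields the asserted formula.

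The step that must be justified, rather than merely computed, is the invertibility of $M$: this is exactly the finite determinant computation that witnesses the linear independence of $k_{5,2},k_{5,3},k_{5,4},k_{5,5}$ noted above, and it is what guarantees the solution $\mathbf{x}$ exists and is unique. Once $\det M \neq 0$ is confirmed, no further verification is logically required, since $v_5$ is a priori known to lie in the span of the basis and the system then pins it down uniquely.

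I expect the main obstacle to be purely computational bookkeeping rather than any conceptual difficulty. The $v_5$-values carry denominators $48$ and $16$ while the $k_{5,N}$-values are large integers, so the elimination must be carried out carefully over $\Q$ in order to land on the delicate coefficient $\tfrac{7}{61440}$; a single arithmetic slip would produce plausible-looking but wrong rationals. As a safeguard I would re-evaluate the resulting formula on an independent test knot whose reduced 2-loop polynomial is available (for instance a further torus knot $T(p,q)$ or double twist knot $J(\ell,m)$) and check that the right-hand side reproduces the value of $v_5$ computed directly from Lemma \ref{lemma:v_5-2-loop}.
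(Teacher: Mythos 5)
Your proposal is correct and follows essentially the same route as the paper: the paper likewise uses that $v_5$ is canonical of degree $5$, that the degree-$5$ part of $\mathcal{B}$ is four-dimensional so the linearly independent invariants $k_{5,2},k_{5,3},k_{5,4},k_{5,5}$ form a basis, and then determines the coefficients $x_1,\ldots,x_4$ by solving the linear system given by the values in Table \ref{table:degree-five-invariant}. Your explicit remark that invertibility of the $4\times 4$ value matrix is the step witnessing linear independence, and your suggested check on an additional knot, are consistent with (and slightly more careful than) the paper's brief treatment.
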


\end{document}